\theoremstyle{plain}
\newtheorem{theo}[equation]{Theorem}
\newtheorem{pro}[equation]{Proposition}
\newtheorem{coro}[equation]{Corollary}
\newtheorem{lem}[equation]{Lemma}
\newtheorem{defi}[equation]{Definition}
\newtheorem{rem}[equation]{Remark}
\def\vn{\varepsilon}
\def\ot{\otimes}
\def\om{\omega}
\def\mg{\mathfrak{g}}
\def\q{{\mathbf{q}}}
\def\lan{\langle}
\def\ran{\rangle}
\def\al{\alpha}
\def\be{\beta}
\def\De{\Delta}
\def\ga{\gamma}
\def\Ga{\Gamma}
\def\Om{\Omega}
\def\si{\sigma}
\def\om{\omega}
\def\vep{\varepsilon}
\def\de{\delta}
\def\pa{\partial}
\def\La{\Lambda}
\def\la{\lambda}
\def\bi{\binom}
\def\F{{\mathcal F}}
\def\O{{\mathcal O}}
\def\bN{{\mathbb N}}
\def\bZ{{\mathbb Z}}
\def\bQ{{\mathbb Q}}
\def\bK{{\mathbb K}}
\def\bC{{\mathbb C}}
\def\lra{\longrightarrow}
\def\dim{\mbox{\rm dim}\,}
\def\q{{\bf{q}}}
\def\lg{\langle}
\def\rg{\rangle}
\def\pa{\partial}
\def\lra{\longrightarrow}
\def\al{\alpha}
\def\be{\beta}
\def\ga{\gamma}
\def\si{\sigma}
\def\De{\Delta}
\def\de{\delta}
\def\vn{\varepsilon}
\def\ot{\otimes}
\def\la{\lambda}
\def\La{\Lambda}
\def\om{\omega}
\def\lra{\longrightarrow}
\def\mg{\mathfrak {g}}
\def\sm1{_{(-1)}}
\def\s1{_{(1)}}
\def\s2{_{(2)}}
\def\s3{_{(3)}}
\def\s0{_{(0)}}
\def\um1{^{(-1)}}
\def\u1{^{(1)}}
\def\u2{^{(2)}}
\def\u3{^{(3)}}
\def\u0{^{(0)}}
\def\id{\text{\rm id}}
\def\id{\text{\rm id}}
\def\bi{\binom}
\def\wt{\mbox{\rm wt}}
\title[Multi-parameter quantum groups ]{Multi-parameter quantum groups \\ and quantum shuffles, (I)}
\author[Hu]{Naihong Hu$^\star$}
\address{Department of Mathematics, East China Normal University,
Minhang Campus, Dong Chuan Road 500, Shanghai 200241, PR China}
\email{nhhu@math.ecnu.edu.cn}
\thanks{$^\star$N.H.,
supported in part by the NNSFC (Grant 10728102), the PCSIRT and the
SRFDP from the MOE, the National/Shanghai Priority Academic
Discipline Programmes (Project Number: B407).}
\author[Pei]{Yufeng Pei$^\dag$}
\address{Department of Mathematics, Shanghai Normal University, Guilin Road 100,
Shanghai 200234 PR China}\email{peiyufeng@gmail.com}
\thanks{$^\dag$Y.Pei,
corresponding author, supported in part by the NNSFC (Grant
10571119) and the ZJNSF (Grant Y607136).}
\author[Rosso]{Marc Rosso}
\address{UFR de Math\'ematiques,
Universit\'e Denis Diderot - Paris 7, 2 Place Jussieu, 75251 Paris
Cedex 05, France} \email{rosso@math.jussieu.fr}
\date\today
\subjclass{Primary 17B37, 81R50; Secondary 17B35}
\begin{document}

\begin{abstract}
In this article, we study the multi-parameter quantum groups defined
by generators and relations associated with symmetrizable
generalized Cartan matrices, together with their representations in
the category $\mathcal O$. This presentation will be convenient for
our later discussions. We present two explicit descriptions here: as
a Hopf $2$-cocycle deformation, and as the multi-parameter quantum
shuffle realization of the positive part.
\end{abstract}
\maketitle

\section{Introduction}
In the early 90s last century, much work has been done on the
multi-parameter deformations of the coordinate algebra of the
general linear algebraic group. These deformations were firstly
described in \cite{AST} and independently in \cite{Re}. These
implied that multi-parameter deformations can be obtained by
twisting the coalgebra structure \cite{Re} in the spirit of Drinfeld
[9] or by twisting the algebra structure via a $2$-cocycle on a free
abelian group \cite{AST}. In fact, the original work of Drinfeld and
Reshetikhin concerned only with quasitriangular Hopf algebras, but
their constructions can be dualised to the case of
co-quasitriangular Hopf algebras by Hopf $2$-cocycle deformations
\cite{DT,Ma}.

Benkart-Witherspoon \cite{BW1, BW2} investigated a class of
two-parameter quantum groups $U(\mathfrak{gl}_n)$ and
$U(\mathfrak{sl}_n)$ of type $A$ arising from the work on down-up
algebras \cite{BW0}, which were early defined by Takeuchi \cite{Ta}.
Bergeron-Gao-Hu \cite{BGH1, BGH2} developed the corresponding theory
for two-parameter quantum orthogonal and symplectic groups, in
particular, they studied the distinguished Lusztig's symmetries
property for the two-parameter quantum groups of classical type.
Recently, this fact has been generalized to the cases of Drinfeld
doubles of bosonizations of Nichols algebras of diagonal type by
Heckenberger in \cite{He}, that is, the study of Lusztig
isomorphisms (only existed among a family of different objects) in
the multi-parameter setting finds a beautiful realization model for
his important notion of Weyl groupoid defined in \cite{He2}. It
should be pointed out that this is also a remarkable feature for the
multi-parameter quantum groups in question that are distinct from
the one-parameter ones familiar to us (see \cite{Lu}).

Hu-Shi \cite{HS}, Bai-Hu \cite{BH} did contributions to exceptional
types $G_2$, $E$, respectively; Hu-Wang \cite{HW1, HW2}, Bai-Hu and
Chen-Hu-Wang further investigated the structure theory of
two-parameter restricted quantum groups for types $B$, $G_2$, $D$
and $C$ at roots of unity, including giving the explicit
constructions of convex PBW-type Lyndon bases with detailed
information on commutation relations, determining the isomorphisms
as Hopf algebras and integrals, as well as necessary and sufficient
conditions for them to be ribbon Hopf algebras.

Another new interesting development is the work of Hu-Rosso-Zhang
and Hu-Zhang \cite{HRZ, HZ1, HZ2} achieved for affine types
$X^{(1)}_\ell$, where $X=A, B, C, D, E, F_4, G_2$. Of most
importance among them are the following: (1) Drinfeld realizations
in the two-parameter setting were worked out; (2) Axiomatic
definition for Drinfeld realizations was achieved in terms of
inventing $\tau$-invariant generating function; (3) Quantum affine
Lyndon bases were put forwarded and constructed for the first time;
(4) Constructions of two-parameter vertex representations of level
$1$ for $X^{(1)}_\ell$ were obtained.

Using the Euler form, the first two authors \cite{HP1} introduced a
unified definition for a class of two-parameter quantum groups for
all types  and studied their structure. Shortly after, this
definition was quoted in \cite{BS}. On the other hand,
(multi)two-parameter quantum groups have been deeply related to many
interesting work. For instance, Krob and Thibon \cite{KT1} on
noncommutative symmetric functions; Reineke \cite{R1} on generic
extensions and degenerate two-parameter quantum groups of
simply-laced cases, and the classifications of Artin-Shelt regular
algebras \cite{LPWZ}.

In \cite{Ro0, Ro1}, the third author found a realization of
$U_{q}^+$, the positive part of the standard quantized enveloping
algebra associated with a Cartan matrix by quantizing the shuffle
algebra (see also \cite{FG1, Gr2, Le}). It was mentioned that the
supersymmetric and multi-parameter versions of $U_{q}^+$ (for a
suitable choice of the Hopf bimodule) also can be treated in this
uniform principle. From a more recent point of view, Andruskiewich
and Schneider obtained remarkable results on the structure of
pointed Hopf algebras arising from Nichols algebras (or say, quantum
symmetric algebras as in \cite{Ro1}) and their lifting method
\cite{AS1, AS2, AS3}.

In this paper, we study a class of multi-parameter quantum groups
$U_{\q}(\mg_A)$ defined by generators and relations associated with
symmetrizable generalized Cartan matrices $A$, together with their
representations in the category $\mathcal O$. In section 2, we show
that $U_{\q}(\mg)$ can be realized as Drinfeld doubles of certain
Hopf subalgebras with respect to a Hopf skew-pairing
$\lan\,,\,\ran_{\q}$, and as a consequence, it has a natural
triangular decomposition. Partially motivated by Doi-Takeuchi
\cite{DT}, Majid \cite{Ma} and also Westreich \cite{W} on Hopf
$2$-cocycle deformation theory, we construct an explicit Hopf
$2$-cocycle on $U_{q,q^{-1}}(\mg_A)$ and use it to twist its
multiplication to get the required multi-parameter quantum group
$U_{\q}(\mg_A)$. In section 3, the representation theory of
$U_{\q}(\mg_A)$ under the assumption that $q_{ii} \; (i\in I)$ are
not roots of unity is described, which is the generalization of the
corresponding one for two-parameter quantum groups of types $A, B,
C, D$ developed in \cite{BGH2} and \cite{BW2}. We show that the Hopf
skew-pairing $\lan\,,\,\ran_{\q}$ is non-degenerate when restricted
to each grading component. In section 4, using a non-degenerate
$\tau$-sesquilinear form on $U_{\q}^+$ (where $\tau$ is an
involution automorphism of the ground field
$\bK\supset\bQ(q_{ij\;|\;i,j\in\ I})$ such that
$\tau(q_{ij})=q_{ji}, \ i,j\in I$), we prove that the positive part
$U_{\q}^+$ of $U_{\q}(\mg_A)$ can be embedded into the
multi-parameter quantum shuffle algebra $(\F,\star)$. It turns out
that this realization plays a key role both in the study of
PBW-bases of $U_{\q}(\mg_A)$ and the construction of multi-parameter
Ringel-Hall algebras (see \cite{P1} for more details).

Throughout the paper, we  denote by $\bZ,\, \bZ_+,\, \bN$, $\bC$ and
$\bQ$ the set of integers, the set of non-negative integers, the set
of positive integers, the set of complex numbers and the set of
rational numbers, respectively.

\section{Multi-parameter quantum group and Hopf $2$-cocycle deformation}
%\medskip

\subsection{}
Let us start with some notations. For $n>0$, define
$$
(n)_v=\frac{v^n-1}{v-1}.
$$
$$
(n)_v!=(n)_v\cdots(2)_v(1)_v, \quad \textit{and}\quad (0)_v!=1.
$$
$$
\binom{n}{k}_v=\frac{(n)_v!}{(k)_v!(n-k)_v!}.
$$

The following identities are well-known.
\begin{eqnarray}
&&(m+n)_{v}=(m)_{v}+v^{m}(n)_{v},\\
&&\bi{m}{k}_{v}(m{-}k)_{v}=\bi{m}{k{+}1}_{v}(k{+}1)_{v},\\
&&\bi{r}{k}_{v}\bi{k}{m}_{v}\bi{r{-}k}{n}_{v}=\bi{r{-}m{-}n}{k{-}m}_{v}\bi{m{+}n}{m}_{v}\bi{r}{m{+}n}_{v},\\
&&\bi{n}{k}_v=v^k\bi{n{-}1}{k}_v+\bi{n{-}1}{k{-}1}_v=\bi{n{-}1}{k}_v+v^{n-k}\bi{n{-}1}{k{-}1}_v,\\
&&\sum_{k=0}^{n}(-1)^k\bi{n}{k}_{v}v^{\frac{k(k-1)}{2}}a^{n-k}z^k=\prod_{k=0}^{n-1}(a-vz^k),\
\ \forall\ \ \textit{scalar}\  a.
\end{eqnarray}

\subsection{}
Assume that $R$ is a field ($\text{char} R\neq2$) with an
automorphism $\tau$. Let V be a $R$-vector space. A {\it
$\tau$-linear map} $f$ on $V$ is a function: $V\to R$ such that
$$
f(av)=\tau(a)f(v),\qquad\text{for any }\ a\in R,\ v\in V.
$$
A {\it$\tau$-sesquilinear form} $f$ on $V$ is a function: $V\times
V\to R$, subject to the conditions:
\begin{eqnarray*}
&&f(x+y,z)=f(x,z)+f(y,z),\\
&&f(x,y+z)=f(x,y)+f(x,z),\\
&&f(ax,y)=\tau(a)f(x,y)=f(x,\tau(a)y),\qquad\forall\ a\in R,
\end{eqnarray*}
for any $x,y,z\in V$. If $\tau$ is the identity, $f$ is an ordinary
bilinear form on $V$. A $\tau$-sesquilinear form $f$ with
$\tau^2=\id$ is called {\it $\tau$-Hermitian form} if $\tau
(f(x,y))=f(y,x)$ for any $x,y\in V$. If $\tau=\id$, $f$ is a
symmetric bilinear form on $V$.

\subsection{}
Let $\mg_A$ be a symmetrizable Kac-Moody algebra over $\bQ$  and
$A=(a_{ij})_{i,j\in I}$ be an associated  generalized Cartan matrix.
Let $d_i$ be relatively prime positive integers such that
$d_ia_{ij}=d_{j}a_{ji}$ for $i,j\in I$. Let $\Phi$ be the root
system, $\Pi=\{\al_i\mid i\in I\}$ a set of simple roots,
$Q=\bigoplus_{i\in I}\bZ\al_i$ the root lattice, and then with
respect to $\Pi$, we have $\Phi^+$ the system of positive roots,
$Q^+=\bigoplus_{i\in I}\bZ_+\al_i$ the positive root lattice, $\La$
the weight lattice, and $\La^+$ the set of dominant weights. Let
$q_{ij}$ be indeterminates over $\bQ$ and $\bQ(q_{ij}\;|\;i,j\in I)$
be the fraction field of polynomial ring $\bQ[q_{ij}\;|\;i,j\in I]$
such that
\begin{eqnarray}
q_{ij}q_{ji}=q_{ii}^{a_{ij}}.
\end{eqnarray}
Let $\bK\supset\bQ(q_{ij\;|\;i,j\in\ I})$ be a field such that
$q_{ii}^{\frac{1}{m}}\in\bK$ for $m\in\bZ_{+}$. Assume that there
exists an involution $\bQ$-automorphism $\tau$ of $\bK$ such that
$\tau(q_{ij})=q_{ji}$. Denote $\q:=(q_{ij})_{i,j\in I}$.

\smallskip
\begin{defi}\label{defi}
The multi-parameter quantum group $U_{\q}(\mg_A)$ is an associative
algebra over $\bK$ with $1$ generated by the elements $e_i, f_i,
\om_i^{\pm1}, \om_i'^{\pm1} \ (i\in I)$, subject to the relations:
\begin{eqnarray*}
 &(R1)&\quad\om_i^{\pm1}\om_j'^{\pm1}=\om_j'^{\pm1}\om_i^{\pm1},\quad
\om_i^{\pm1}\om_i^{\mp1}=\om_i'^{\pm1}\om_i'^{\mp1}=1,\\
 &(R2)&\quad\om_i^{\pm1}\om_j^{\pm1}=\om_j^{\pm1}\om_i^{\pm1},
 \quad\om_i'^{\pm1}\om_j'^{\pm1}=\om_j'^{\pm1}\om_i'^{\pm1},\\
&(R3)&\quad \om_ie_j\om_i^{-1}=q_{ij} e_j,
\qquad\om_i'e_j\om_i'^{-1}=q_{ji}^{-1} e_j,
\\
&(R4)&\quad\om_if_j\om_i^{-1}=q_{ij}^{-1} f_j,\qquad
\om_i'f_j\om_i'^{-1}=q_{ji} f_j,\\
&(R5)&\quad[\,e_i, f_j\,]=\delta_{i,j}\frac{q_{ii}}{q_{ii}-1}({\om_i-\om_i'}),\\
&(R6)&\quad \sum_{k=0}^{1-a_{ij}} (-1)^{k}
\binom{1-a_{ij}}{k}_{q_{ii}}q_{ii}^{\frac{k(k-1)}{2}}
q_{ij}^{k}e_{i}^{1-a_{ij}-k} e_{j} e_{i}^{k} =0 \quad (i\ne j),\\
&(R7)&\quad\sum_{k=0}^{1-a_{ij}} (-1)^{k}
\binom{1-a_{ij}}{k}_{q_{ii}}q_{ii}^{\frac{k(k-1)}{2}}
q_{ij}^{k}f_{i}^{k} f_{j} f_{i}^{1-a_{ij}-k}=0 \quad (i\ne j).
\end{eqnarray*}
\end{defi}

\begin{pro}\label{Hopf}
The associative algebra $U_{\q}(\mg)$ has a Hopf algebra structure
with the comultiplication, the counit and the antipode given by:
\begin{eqnarray*}
&& \Delta(\om_i^{\pm1})=\om_i^{\pm1}\ot\om_i^{\pm1}, \qquad
\Delta({\om_i'}^{\pm1})={\om_i'}^{\pm1}\ot{\om_i'}^{\pm1},\\
&&\Delta(e_i)=e_i\ot 1+\om_i\ot e_i, \qquad \Delta(f_i)=1\ot
f_i+f_i\ot \om_i',\\
&&\vn(\om_i^{\pm1})=\vn({\om_i'}^{\pm1})=1, \qquad\quad
\vn(e_i)=\vn(f_i)=0, \\
&&S(\om_i^{\pm1})=\om_i^{\mp1}, \qquad\qquad\qquad
S({\om_i'}^{\pm1})={\om_i'}^{\mp1},
\\
&&S(e_i)=-\om_i^{-1}e_i,\qquad\qquad\qquad
S(f_i)=-f_i\,{\om_i'}^{-1}.
\end{eqnarray*}
\end{pro}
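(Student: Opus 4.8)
The plan is to verify directly that the listed maps equip $U_{\q}(\mg)$ with a Hopf algebra structure. Since $U_{\q}(\mg)$ is presented by generators and relations, the substantive point is well-definedness: I would extend $\Delta$ and $\vn$ to algebra homomorphisms $U_{\q}(\mg)\to U_{\q}(\mg)\ot U_{\q}(\mg)$ and $U_{\q}(\mg)\to\bK$, and $S$ to an algebra anti-homomorphism, which requires checking that the proposed images of $e_i,f_i,\om_i^{\pm1},{\om_i'}^{\pm1}$ satisfy the relations (R1)--(R7) (for $S$, the relations of the opposite algebra). Granting this, coassociativity $(\Delta\ot\id)\Delta=(\id\ot\Delta)\Delta$, the counit axioms $(\vn\ot\id)\Delta=\id=(\id\ot\vn)\Delta$, and the antipode axioms $m(S\ot\id)\Delta=\vn(\,\cdot\,)1=m(\id\ot S)\Delta$ (with $m$ the multiplication) need only be checked on generators: in each case the set of elements for which the identity holds is a subalgebra, because $\Delta,\vn$ are homomorphisms and $S$ an anti-homomorphism. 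For instance, on $e_i$ one finds $m(S\ot\id)\Delta(e_i)=-\om_i^{-1}e_i+\om_i^{-1}e_i=0=\vn(e_i)1$, while $(\Delta\ot\id)\Delta(e_i)=e_i\ot1\ot1+\om_i\ot e_i\ot1+\om_i\ot\om_i\ot e_i=(\id\ot\Delta)\Delta(e_i)$; the cases of $f_i$ and the group-likes are identical in spirit.

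Relations (R1)--(R4) are preserved by $\Delta$ immediately, since $\Delta(\om_i^{\pm1})=\om_i^{\pm1}\ot\om_i^{\pm1}$ is diagonal and $\Delta(\om_i)\Delta(e_j)\Delta(\om_i)^{-1}=q_{ij}\Delta(e_j)$ follows componentwise from (R3). The first nontrivial relation is (R5). Expanding $[\Delta(e_i),\Delta(f_j)]$ with $\Delta(e_i)=e_i\ot1+\om_i\ot e_i$ and $\Delta(f_j)=1\ot f_j+f_j\ot\om_j'$ yields four cross terms; using (R3)--(R4) in the form $\om_if_j=q_{ij}^{-1}f_j\om_i$ and $e_i\om_j'=q_{ij}\om_j'e_i$, the mixed contribution $\om_if_j\ot e_i\om_j'-f_j\om_i\ot\om_j'e_i$ vanishes because the two scalars are mutually inverse, and the remaining terms collapse to $\delta_{ij}\frac{q_{ii}}{q_{ii}-1}(\om_i\ot\om_i-\om_i'\ot\om_i')=\delta_{ij}\frac{q_{ii}}{q_{ii}-1}(\Delta(\om_i)-\Delta(\om_i'))$, as required. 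The corresponding checks that $\vn$ and $S$ respect (R1)--(R5) are routine.

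The main obstacle is the quantum Serre relations (R6) and (R7). Writing $a=e_i\ot1$ and $b=\om_i\ot e_i$, relation (R3) gives $ba=q_{ii}ab$, so the Gauss binomial theorem---which follows from the recursion (2.4)---yields $\Delta(e_i)^{n}=\sum_{k=0}^{n}\binom{n}{k}_{q_{ii}}e_i^{n-k}\om_i^{k}\ot e_i^{k}$. Inserting this and $\Delta(e_j)=e_j\ot1+\om_j\ot e_j$ into $\Delta$ applied to the left-hand side $u$ of (R6), and commuting the group-likes past the $e$'s (which introduces the scalars $q_{ij},q_{ji},q_{ii}$), one regroups $\Delta(u)$ by its degree in the second tensor factor. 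Here the combinatorial identities (2.1)--(2.5), together with the normalization $q_{ij}q_{ji}=q_{ii}^{a_{ij}}$---which is exactly what makes the $q_{ji}$-factors produced by moving $\om_j$ through $e_i^{\,m}$ combine correctly with the Serre coefficients $q_{ij}^{k}$---force every intermediate homogeneous component to cancel. What remains is precisely the skew-primitivity of $u$, namely $\Delta(u)=u\ot1+\om_i^{1-a_{ij}}\om_j\ot u$; since $u=0$ in $U_{\q}(\mg)$, both summands vanish in $U_{\q}(\mg)\ot U_{\q}(\mg)$ and (R6) is preserved. The verification for (R7) and for $S$ on (R6)--(R7) is entirely parallel. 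This regrouping, with the bookkeeping of $q$-binomial coefficients across all degrees, is the one genuinely laborious step; once it is done, $(U_{\q}(\mg),\Delta,\vn)$ is a bialgebra with antipode $S$, hence a Hopf algebra.
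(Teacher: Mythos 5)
Your proposal is correct and takes essentially the same route as the paper: the one genuinely nontrivial step you isolate---the skew-primitivity $\Delta(u_{ij}^{\pm})=u_{ij}^{\pm}\ot 1+\om_i^{1-a_{ij}}\om_j\ot u_{ij}^{\pm}$ of the Serre elements, obtained via the $q$-binomial expansion of $\Delta(e_i)^n$ from $ (\om_i\ot e_i)(e_i\ot 1)=q_{ii}(e_i\ot 1)(\om_i\ot e_i)$---is exactly the paper's Lemma \ref{primi}, whose proof in Appendix A is the same regrouping computation you describe, while the paper leaves the remaining routine verifications (relations (R1)--(R5), counit and antipode axioms on generators, multiplicativity of the defining identities) unwritten. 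Your handling of those routine steps, including the cancellation of the mixed terms in $[\Delta(e_i),\Delta(f_j)]$ and the observation that the antipode axioms propagate to products because $S$ is an anti-homomorphism, is accurate.
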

%\begin{proof}
%It is easy to check $\tilde{U}_{\q}(\mg)$ is a Hopf algebra. By
%Lemma \ref{primi}, one can prove $u_{ij}^{\pm} \ (i\neq j)$
%generates a Hopf ideal.
%\end{proof}

\begin{rem}
$(1)$\ Assume that $q_{ij}=q^{d_ia_{ij}}\ (i,j\in I)$. In this case,
we denote $U_{q,q^{-1}}(\mg_A):=U_{\q}(\mg_A)$, and
$$
U_{\q}(\mg_A)/(\om_i'-\om_i^{-1})\simeq U_q(\mg_A),
$$
where $ U_q(\mg_A) $ is the one-parameter quantum group of
Drinfeld-Jimbo type \cite{Ja}.

$(2)$\  Assume that $q_{ij}=r^{\lan j,i\ran}s^{-\lan i,j\ran}$,
where
$$
\lan i,j\ran:=
\begin{cases}
&d_ia_{ij}\quad i<j,\\
&d_i\quad\quad\, i=j,\\
&0\quad\quad\ \   i>j.
\end{cases}
$$
$U_{\q}(\mg_A)$ is one of a class of two-parameter quantum groups
introduced uniformly by Hu-Pei \cite{HP1}, which, owing to
nonuniqueness of definitions for two-parameter quantum groups, have
some overlaps with the former examples defined in such as
\cite{BGH1, BGH2, BH, BKL, BW0, BW1, BW2, HP1, HS, HRZ} and
references therein.

$(3)$\  Assume that $\mg_A$ is of finite type and
$q_{ij}=q^{-u(\al_i,\al_j)-d_ia_{ij}}$, where $u$ is a skew
$\bZ$-bilinear form on root lattice $Q$. Then $U_{\q}(\mg_A)$ is the
multi-parameter quantum group $U_{q,Q}$ introduced by Hodge et al
\cite{Ho, HLT}. Note that the Hopf dual objects of these quantum
groups are isomorphic to those quantum groups discussed by
Reshetikhin \cite{Re} $($also see \cite{CV1}$)$.

$(4)$\  Assume that $q_{ij}=q^{d_ia_{ij}}p_{ij}$ where
$P=(p_{ij})_{i,j\in I}$ such that $p_{ij}p_{ji}=1, {p_{ii}}=1$. Then
$U_{\q}(\mg)$ are the multi-parameter quantum groups $U_{q,P}$
introduced by Hayashi in \cite{Ha}.
\smallskip

$(5)$\  Assume that $\mg=A_n$, $U_{\q}(\mg)$ is the multi-parameter
quantum groups or their dual object studied by many authors $($see
\cite{AE}, \cite{AST}, \cite{CM},  and references therein$)$.
\end{rem}

\begin{rem} The definition of $U_{\q}(\mg_A)$ has appeared in \cite{F1, F}.
The positive part of $U_{\q}(\mg_A)$ has appeared in \cite{Ro1}. The
Borel part of $U_{\q}(\mg_A)$ has appeared in \cite{Kh2}.
\end{rem}

{\bf From now on, we always assume that $q_{ii}$ are not roots of
unity.}

\subsection{}
Note that $\tau: \bK\to\bK$ that is defined by $\tau(q_{ij})=q_{ji}$
for $i,j\in I$ is a $\bQ$-automorphism of $\bK$.
\begin{lem}\label{auto}{\quad}\\
$(1)$\ There is a $\tau$-linear $\bQ$-algebra automorphism $\Phi$ of
$U_{\q}(\mg)$ defined by
\begin{eqnarray}
e_i\mapsto f_i,\quad f_i\mapsto e_i,\quad
\om_i\mapsto\om_i',\quad \om_i'\mapsto\om_i.
\end{eqnarray}

\noindent$(2)$\  There is a $\bK$-algebra anti-automorphism $\Psi$
of $U_{\q}(\mg)$ defined by
\begin{eqnarray}
e_i\mapsto f_i,\quad f_i\mapsto
e_i,\quad\om_i\mapsto\om_i,\quad\om_i'\mapsto\om_i'.
\end{eqnarray}
\end{lem}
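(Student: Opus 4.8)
The plan is to use the presentation of $U_{\q}(\mg)$ by generators $e_i,f_i,\om_i^{\pm1},\om_i'^{\pm1}$ and relations (R1)--(R7): to build a (possibly $\tau$-linear) morphism it suffices to prescribe it on generators and to check that it carries each defining relation into the relation ideal, i.e. that the images of the generators satisfy the corresponding relations. For (1), the assignment extends uniquely to a $\tau$-semilinear ring endomorphism $\widetilde\Phi$ of the free $\bK$-algebra $F$ on these generators, and the task is to verify that $\widetilde\Phi$ maps each relator into the two-sided ideal $J$ of relations, so that it descends to $U_{\q}(\mg)=F/J$. The toral relations (R1),(R2) are preserved at once, since $\Phi$ only interchanges $\om_i^{\pm1}\leftrightarrow\om_i'^{\pm1}$. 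For (R3),(R4) the $\tau$-twist turns the scalar $q_{ij}$ into $\tau(q_{ij})=q_{ji}$, and a direct check then shows $\Phi$ sends (R3) to (R4) and (R4) to (R3). For (R5), since $\tau(q_{ii})=q_{ii}$ the coefficient $\frac{q_{ii}}{q_{ii}-1}$ is $\tau$-invariant, while $\Phi([e_i,f_j])=[f_i,e_j]=-[e_j,f_i]$ and $\Phi(\om_i-\om_i')=\om_i'-\om_i$; the matching sign change on both sides recovers (R5) with $i,j$ interchanged.

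The one substantial point is the Serre relations (R6),(R7). Write $n=1-a_{ij}$. Applying $\Phi$ to (R6), $\tau$-semilinearity replaces $q_{ij}^{k}$ by $q_{ji}^{k}$ (the $q_{ii}$-binomial and $q_{ii}^{k(k-1)/2}$ being $\tau$-invariant) and replaces $e_i^{n-k}e_je_i^{k}$ by $f_i^{n-k}f_jf_i^{k}$. I would then reindex the sum by $k\mapsto n-k$, use the symmetry $\binom{n}{n-k}_{q_{ii}}=\binom{n}{k}_{q_{ii}}$, and substitute $q_{ji}=q_{ii}^{a_{ij}}q_{ij}^{-1}$ coming from the constraint $q_{ij}q_{ji}=q_{ii}^{a_{ij}}$. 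A short exponent computation shows that all dependence on the summation index in the coefficients cancels, so that the image of (R6) equals the nonzero scalar $(-1)^{n}q_{ii}^{-n(n-1)/2}q_{ij}^{-n}$ times the left-hand side of (R7), and therefore lies in $J$. This identity --- that the $\tau$-twist of the $e$-Serre relation is proportional to the $f$-Serre relation --- is the main obstacle, and it is precisely where $q_{ij}q_{ji}=q_{ii}^{a_{ij}}$ enters.

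Once $\Phi$ is well defined, bijectivity is automatic. Since $\tau^2=\id$, the composite $\Phi^2$ satisfies $\Phi^2(ax)=\tau^2(a)\Phi^2(x)=a\,\Phi^2(x)$, so it is an ordinary $\bK$-linear algebra endomorphism; as it fixes every generator, $\Phi^2=\id$. Hence $\Phi$ is a $\tau$-linear $\bQ$-algebra automorphism with $\Phi^{-1}=\Phi$.

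For (2) the argument is parallel but easier, because no scalar is twisted: I would extend the assignment to a $\bK$-linear anti-endomorphism of $F$ and check the relations. The simplification is that an anti-homomorphism reverses products, so $\Psi(e_i^{n-k}e_je_i^{k})=f_i^{k}f_jf_i^{n-k}$; as the coefficients are left untouched, $\Psi$ sends (R6) \emph{directly} to (R7) with no reindexing and no use of the constraint on the $q_{ij}$. Likewise (R3) maps to (R4), (R5) is preserved via $\Psi([e_i,f_j])=[e_j,f_i]$, and (R1),(R2) are immediate. Finally $\Psi^2$ is an algebra homomorphism fixing the generators, so $\Psi^2=\id$ and $\Psi$ is an anti-automorphism with $\Psi^{-1}=\Psi$.
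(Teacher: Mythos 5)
Your proof is correct and takes essentially the same route as the paper: the paper likewise reduces everything to checking the defining relations on generators, its only substantive step being that the $\q$-Serre relation (R6) is equivalent, after the reindexing $k\mapsto(1-a_{ij})-k$ and the constraint $q_{ij}q_{ji}=q_{ii}^{a_{ij}}$, to the reversed form with $q_{ji}^{k}$ --- precisely the proportionality you compute, with your explicit scalar $(-1)^{n}q_{ii}^{-n(n-1)/2}q_{ij}^{-n}$ merely making the paper's unproved ``is equivalent to'' quantitative. Your remaining verifications ((R1)--(R5), the involutivity $\Phi^{2}=\Psi^{2}=\id$, and the observation that the anti-automorphism $\Psi$ sends (R6) to (R7) with no reindexing) fill in details the paper dismisses as clear.
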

\begin{proof}(2) is clear. (1) is due to the fact:
The $\q$-Serre relation
\begin{eqnarray*} &&\sum_{k=0}^{1-a_{ij}}
(-1)^{k} \binom{1-a_{ij}}{k}_{q_{ii}} q_{ii}^{\frac{k(k-1)}{2}}
q_{ij}^{k} e_{i}^{1-a_{ij}-k} e_{j} e_{i}^{k}=0,
\end{eqnarray*}
is equivalent to
\begin{eqnarray*}
&&\sum_{k=0}^{1-a_{ij}} (-1)^{k} \binom{1-a_{ij}}{k}_{q_{ii}}
q_{ii}^{\frac{k(k-1)}{2}}
 q_{ji}^{k}e_{i}^{k} e_{j} e_{i}^{1-a_{ij}-k}=0.
\end{eqnarray*}

This completes the proof.
\end{proof}

\subsection{}
It will be convenient to work with the algebra $\tilde{U}_{\q}(\mg)$
defined by the same generators
$e_i,f_i,\om_{i}^{\pm1},\om_{i}'^{\pm1}$ for $i\in I$, and subject
to relations $(R1)$---$(R5)$ only (without Serre relations). We have
the canonical homomorphism $\tilde{U}_{\q}(\mg)\twoheadrightarrow
{U}_{\q}(\mg)$. We abuse the notations both for the corresponding
elements in $\tilde{U}_{\q}(\mg)$ and ${U}_{\q}(\mg)$, which will be
clear from the context. For any $i,j\in I$ with $i\neq j$, set
\begin{eqnarray}
&&u_{ij}^+:=\sum_{k=0}^{1-a_{ij}} (-1)^{k} \binom{1-a_{ij}}{k}_{q_{ii}}
q_{ii}^{\frac{k(k-1)}{2}} q_{ij}^{k} e_{i}^{1-a_{ij}-k} e_{j}
e_{i}^{k},\\
&&u_{ij}^-:=\sum_{k=0}^{1-a_{ij}} (-1)^{k}
\binom{1-a_{ij}}{k}_{q_{ii}}q_{ii}^{\frac{k(k-1)}{2}}
q_{ij}^{k}f_{i}^{k} f_{j} f_{i}^{1-a_{ij}-k}.
\end{eqnarray}

\begin{lem}\label{primi}
Let $i,j\in I$ with $i\neq j$. Then
\begin{eqnarray*}
&&\De(u_{ij}^+)=u_{ij}^+\ot 1 +\om_i^{1-a_{ij}}\om_{j}\ot u_{ij}^+,
\quad\De(u_{ij}^-)=u_{ij}^-\ot \om_i'^{1-a_{ij}}\om_{j}' +1\ot u_{ij}^-.
\end{eqnarray*}
\end{lem}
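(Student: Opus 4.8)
The plan is to compute $\De(u_{ij}^+)$ directly in $\tilde U_{\q}(\mg)$, where $\De$ is well defined since only $(R1)$--$(R5)$ intervene, and then to obtain the $u_{ij}^-$ case from the $u_{ij}^+$ case by symmetry. For the reduction I would note that the same formulas as in Lemma \ref{auto}(1) define a $\tau$-linear algebra automorphism $\Phi$ of $\tilde U_{\q}(\mg)$, and that $(\Phi\ot\Phi)\De=\De^{\op}\Phi$: this holds on generators, since $(\Phi\ot\Phi)\De(e_i)=f_i\ot1+\om_i'\ot f_i$ is the flip of $\De(f_i)=\De(\Phi(e_i))$, and both $(\Phi\ot\Phi)\De$ and $\De^{\op}\Phi$ are $\tau$-linear algebra maps, hence agree everywhere. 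The reindexing used in the proof of Lemma \ref{auto} shows $\Phi(u_{ij}^+)$ is a nonzero scalar multiple of $u_{ij}^-$; applying $(\Phi\ot\Phi)$ to the formula for $\De(u_{ij}^+)$ and flipping then gives precisely the claimed formula for $\De(u_{ij}^-)$ (the scalar drops out, the identity being homogeneous). Thus it suffices to treat $u_{ij}^+$.

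Set $N:=1-a_{ij}$. First I would record the power rule $\De(e_i^{\,n})=\sum_{a=0}^{n}\binom{n}{a}_{q_{ii}}e_i^{\,n-a}\om_i^{a}\ot e_i^{a}$, obtained from the quantum binomial theorem applied to $A=e_i\ot1$ and $B=\om_i\ot e_i$: relation $(R3)$ gives $BA=q_{ii}AB$, and $B^{a}=\om_i^{a}\ot e_i^{a}$. Since $\De$ is multiplicative, $\De(u_{ij}^+)=\sum_{k}(-1)^k\binom{N}{k}_{q_{ii}}q_{ii}^{\binom{k}{2}}q_{ij}^{k}\,\De(e_i)^{N-k}\De(e_j)\De(e_i)^{k}$, which I would split through $\De(e_j)=e_j\ot1+\om_j\ot e_j$ into an ``$e_j$-left'' and an ``$e_j$-right'' part. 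In each part one drives every $\om_i$-power past $e_j$ and past the powers of $e_i$ using $(R3)$, recording the resulting powers of $q_{ij}$ and $q_{ii}$.

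The assertion is that the two surviving contributions are exactly $u_{ij}^+\ot1$ and $\om_i^{N}\om_j\ot u_{ij}^+$. In the $e_j$-left part I would group terms by the power $c$ of $e_i$ in the right tensor factor: the $c=0$ terms reassemble $u_{ij}^+\ot1$. In the $e_j$-right part I would group by the power of $e_i$ in the left factor: when this factor is the maximal $\om_i^{N}\om_j$, the two binomials $\binom{N-k}{a}_{q_{ii}}\binom{k}{b}_{q_{ii}}$ force a single summand, which (after reindexing into the $q_{ji}$-form of the Serre element) assembles $\om_i^{N}\om_j\ot u_{ij}^+$.

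The heart of the matter---and the main obstacle---is the vanishing of all remaining cross terms. For fixed $c\ge1$ the relevant coefficient in the $e_j$-left part is a sum of products of three $q_{ii}$-binomials; I would collapse it with the identity $\binom{r}{k}_v\binom{k}{m}_v\binom{r-k}{n}_v=\binom{r-m-n}{k-m}_v\binom{m+n}{m}_v\binom{r}{m+n}_v$ from \S2.1, which rewrites the three binomials as $\binom{N-c}{q}_{q_{ii}}\binom{c}{b}_{q_{ii}}\binom{N}{c}_{q_{ii}}$. After this the $q_{ii}$-exponent separates so that the only dependence on the inner index $b$ is through $\binom{b}{2}$, and the inner sum becomes $\sum_{b=0}^{c}(-1)^b\binom{c}{b}_{q_{ii}}q_{ii}^{\binom{b}{2}}=\prod_{l=0}^{c-1}(1-q_{ii}^{\,l})$ by the $q$-binomial theorem of \S2.1; this vanishes for every $c\ge1$ through its $l=0$ factor $1-q_{ii}^{0}=0$. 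The $e_j$-right cross terms cancel by the identical mechanism. The real labour is therefore the disciplined tracking of the $q$-powers generated when commuting the $\om_i$'s, after which the cancellation is forced by these two standard identities; it is worth noting that neither $N=1-a_{ij}$ nor $q_{ij}q_{ji}=q_{ii}^{a_{ij}}$ enters, which is exactly why the lemma is stated in $\tilde U_{\q}(\mg)$.
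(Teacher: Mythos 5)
Your architecture is the paper's own: Appendix A proves the $u_{ij}^+$ formula by exactly the direct computation you outline --- the $q_{ii}$-binomial power rule for $\De(e_i)^n$, the split along $\De(e_j)=e_j\ot 1+\om_j\ot e_j$, the three-binomial identity of subsection 2.1 to collapse the coefficient products, and alternating Gaussian sums to annihilate cross terms --- while your reduction of the $u_{ij}^-$ case through $\Phi$ and the relation $(\Phi\ot\Phi)\De=\De^{\mathrm{op}}\Phi$ is a correct shortcut that the paper leaves implicit (Appendix A treats only the $+$ case). A small inaccuracy: the surviving block in the $e_j$-right part comes out directly with the $q_{ij}^{u}$-coefficients of $u_{ij}^+$, so no reindexing into the $q_{ji}$-form is needed there.

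There is, however, a genuine error in your analysis of the cancellation, crystallized in your closing remark that neither $N=1-a_{ij}$ nor $q_{ij}q_{ji}=q_{ii}^{a_{ij}}$ enters. For the $e_j$-left family your mechanism is right: the inner sum is the pure Gaussian sum $\sum_{b}(-1)^b\binom{c}{b}_{q_{ii}}q_{ii}^{\binom{b}{2}}=\prod_{l=0}^{c-1}(1-q_{ii}^{l})$, killed by its $l=0$ factor. But in the $e_j$-right family the inner index does \emph{not} occur only through $\binom{b}{2}$: commuting $\om_j$ and the $\om_i$-powers past the $e_i$'s produces an extra geometric factor, and the inner sum is $\sum_{n}(-1)^n\binom{t}{n}_{q_{ii}}q_{ii}^{\binom{n}{2}}z^{n}=\prod_{l=0}^{t-1}(1-q_{ii}^{l}z)$ with $z=q_{ii}^{1-a_{ij}-t}q_{ij}q_{ji}$. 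This vanishes only because the constraint $q_{ij}q_{ji}=q_{ii}^{a_{ij}}$ turns $z$ into $q_{ii}^{1-t}$, making the $l=t-1$ factor zero --- and it is this step that also pins down the exponent $1-a_{ij}$. The paper uses the constraint at exactly this point in Appendix A, where $q_{ij}^{u+n}q_{ji}^{n}$ is converted to $q_{ij}^{u}$ by absorbing $(q_{ij}q_{ji})^{n}=q_{ii}^{na_{ij}}$ into the $q_{ii}$-exponent. The simplest case already shows the formula is false without the constraint: for $a_{ij}=0$, $u_{ij}^+=e_ie_j-q_{ij}e_je_i$ and a two-line computation gives
\[
\De(u_{ij}^+)=u_{ij}^+\ot 1+\om_i\om_j\ot u_{ij}^+ +(1-q_{ij}q_{ji})\,e_i\om_j\ot e_j,
\]
whose cross term dies by $q_{ij}q_{ji}=1=q_{ii}^{a_{ij}}$ and by nothing else. (Your $\Phi$-reduction also rests on the constraint: the ratio of coefficients of $\Phi(u_{ij}^+)$ against those of $u_{ij}^-$ after the reindexing $k\mapsto 1-a_{ij}-k$ contains the factor $(q_{ii}^{-a_{ij}}q_{ij}q_{ji})^{-k}$, which is $k$-independent precisely because of it.) So the lemma lives in $\tilde{U}_{\q}(\mg)$ only because $u_{ij}^{\pm}=0$ in ${U}_{\q}(\mg)$; the parameter constraint, built into $\bK$ from the start, is indispensable to the statement.
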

\begin{proof}
See Appendix A.
\end{proof}

\subsection{}
Let $U_{\q}^+$ (respectively, $U_{\q}^-$) be the subalgebra of
$U_{\q}$ generated by the elements $e_i$ (respectively, $f_i$) for
$i\in I$,  $U_{\q}^{+0}$ (respectively, $U_{\q}^{-0}$) the
subalgebra of $U_{\q}$ generated by $\om_i^{\pm1}$ (respectively,
$\om_i'^{\pm1}$)  for $i\in I$. Let $U_{\q}^{0}$ be the subalgebra
of $U_{\q}$ generated by $\om_i^{\pm1},\om_i'^{\pm1}$  for $i\in I$.
Moreover, Let $U_{\q}^{\leq0}$ (respectively, $U_{\q}^{\geq0}$) be
the subalgebra of $U_{\q}$ generated by the elements
$e_i,\om_i^{\pm1}$ for $i\in I$ (respectively, $f_i, \om_i'^{\pm1}$
for $i\in I$). It is clear that $U_{\q}^0, {U}_{\q}^{\pm0}$ are
commutative algebras.  Similarly, we can define $\tilde{U}_{\q}^+,
\tilde{U}_{\q}^-,\tilde{U}_{\q}^0$, etc. For each $\mu\in Q$, we can
define the elements $\om_{\mu}$ and $\om_{\mu}'$ by
$$
\om_{\mu}=\prod_{i\in I}\om_{i}^{\mu_i},\qquad
\om'_{\mu}=\prod_{i\in I}{\om'_{i}}^{\mu_i}
$$
if $\mu=\sum_{i\in I}\mu_i\al_i\in Q$. For any $\mu,\ \nu\in Q$, we
denote
$$
q_{\mu\nu}:=\prod_{i,j\in I} q_{ij}^{\mu_i\nu_j}
$$
if $\mu=\sum_{i\in I}\mu_i \al_i$ and $\nu=\sum_{j\in I}\nu_j \al_j$.
Let
$$
\deg e_i=\al_i,\quad \deg f_i=-\al_i,\quad \deg \om_i^{\pm1}=\deg
\om_i'^{\pm1}=0.
$$
Then
$$
U_{\q}^{\pm}=\bigoplus_{\be\in Q^+}(U_{\q}^{\pm})_{\pm \be},
$$
where
$$
(U_{\q}^{\pm})_{\pm \be}=\left\{x\in
U_{\q}^{\pm}\,\left|\,\om_{\mu}x\om_{-\mu} =q_{\mu\be}x,\
\om_{\mu}'x\om_{-\mu}' =q_{\be\mu}^{-1}x,\ \forall\ \mu\in
Q\right\}\right..
$$

\smallskip

\subsection{(Skew) Hopf pairings}
For $i\in I$, we define a linear form $\tau_i$ on $ U_{\q}^{\geq0}$
by
$$
\tau_i(e_i\om_{\mu})=\frac{q_{ii}}{1-q_{ii}},\qquad\text{for all}\
\mu\in Q,
$$
and
$$
\tau_i(U_{\nu}^{\geq0})=0,\qquad\text{for all}\ \nu\in Q\
\text{with}\ \nu\neq\al_i.
$$
For each sequence $J=(\be_1,\dots,\be_l)$ of simple roots, let
$$
\tau_{J}=\tau_{\be_1}\cdots \tau_{\be_l},\quad \deg J=\be_1+\cdots+\be_l.
$$
and for $J=\varnothing$, $\tau_{J}=1$. Then
$$
\tau_i(e_{J}\om_{\mu})=\begin{cases}
                    \frac{q_{ii}}{1-q_{ii}},\qquad\text{if}\  J=(\al_i),\\
                    0,\quad\ \,\qquad\text{otherwise}.
                   \end{cases}
$$
For any $\mu\in Q$, let $k_{\mu}:U_{\q}^{\geq0}\to \bK$ be the algebra homomorphism with
$$
k_{\mu}(xK_{\nu})=\vep(x)q_{\nu\mu}\quad\text{for all}\ \nu\in Q\ \text{and}\ x\in U_{\q}^+.
$$
Then we have for all sequences $J$ of simple roots and all $\mu\in Q$,
$$
k_{\mu}(e_{J}\om_{\nu})=\begin{cases}
                    q_{\nu\mu},\quad\text{if}\  J=\varnothing;\\
                    0,\quad\quad\text{otherwise}.
                   \end{cases}
$$

\begin{lem}{\ }\\
\noindent$(1)$\ For all sequences $J, J'$ of simple roots and all
$\mu\in Q$, we have
$$
\tau_{J}(e_{J'}\om_{\mu})=\tau_{J}(e_{J'})
$$
and if $\deg(J)\neq\deg(J')$, then $\tau_{J}=0.$
\smallskip

\noindent$(2)$\
For all $\mu,\nu\in Q$ and all sequences $J$ of simple roots, we have
$$
k_{\mu}k_{\nu}=k_{\mu+\nu},\qquad
k_{\mu}\tau_{J}=q_{|J|\mu}\tau_{J}k_{\mu}.
$$
\end{lem}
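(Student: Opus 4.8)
The plan is to carry out every computation inside the convolution algebra $(U_{\q}^{\geq0})^*$, whose product (written multiplicatively, as in the statement) is $(fg)(x)=(f\ot g)(\De(x))$ for the coproduct of Proposition~\ref{Hopf}; this is legitimate since $U_{\q}^{\geq0}$ is a Hopf subalgebra, because $\De e_i=e_i\ot1+\om_i\ot e_i$ and $\De\om_i^{\pm1}=\om_i^{\pm1}\ot\om_i^{\pm1}$ stay inside it. Two elementary facts will drive everything. First, from its defining formula each $\tau_i$ is \emph{$\om$-insensitive and homogeneous}: $\tau_i(z\om_\mu)=\tau_i(z)$ for all $z\in U_{\q}^{\geq0}$, $\mu\in Q$, and $\tau_i$ vanishes on the graded piece $(U_{\q}^{\geq0})_\be$ unless $\be=\al_i$. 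Second, $q_{\mu\nu}=\prod_{i,j}q_{ij}^{\mu_i\nu_j}$ is bi-multiplicative, i.e.\ $q_{\mu+\mu',\nu}=q_{\mu\nu}q_{\mu'\nu}$ and similarly in the second slot. I will also use that the coproduct is graded, $\De\big((U_{\q}^{\geq0})_\be\big)\subseteq\bigoplus_{\ga+\de=\be}(U_{\q}^{\geq0})_\ga\ot(U_{\q}^{\geq0})_\de$, which is immediate from the bidegrees $(\al_i,0)$, $(0,\al_i)$ in $\De e_i$ and $(0,0)$ in $\De\om_i^{\pm1}$.

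For part~(1) I would induct on the length $l$ of $J$, writing $\tau_J=\tau_{\be_1}\tau_{J''}$ with $J''=(\be_2,\dots,\be_l)$. The base case $l=1$ is exactly the first elementary fact. For the inductive step, $\De(z\om_\mu)=\De(z)(\om_\mu\ot\om_\mu)=\sum z^{(1)}\om_\mu\ot z^{(2)}\om_\mu$, so
\begin{eqnarray*}
\tau_J(z\om_\mu)=\sum\tau_{\be_1}(z^{(1)}\om_\mu)\,\tau_{J''}(z^{(2)}\om_\mu)
=\sum\tau_{\be_1}(z^{(1)})\,\tau_{J''}(z^{(2)})=\tau_J(z),
\end{eqnarray*}
where the middle equality is the induction hypothesis applied to the two shorter forms; taking $z=e_{J'}$ gives $\tau_J(e_{J'}\om_\mu)=\tau_J(e_{J'})$. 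The vanishing statement is proved by the same induction applied to the grading: for $z\in(U_{\q}^{\geq0})_\be$ every surviving term needs $\deg z^{(1)}=\be_1$ and $\deg z^{(2)}=\deg J''$, forcing $\be=\be_1+\deg J''=\deg J$; applied to $z=e_{J'}\in(U_{\q}^{\geq0})_{\deg J'}$ this yields $\tau_J(e_{J'})=0$ whenever $\deg J\neq\deg J'$.

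For part~(2), the relation $k_\mu k_\nu=k_{\mu+\nu}$ is cleanest checked on the algebra generators $e_i,\om_i^{\pm1}$: all three forms are algebra homomorphisms into the commutative field $\bK$ (a convolution of algebra maps is again one), on $\om_i^{\pm1}$ the identity reduces to $q_{\al_i\mu}q_{\al_i\nu}=q_{\al_i,\mu+\nu}$, and on $e_i$ both sides vanish since $\vep(e_i)=0$. For $k_\mu\tau_J=q_{|J|\mu}\tau_J k_\mu$ I would first settle $J=(\al_i)$ by evaluating both forms on the basis $\{e_{J'}\om_\la\}$: by the support of $\tau_i$ both sides vanish unless $\deg J'=\al_i$, i.e.\ $e_{J'}=e_i$, where $\De(e_i\om_\la)=e_i\om_\la\ot\om_\la+\om_{\al_i+\la}\ot e_i\om_\la$ gives
\begin{eqnarray*}
(k_\mu\tau_i)(e_i\om_\la)=q_{\al_i+\la,\mu}\,\tfrac{q_{ii}}{1-q_{ii}},
\qquad
(\tau_i k_\mu)(e_i\om_\la)=\tfrac{q_{ii}}{1-q_{ii}}\,q_{\la\mu},
\end{eqnarray*}
whose ratio is $q_{\al_i+\la,\mu}/q_{\la\mu}=q_{\al_i\mu}$ by bi-multiplicativity. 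The general case then follows formally by induction and associativity, $k_\mu\tau_J=(k_\mu\tau_{\be_1})\tau_{J''}=q_{\be_1\mu}\tau_{\be_1}(k_\mu\tau_{J''})=q_{\be_1\mu}q_{\deg J'',\mu}\tau_J k_\mu=q_{|J|\mu}\tau_J k_\mu$, collecting $\prod_s q_{\be_s\mu}=q_{\deg J,\mu}=q_{|J|\mu}$. The only genuine computation is this single-generator commutation; everything else is bookkeeping, and I expect the main obstacle to be precisely the correct accounting of the $\om$-weights produced by the coproduct — the $\om_i$ in the left tensor slot of $\De e_i$ is what, after absorbing $\om_\la$, upgrades $q_{\la\mu}$ to $q_{\al_i+\la,\mu}$ and thereby yields the predicted scalar $q_{\al_i\mu}$.
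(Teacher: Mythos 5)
Your proof is correct. The paper states this lemma without any proof, so there is nothing to compare against: your convolution-algebra verification (induction on the length of $J$ for the $\om$-insensitivity and homogeneity of $\tau_J$, reduction to generators for $k_\mu k_\nu=k_{\mu+\nu}$, and the single-generator computation $k_\mu\tau_i=q_{\al_i\mu}\tau_i k_\mu$ followed by associativity) is exactly the routine Jantzen-style argument the authors leave implicit, and it reproduces the scalar $q_{|J|\mu}$ in the form the paper actually uses when checking that $\psi$ is an algebra homomorphism. One terminological slip: the monomials $e_{J'}\om_\la$ \emph{span} $U_{\q}^{\geq0}$ but are not a basis there (the $\q$-Serre relations hold on this side, unlike for $f_J\om'_\mu$ in $\widetilde{U}_{\q}^{\leq0}$); since you only evaluate and compare linear forms, a spanning set suffices and nothing in your argument breaks.
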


Elements $f_{J}\om'_{\mu}$ with all finite sequences $J$ of simple
roots and $\mu\in Q$ form a basis of $\widetilde{U}_{\q}^{\leq0}$.
Then there is a unique linear map $
\psi:\widetilde{U}_{\q}^{\leq0}\lra (U_{\q}^{\geq0})^* $ with
$\psi(f_{J}\om_{\mu}')=\tau_{J}k_{\mu}$ for all $J$ and $\mu$. Since
\begin{eqnarray*}
&&\psi(f_{J}\om_{\mu}'f_{J'}\om_{\nu}')=q_{|J'|\mu}\psi(f_{J+J'}\om_{\mu+\nu}')
=q_{|J'|\mu}\tau_{J+J'}k_{\mu+\nu},\\
&&\psi(f_{J}\om_{\mu}')\psi(f_{J'}\om_{\nu}')=\tau_{J}k_{\mu}\tau_{J'}k_{\nu}
=q_{|J'|\mu}\tau_{J+J'}k_{\mu+\nu}.
\end{eqnarray*}
We have, for all $J,J'$ and $\mu,\nu$,
$$
\psi(f_{J}\om_{\mu}'f_{J'}\om_{\nu}')=\psi(f_{J}\om_{\mu}')\psi(f_{J'}\om_{\nu}'),
$$
which implies that $\psi$ is in fact an algebra homomorphism. Now we
define a bilinear pairing $ \lg\, ,\, \rg:\
\widetilde{U}_{\q}^{\leq0}\times U_{\q}^{\geq0}\lra \bK $ by
$$
\lg y,\,x\rg=\psi(y)(x)\quad\text{for all }\ y\in \widetilde{U}_{\q}^{\leq0},\
x\in U_{\q}^{\geq0}.
$$
Then we have for all $J,\, J',\, \mu$ and $\nu$,
$$
\lg f_{J'}\om_{\mu}',\,e_{J}\om_{\nu}\rg=\tau_{J'}k_{\mu}(e_{J}\om_{\nu}).
$$
Moreover, we have
$$
\lg y\om_{\mu}',\,x\om_{\nu}\rg=q_{\nu\mu}\lg y,\,x\rg.
$$
and if $\mu,\, \nu\in Q$ with $\mu\neq\nu$, then
$$
\lg y,\,x\rg=0,\quad\text{for all}\ x\in (U_{\q}^+)_{\mu},\ y\in
(\widetilde{U}_{\q}^-)_{-\nu}.
$$

\begin{lem}
For all $x,\, x_1,\, x_2\in U_{\q}^{\geq0}$ and all $y,\, y_1,\,
y_2\in \widetilde{U}_{\q}^{\leq0}$, we have
$$
\lg y_1y_2,x\rg=\lg y_1\ot y_2,\,\De(x)\rg,\qquad  \lg
y,\,x_1x_2\rg=\lg \De(y),\,x_2\ot x_1\rg.
$$
\end{lem}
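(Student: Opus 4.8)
The plan is to read the two displayed formulas as the two halves of the assertion that $\lg\,,\,\rg$ is a skew Hopf pairing, and to establish them by dual arguments.

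The first identity is essentially built into the construction of $\psi$. Recall that $(U_{\q}^{\geq0})^*$ carries the convolution product dual to $\De$, i.e. $(\phi*\phi')(x)=(\phi\ot\phi')(\De x)$, and that the multiplication rules $k_\mu k_\nu=k_{\mu+\nu}$ and $k_\mu\tau_J=q_{|J|\mu}\tau_Jk_\mu$ recorded above are precisely the convolution products of these functionals. Since $\psi$ has been shown to be an algebra homomorphism for this product, for any $y_1,y_2\in\widetilde{U}_{\q}^{\leq0}$ and any $x\in U_{\q}^{\geq0}$ we get
$$
\lg y_1y_2,\,x\rg=\psi(y_1y_2)(x)=\big(\psi(y_1)*\psi(y_2)\big)(x)=\big(\psi(y_1)\ot\psi(y_2)\big)(\De x)=\lg y_1\ot y_2,\,\De x\rg .
$$
By bilinearity of both sides this holds for all $x,y_1,y_2$, which is the first assertion; here one also uses $\lg y,1\rg=\vep(y)$, so that $\psi(1)=\vep$ is the convolution unit.

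For the second identity — equivalently, the statement that $x\mapsto(y\mapsto\lg y,x\rg)$ is an algebra anti-homomorphism into the convolution algebra $(\widetilde{U}_{\q}^{\leq0})^*$ — I would argue by induction on the length of $x_2$ as a word in the generators $e_i,\om_i^{\pm1}$ of $U_{\q}^{\geq0}$. By bilinearity it suffices to take $y=f_{J'}\om_\mu'$ and $x_1,x_2$ monomials. Writing $x_2=x_2'\,g$ with $g$ a single generator and $x_2'$ shorter, associativity gives $x_1x_2=(x_1x_2')g$; applying the base case for $g$ (with $x_1x_2'$ in place of $x_1$), then the induction hypothesis to $x_2'$, and finally the coassociativity of $\De$ on $\widetilde{U}_{\q}^{\leq0}$, reduces everything to the two base cases $x_2=\om_i^{\pm1}$ and $x_2=e_i$. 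The empty case $x_2=1$ is the counit identity $\lg y,x_1\rg=\lg\De y,1\ot x_1\rg$, immediate from $\lg y,1\rg=\vep(y)$.

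The case $x_2=\om_i^{\pm1}$ is straightforward from the homogeneity relation $\lg y\om_\mu',x\om_\nu\rg=q_{\nu\mu}\lg y,x\rg$ and the additivity $q_{(\nu\pm\al_i)\mu}=q_{\pm\al_i\mu}\,q_{\nu\mu}$: pairing against the weight-zero element $\om_i^{\pm1}$ kills every summand of $\De y$ except $\om_\mu'\ot f_{J'}\om_\mu'$, which contributes the factor $\lg\om_\mu',\om_i^{\pm1}\rg=q_{\pm\al_i\mu}$, matching the left-hand side. The real work is the case $x_2=e_i$. Here I would expand $\De(y)=\De(f_{J'})(\om_\mu'\ot\om_\mu')$ using $\De(f_i)=1\ot f_i+f_i\ot\om_i'$; pairing the first leg against $e_i$ forces that leg to have weight $-\al_i$, so only the terms that select a single factor $f_i\ot\om_i'$ — one for each occurrence of $i$ in the sequence $J'$ — contribute. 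After commuting the resulting $\om_i'$ to the right past the remaining $f_j$'s via $\om_i'f_j=q_{ji}f_j\om_i'$, the right-hand side becomes a sum over those occurrences, and matching it with the left-hand side $\lg y,\,e_Je_i\om_\nu\rg$ amounts to the recursive formula for the forms $\tau_{J'}=\tau_{i_1}\cdots\tau_{i_l}$ dictated by their being convolution products and by the shape of $\De(f_i)$. The main obstacle is exactly this computation: one must check that all the scalars coming from $(R3)$--$(R5)$ — the factor $q_{\nu\al_i}$ from commuting $e_i$ past $\om_\nu$ on the left, the factors $q_{ji}$ from commuting $\om_i'$ past the $f_j$'s on the right, and the homogeneity factor $q_{\nu\mu}$ — assemble so that the two sides agree term by term. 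Once the identity is verified on the generators in this way, the induction above propagates it to all $x_1,x_2$.
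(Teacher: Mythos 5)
Your handling of the first identity is exactly right and is precisely what the paper does: the products $\tau_J k_\mu$ are convolution products in $(U_{\q}^{\geq0})^*$, and the multiplicativity of $\psi$ is verified explicitly in the paragraph preceding the lemma, so $\lg y_1y_2,x\rg=\psi(y_1)*\psi(y_2)(x)=\lg y_1\ot y_2,\De(x)\rg$ comes for free. (The paper states the lemma without proof; it is the multi-parameter analogue of Lemma 6.14 in Jantzen's book, which the whole construction of $\tau_i$, $k_\mu$, $e_J\om_\nu$, $f_J\om'_\mu$ is modelled on.)

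For the second identity, however, your induction on the length of $x_2$ stops exactly where the content is. The reduction to the base cases $x_2=\om_i^{\pm1}$ and $x_2=e_i$ is structurally sound, and your accounting of the scalars is accurate; but the case $x_2=e_i$ against an arbitrary $y=f_{J'}\om'_\mu$, $J'=(j_1,\dots,j_l)$, is equivalent (after stripping the factors $q_{\nu\al_i}$ and $q_{\nu\mu}$ exactly as you describe) to the recursion
\begin{equation*}
\tau_{J'}\bigl(e_{(J,i)}\bigr)=\frac{q_{ii}}{1-q_{ii}}\sum_{k:\,j_k=i}\Bigl(\prod_{m>k}q_{j_m i}\Bigr)\,\tau_{J'\setminus k}(e_J),
\end{equation*}
and this you assert ``amounts to the recursive formula for the $\tau_{J'}$'' without proving it. Verifying it means unwinding the $(l{-}1)$-fold coproduct of $e_{(J,i)}$, observing that only bijective assignments of letters to tensor slots survive, and tracking the commutation factors $q_{j_m i}$ produced when the $\om_i$ thrown off by the last letter (via $\De(e_i)=e_i\ot1+\om_i\ot e_i$) and the $\om_{j_m}$'s from letters in later slots pass the $e$'s --- in other words, it is essentially as hard as the lemma itself. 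The recursion is in fact true, so your route can be completed, but as written the proposal has a genuine gap at its crux. You could also avoid the multi-letter combinatorics entirely by inducting on the length of $y$ instead of $x_2$, using the first identity you already have: the base cases $y=\om'_\mu$ and $y=f_i$ are one-line checks from the definitions of $k_\mu$ and $\tau_i$ together with $\De(f_i)=1\ot f_i+f_i\ot\om_i'$ (only $x_1x_2$ of weight $0$ or $\al_i$ pair nontrivially), and the step from $y_1,y_2$ to $y_1y_2$ is purely formal: expand $\lg y_1y_2,x_1x_2\rg=\lg y_1\ot y_2,\De(x_1)\De(x_2)\rg$, apply the hypothesis to each factor, and reassemble by coassociativity --- this is the standard organization of the argument and requires no computation with general words $J'$.
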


\begin{lem}\label{rad0}
For all $x\in U_{\q}^{\geq0}$ and  $i\neq j\in I$, we have $\lg
u_{ij}^-,\,x\rg=0.$
\end{lem}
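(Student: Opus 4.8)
The plan is to prove that the quantum Serre element $u_{ij}^-$ pairs trivially with all of $U_{\q}^{\geq0}$ by reducing to a few generators and exploiting the multiplicativity of the pairing established in the preceding lemma. First I would observe that, by the weight-grading property recorded just above, $\lg u_{ij}^-,\,x\rg=0$ automatically unless $x\in (U_{\q}^+)_{\beta}$ where $\beta=\deg(u_{ij}^-)$ in absolute value, i.e.\ $\beta=(1-a_{ij})\al_i+\al_j$; so it suffices to test $u_{ij}^-$ against a spanning set of $(U_{\q}^+)_{\beta}$, which consists of ordered monomials in the $e_k$ of total degree $(1-a_{ij})\al_i+\al_j$. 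Since the only simple roots occurring are $\al_i$ and $\al_j$ (with $\al_j$ appearing exactly once), every such monomial has the form $e_i^{a}e_je_i^{b}$ with $a+b=1-a_{ij}$.

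The key step is then to compute $\lg u_{ij}^-,\,e_i^{a}e_je_i^{b}\rg$ directly using the coproduct-pairing adjunction $\lg y,\,x_1x_2\rg=\lg \De(y),\,x_2\ot x_1\rg$ from the lemma above. Here I would iterate this to express the pairing of $u_{ij}^-$ against a length-$(2-a_{ij})$ product in terms of the iterated coproduct $\De^{(1-a_{ij})}(u_{ij}^-)$ evaluated on the tensor factors. Because the pairing vanishes on mismatched weights and because $\tau_k$ is supported only on the single simple-root piece $e_k$, essentially the only surviving contributions come from distributing the tensor factors so that each receives exactly one generator. This turns the computation into the explicit combinatorial identity defining $u_{ij}^-$: the alternating sum $\sum_{k}(-1)^k\binom{1-a_{ij}}{k}_{q_{ii}}q_{ii}^{k(k-1)/2}q_{ij}^{k}$ paired against the various orderings collapses, via the $v$-binomial identities (2)--(5) in the excerpt and the relation $q_{ij}q_{ji}=q_{ii}^{a_{ij}}$, to zero.

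An alternative and cleaner route, which I would prefer to carry out, avoids the iterated coproduct entirely. Recall that the pairing is defined so that $\lg f_k,\,e_k\rg=\frac{q_{kk}}{1-q_{kk}}$ and $\lg f_k,\,\om_\mu\rg$, more precisely through $\psi(f_J\om'_\mu)=\tau_J k_\mu$; thus the restricted pairing realizes $\widetilde U_{\q}^-$ acting on $U_{\q}^+$ via the $\tau_k$-type operators, which behave as twisted skew-derivations. Concretely, pairing with $f_k$ implements a $q$-derivation $\pa_k$ on $U_{\q}^+$ satisfying a twisted Leibniz rule with scalars drawn from the $q_{\mu\nu}$. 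Under this dictionary, $\lg u_{ij}^-,\,x\rg=0$ for all $x$ is equivalent to the statement that the Serre combination of the operators $\pa_i,\pa_j$ annihilates $U_{\q}^+$, which is in turn the $q$-difference shadow of the $\q$-Serre relation itself. The whole claim then follows from the fact, already used implicitly in Lemma \ref{auto}, that the $\q$-Serre relations are symmetric under the interchange $i\leftrightarrow j$ combined with $q_{ij}\leftrightarrow q_{ji}$.

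The main obstacle is bookkeeping the parameters correctly: the skew-derivation produced by pairing with $f_i$ carries twisting scalars $q_{\mu\al_i}$ (or its inverse) that depend on the weight of the argument it passes, and one must verify that the $q_{ii}$-binomial coefficients appearing in $u_{ij}^-$ match exactly the scalars generated when $\pa_i$ sweeps across a string $e_i^{a}e_je_i^{b}$. The relation $q_{ij}q_{ji}=q_{ii}^{a_{ij}}$ is what makes the $q_{ij}^k$ factor in $u_{ij}^-$ cancel against the accumulated twists, so I expect the delicate point to be checking that the sign-weighted sum telescopes precisely via identity (4) of the excerpt; once that single combinatorial cancellation is in hand, the vanishing on all of $U_{\q}^{\geq0}$ is immediate from the weight restriction.
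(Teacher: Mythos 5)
Your reduction by weight grading to monomials $e_i^{a}e_je_i^{b}$ with $a+b=1-a_{ij}$ agrees with the paper's first step, and you correctly identify the adjunction $\lg y,\,x_1x_2\rg=\lg\De(y),\,x_2\ot x_1\rg$ as the engine. But both of your routes omit, or wrongly claim to avoid, the one genuinely nontrivial input. The paper's proof rests on Lemma \ref{primi}: in $\widetilde{U}_{\q}$ the Serre element is \emph{skew-primitive}, $\De(u_{ij}^-)=u_{ij}^-\ot\om_i'^{1-a_{ij}}\om_j'+1\ot u_{ij}^-$. Granting this, a \emph{single} application of the adjunction to $e_J=e_\ga e_{J'}$ gives $\lg u_{ij}^-,\,e_J\rg=\lg u_{ij}^-,\,e_{J'}\rg\lg\om_i'^{1-a_{ij}}\om_j',\,e_\ga\rg+\lg 1,\,e_{J'}\rg\lg u_{ij}^-,\,e_\ga\rg$, and each term dies for trivial reasons (a Cartan element pairs to zero with $e_\ga$, and $\lg 1,\,e_{J'}\rg=0$ since $|J'|\neq0$): no cancellation whatsoever happens at the pairing stage, because all the combinatorics is quarantined in the Appendix A computation of $\De(u_{ij}^\pm)$. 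Your Route 1 instead expands the full iterated coproduct $\De^{(1-a_{ij})}(u_{ij}^-)$ and asserts that the resulting alternating sum ``collapses to zero'' via identities (2)--(5) and $q_{ij}q_{ji}=q_{ii}^{a_{ij}}$. That collapse is precisely the hard content of the lemma --- it is of the same difficulty as the Appendix A computation --- and you never exhibit it, so the plan has a hole exactly where the lemma lives.

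Route 2, which you say you prefer, has a sharper defect: its final deduction is a non sequitur. It is true that pairing against $f_i$ dualizes to the skew-derivations, so the lemma is equivalent to the operator identity $\sum_{k}(-1)^k\binom{1-a_{ij}}{k}_{q_{ii}}q_{ii}^{k(k-1)/2}q_{ij}^{k}\,({}_i\pa)^{1-a_{ij}-k}\,{}_j\pa\,({}_i\pa)^{k}=0$ on $U_{\q}^+$. But this is \emph{not} a formal consequence of the Serre relations holding in $U_{\q}^+$: the functional $\psi(u_{ij}^-)$ lives in the convolution algebra $(U_{\q}^{\geq0})^*$, and its vanishing on the $(1-a_{ij})$-dimensional space $(U_{\q}^+)_\be$ amounts to the value vector on the monomials $e_i^ae_je_i^b$ being proportional to the Serre covector --- a genuine computation, not a symmetry statement. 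The interchange symmetry of Lemma \ref{auto} (order reversal at the price of $q_{ij}\leftrightarrow q_{ji}$) does not produce it. The only way to get it formally is to realize ${}_i\pa$ as the adjoint of left multiplication $L_{e_i}$ with respect to a form satisfying $(e_ix,y)=(x,{}_i\pa y)$ and take adjoints of the Serre identity for the $L_{e_i}$; but in this paper that form (Proposition \ref{kashi}) is \emph{built from} the pairing $\lan\,,\,\ran_{\q}$, whose descent from $\widetilde{U}_{\q}^{\leq0}$ to $U_{\q}^{\leq0}$ is exactly what Lemma \ref{rad0} underwrites --- so at this stage of the development your shortcut is circular. Making Route 2 honest forces you to verify the operator identity by hand (e.g., commuting the combination past the $L_{e_k}$ and checking the delta terms cancel via identity (5)), i.e., to redo the same $q$-binomial computation. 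The missing idea, in short, is the skew-primitivity of $u_{ij}^-$: prove the coproduct formula once, and the pairing argument becomes two lines with no cancellation to track.
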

\begin{proof}
It suffices to prove $\lg u_{ij}^-,\,e_J\rg=0$ with $|J|=(1-a_{ij})\al_i+\al_j$ .
We have $J=(\ga,J')$ with $\ga\in\{\al_i,\al_j\}$ where $J'$ is the sequence with $|J'|=|J|-\ga$.
Hence $|J|\neq |J'|$ and $|J'|\neq0$. Then, by Lemma \ref{primi},
\begin{eqnarray*}
\lg u_{ij}^-,\,e_J\rg&=&\lg \Delta(u_{ij}^-),\,e_{J'}\ot e_{\ga}\rg\\
&=&\lg u_{ij}^-\ot \om_i'^{1-a_{ij}}\om_{j}' +1\ot u_{ij}^-,\,e_{J'}\ot e_{\ga}\rg\\
&=&\lg u_{ij}^-,\,e_{J'}\rg\lg \om_i'^{1-a_{ij}}\om_{j}',\,e_\ga\rg+\lg 1,\,e_{J'}\rg\lg u_{ij}^-,\,e_\ga\rg\\
&=&0.
\end{eqnarray*}

This completes the proof.
\end{proof}

\begin{theo}\label{skew1} There exists a unique bilinear pairing
$\lan\,,\,\ran_{\q}:\, U_{\q}^{\leq0}\times U_{\q}^{\geq0}\to \bK$
such that for all $x,\, x'\in U_{\q}^{\geq0}$, $y,\, y'\in
U_{\q}^{\leq0}$, $\mu,\, \nu\in Q$, and $i,\, j\in I$
\begin{eqnarray*}
&&\lg y,\,xx' \rg_{\q}=\lg\De(y),\,x'\ot x\rg_{\q},\\
&&\lg yy',\,x \rg_{\q}=\lg y\ot y',\,\De(x)\rg_{\q},\\
&&\lg f_i,\,e_j\rg_{\q}=\delta_{ij}\frac{q_{ii}}{1-q_{ii}},\\
&& \lg\om_{\mu}',\,\om_{\nu}\rg_{\q}=q_{\nu\mu},\\
&& \lg\om_{\mu}',\,e_i\rg_{\q}=0,\\
&& \lg f_i,\,\om_{\mu}\rg_{\q}=0.
\end{eqnarray*}
\end{theo}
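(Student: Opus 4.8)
The plan is to descend the bilinear pairing $\langle\,,\,\rangle$ already constructed on $\widetilde{U}_{\q}^{\leq0}\times U_{\q}^{\geq0}$ to the honest quotient $U_{\q}^{\leq0}=\widetilde{U}_{\q}^{\leq0}/I^-$, where $I^-$ denotes the two-sided ideal of $\widetilde{U}_{\q}^{\leq0}$ generated by the Serre elements $u_{ij}^-$ $(i\neq j)$. Since the right-hand factor $U_{\q}^{\geq0}$ already carries its Serre relations (the functionals $\tau_J k_\mu$ were built as elements of $(U_{\q}^{\geq0})^*$), the only issue for existence is to descend the left factor, i.e. to prove that the left radical
$$
N=\{\,y\in\widetilde{U}_{\q}^{\leq0}\mid \langle y,x\rangle=0\ \text{for all}\ x\in U_{\q}^{\geq0}\,\}
$$
contains $I^-$.

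First I would observe that $N$ is automatically a two-sided ideal. Applying the Hopf-pairing identity $\langle yy',x\rangle=\langle y\ot y',\De(x)\rangle$ from the preceding lemma twice, for $w\in N$ and arbitrary $z,z'$ one gets, in Sweedler notation for the iterated coproduct of $x\in U_{\q}^{\geq0}$,
$$
\langle zwz',x\rangle=\sum \langle z,x_{(1)}\rangle\,\langle w,x_{(2)}\rangle\,\langle z',x_{(3)}\rangle=0,
$$
every summand carrying the factor $\langle w,x_{(2)}\rangle=0$. By Lemma \ref{rad0} each generator $u_{ij}^-$ lies in $N$, hence $I^-\subseteq N$, so $\langle\,,\,\rangle$ factors through $U_{\q}^{\leq0}\times U_{\q}^{\geq0}$ and yields the desired pairing $\langle\,,\,\rangle_{\q}$.

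It then remains to check the six listed identities. The first two are precisely the Hopf-pairing axioms, which pass to the quotient because they already hold for $\langle\,,\,\rangle$ by the earlier lemma. The last four are read off from the explicit formula $\langle f_{J'}\om_\mu',\,e_J\om_\nu\rangle=\tau_{J'}k_\mu(e_J\om_\nu)$ together with the definitions of $\tau_i$ and $k_\mu$: taking $J'=(\al_i)$, $J=(\al_j)$, $\mu=\nu=0$ and expanding $\tau_i k_0$ against $\De(e_j)=e_j\ot 1+\om_j\ot e_j$ gives $\langle f_i,e_j\rangle_{\q}=\delta_{ij}\frac{q_{ii}}{1-q_{ii}}$, while the weight bookkeeping of $\tau_J$ and $k_\mu$ yields $\langle\om_\mu',\om_\nu\rangle_{\q}=q_{\nu\mu}$, $\langle\om_\mu',e_i\rangle_{\q}=0$ and $\langle f_i,\om_\mu\rangle_{\q}=0$.

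Finally, uniqueness holds because the six conditions fix the pairing on the algebra generators $f_i,\om_i'^{\pm1}$ and $e_i,\om_i^{\pm1}$, and the two Hopf-pairing axioms then determine $\langle y,x\rangle_{\q}$ on arbitrary products by a double induction on the lengths of $y$ and $x$; since $U_{\q}^{\leq0}$ and $U_{\q}^{\geq0}$ are generated by these elements, any two pairings with the listed properties agree. The one genuinely nontrivial point, where the Serre relations enter, is the inclusion $I^-\subseteq N$; it rests entirely on Lemma \ref{rad0}, which in turn depends on the skew-primitivity of $u_{ij}^-$ established in Lemma \ref{primi}. Everything else is routine descent and bookkeeping.
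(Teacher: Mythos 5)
Your proposal is correct and follows essentially the same route as the paper: the paper also descends the pairing from $\widetilde{U}_{\q}^{\leq0}$ to the quotient by the ideal generated by the $u_{ij}^-$, using Lemma \ref{rad0}. The only cosmetic difference is that the paper packages the descent through the algebra homomorphism $\psi:\widetilde{U}_{\q}^{\leq0}\to (U_{\q}^{\geq0})^*$, whose multiplicativity (equivalent to your identity $\lan yy',x\ran=\lan y\ot y',\De(x)\ran$) makes its kernel automatically a two-sided ideal containing the $u_{ij}^-$ --- exactly your argument that the left radical $N$ is an ideal, so the two proofs coincide in substance.
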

\begin{proof}
Since $U_{\q}^{\leq0}$ is isomorphic to $\widetilde{U}_{\q}^{\leq0}$
modulo the ideal generated by $u_{ij}^-$ for any $i\ne j$, and by
Lemma \ref{rad0}, we have a homomorphism $ \bar{\psi}:\
U_{\q}^{\leq0}\lra (U_{\q}^{\geq0})^*. $ Then we get a bilinear
pairing of $U_{\q}^{\leq0}$ and $U_{\q}^{\geq0}$ via $ \lg
y,x\rg_{\q}=\bar{\psi}(y)(x). $ It is easy to see that the pairing
satisfies all the properties as desired.
\end{proof}

For any two Hopf algebras $ A$ and $ B$ paired by a skew-dual
pairing $\lg\,,\rg$, one may consider the Drinfeld double
construction $ \mathcal{D}(A, B, \lan,\ran)$, which is a Hopf
algebra whose underlying vector space is $ A\otimes B$ with the
tensor product coalgebra structure and the algebra structure defined
by
$$
(a\ot b)(a'\ot b')=\sum \lg S_{B}(b_{(1)}), a'_{(1)}\rg\lg
b_{(3)},a'_{(3)}\rg \,aa'_{(2)}\ot b_{(2)}b',
$$
for $a, a'\in A$ and $b, b'\in B$, and whose antipode $S$ is given
by
$$
S(a\ot b)=(1\ot S_{B}(b))(S_{A}(a)\ot 1).
$$
Therefore we have
\begin{coro}  $U_{\q}(\mg)$
is isomorphic to the Drinfeld double $\mathcal{D}(U_{\q}^{\geq0},U_{\q}^{\leq0},\lan,\ran_{\q})$.
\end{coro}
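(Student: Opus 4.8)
The plan is to realize the isomorphism through the canonical multiplication map, its inverse coming from the universal property of $U_{\q}(\mg)$; the substance is a verification that the cross relations of the double reproduce $(R1),(R3),(R4),(R5)$. First I record that $U_{\q}^{\geq0}$ and $U_{\q}^{\leq0}$ are Hopf subalgebras of $U_{\q}$: the coproduct of Proposition \ref{Hopf} closes on each, since $\De(e_i)=e_i\ot1+\om_i\ot e_i$ and $\De(f_i)=1\ot f_i+f_i\ot\om_i'$, and so do the counit and antipode. Together with the skew-Hopf pairing $\lan\,,\,\ran_{\q}$ of Theorem \ref{skew1}, the double $\mathcal D:=\mathcal D(U_{\q}^{\geq0},U_{\q}^{\leq0},\lan\,,\,\ran_{\q})$ is then a well-defined Hopf algebra by the general theory of \cite{DT,Ma}. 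Note that $a\mapsto a\ot1$ and $b\mapsto 1\ot b$ embed $U_{\q}^{\geq0}$ and $U_{\q}^{\leq0}$ as subalgebras of $\mathcal D$, and that the product formula gives $(a\ot1)(1\ot b)=a\ot b$.

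The key step is to check that the generators $e_i\ot1,\ 1\ot f_i,\ \om_i^{\pm1}\ot1,\ 1\ot\om_i'^{\pm1}$ of $\mathcal D$ satisfy $(R1)$--$(R7)$. The relations internal to $U_{\q}^{\geq0}$ (half of $(R2)$, the first half of $(R3)$, and $(R6)$) and to $U_{\q}^{\leq0}$ (the other half of $(R2)$, the second half of $(R4)$, and $(R7)$) hold automatically, these being subalgebras. For the mixed relations I compute the double product of a $U_{\q}^{\leq0}$-generator against a $U_{\q}^{\geq0}$-generator via $(a\ot b)(a'\ot b')=\sum\lan S(b_{(1)}),a'_{(1)}\ran\lan b_{(3)},a'_{(3)}\ran\,aa'_{(2)}\ot b_{(2)}b'$, feeding in the iterated coproducts $\De^2(f_i)=1\ot1\ot f_i+1\ot f_i\ot\om_i'+f_i\ot\om_i'\ot\om_i'$ and $\De^2(e_j)=e_j\ot1\ot1+\om_j\ot e_j\ot1+\om_j\ot\om_j\ot e_j$, together with the pairing values $\lan f_i,e_j\ran_{\q}=\delta_{ij}\frac{q_{ii}}{1-q_{ii}}$, $\lan\om_\mu',\om_\nu\ran_{\q}=q_{\nu\mu}$, $\lan\om_\mu',e_i\ran_{\q}=0=\lan f_i,\om_\mu\ran_{\q}$, and the antipode $S(f_i)=-f_i\om_i'^{-1}$. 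The torus cases yield $(R1)$ and the mixed halves of $(R3)$--$(R4)$ at once; the crucial computation is
\[
(1\ot f_i)(e_j\ot1)=(e_j\ot f_i)+\delta_{ij}\frac{q_{ii}}{1-q_{ii}}\bigl(\om_j\ot1-1\ot\om_i'\bigr),
\]
which, combined with $(e_j\ot1)(1\ot f_i)=e_j\ot f_i$, reproduces precisely $[\,e_i\ot1,\,1\ot f_j\,]=\delta_{ij}\frac{q_{ii}}{q_{ii}-1}(\om_i\ot1-1\ot\om_i')$, i.e.\ $(R5)$. Bookkeeping the nine coproduct terms (most of which vanish by the orthogonality $\lan\om_\mu',e_i\ran_{\q}=0=\lan f_i,\om_\mu\ran_{\q}$) and the antipode sign is the one delicate point of the proof; everything else is formal.

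With $(R1)$--$(R7)$ verified in $\mathcal D$, the universal property of $U_{\q}$ furnishes an algebra homomorphism $\Theta:U_{\q}\to\mathcal D$ sending the generators as above. I then compare $\Theta$ with the linear map $m:\mathcal D\to U_{\q}$, $a\ot b\mapsto ab$. Since $\Theta$ restricts on $U_{\q}^{\geq0}$ and $U_{\q}^{\leq0}$ to the two subalgebra embeddings, $\Theta(ab)=\Theta(a)\Theta(b)=(a\ot1)(1\ot b)=a\ot b$, so $\Theta\circ m=\id_{\mathcal D}$ and $m$ is injective. Surjectivity of $m$ is the spanning half of the triangular decomposition: using $(R1)$--$(R5)$ one straightens any monomial in the generators into a sum of products with left factor in $U_{\q}^{\geq0}$ and right factor in $U_{\q}^{\leq0}$, whence $U_{\q}=U_{\q}^{\geq0}\,U_{\q}^{\leq0}=\operatorname{im}m$. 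Thus $m$ is bijective and $\Theta=m^{-1}$ is an algebra isomorphism. Finally $\Theta$ is a coalgebra map, since $\De_{\mathcal D}\circ\Theta$ and $(\Theta\ot\Theta)\circ\De$ are both algebra homomorphisms $U_{\q}\to\mathcal D\ot\mathcal D$ and agree on the generators against the tensor-product coalgebra structure of $\mathcal D$ (for instance $\De_{\mathcal D}(e_i\ot1)=(e_i\ot1)\ot(1\ot1)+(\om_i\ot1)\ot(e_i\ot1)$ equals $(\Theta\ot\Theta)\De(e_i)$). Therefore $\Theta$ is the desired Hopf algebra isomorphism $U_{\q}(\mg)\cong\mathcal D(U_{\q}^{\geq0},U_{\q}^{\leq0},\lan\,,\,\ran_{\q})$.
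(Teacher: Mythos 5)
Your proof is correct and follows essentially the paper's (implicit) argument: the paper states this corollary without proof, as an immediate consequence of Theorem \ref{skew1} and the recalled double construction, and your computations---verifying $(R1)$--$(R7)$ on the generators $e_i\ot1$, $1\ot f_i$, $\om_i^{\pm1}\ot1$, $1\ot\om_i'^{\pm1}$ of $\mathcal{D}$ (including the delicate nine-term check of $(R5)$), invoking the universal property to get $\Theta$, and showing $\Theta\circ m=\id$ together with surjectivity of the multiplication map---are precisely the standard details being suppressed, as carried out in the two-parameter setting in \cite{BW1, BGH1}. One point worth commending: by proving the spanning statement $U_{\q}=U_{\q}^{\geq0}\,U_{\q}^{\leq0}$ directly by straightening with $(R1)$--$(R5)$, rather than quoting Corollary \ref{tri2}, you correctly avoid a circularity, since the paper deduces the triangular decomposition \emph{from} this corollary.
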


\subsection{Triangular decomposition}  By the same argument as Coro. 2.6 in
\cite{BGH1}, we have
\begin{coro}\label{tri2}
$U_{\q}(\mg)$ has a triangular decomposition:
\begin{eqnarray*}
U_{\q}(\frak g)\simeq U_{\q}^-\ot U_{\q}^0\ot U_{\q}^+.
\end{eqnarray*}
\end{coro}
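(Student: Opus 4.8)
The plan is to let the Drinfeld double realization carry the injectivity, so that the only real work is (i) an elementary straightening for surjectivity, (ii) unpacking the internal smash-product structure of the two Borel halves and the freeness of the torus, and (iii) a transport of structure via the automorphism $\Phi$ to land on the stated ordering. Write $m$ for the multiplication map $U_{\q}^-\ot U_{\q}^0\ot U_{\q}^+\to U_{\q}(\mg)$.

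Surjectivity of $m$ uses none of the Serre relations. Since $U_{\q}(\mg)$ is generated by $e_i,f_i,\om_i^{\pm1},\om_i'^{\pm1}$, it is enough to rewrite any monomial in the generators in the ordered shape (a word in the $f$'s)(a word in the torus)(a word in the $e$'s). Relations $(R1)$--$(R4)$ push every $\om_i^{\pm1}$ and $\om_i'^{\pm1}$ into the middle up to scalars, and $(R5)$ replaces $e_if_j$ by $f_je_i+\delta_{ij}\frac{q_{ii}}{q_{ii}-1}(\om_i-\om_i')$, strictly lowering the number of $(e,f)$-inversions; iterating terminates and writes every element as a $\bK$-combination of monomials $f\,h\,e$ with $f\in U_{\q}^-,\ h\in U_{\q}^0,\ e\in U_{\q}^+$.

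For injectivity I would first treat the opposite ordering. By the Corollary above, $U_{\q}(\mg)\simeq\mathcal{D}(U_{\q}^{\geq0},U_{\q}^{\leq0},\lan,\ran_{\q})$, and by construction the underlying space of the double is $U_{\q}^{\geq0}\ot U_{\q}^{\leq0}$ with the two factors embedded so that their product is exactly the multiplication; hence multiplication $U_{\q}^{\geq0}\ot U_{\q}^{\leq0}\to U_{\q}(\mg)$ is a linear isomorphism. This is precisely the step that makes the $e$/$f$ interaction free, and it is handed to us by the double. It then remains to refine each factor: each Borel is the smash product of its commutative group algebra with the corresponding nilpotent part, the action being read off from $(R3)$, $(R4)$, so $U_{\q}^{\geq0}\simeq U_{\q}^+\ot U_{\q}^{+0}$ and $U_{\q}^{\leq0}\simeq U_{\q}^{-0}\ot U_{\q}^-$ as vector spaces. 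The one point needing care is freeness of the torus: the $\om_\mu\ (\mu\in Q)$ are linearly independent, for if $\sum_\mu c_\mu\om_\mu=0$ then pairing against $\om_\nu'$ and invoking $\lan\om_\nu',\om_\mu\ran_{\q}=q_{\mu\nu}$ from Theorem \ref{skew1} gives $\sum_\mu c_\mu q_{\mu\nu}=0$ for all $\nu\in Q$, forcing $c_\mu=0$ since the distinct group characters $\nu\mapsto q_{\mu\nu}$ are linearly independent; the same on the $U_{\q}^{\leq0}$-side, together with $(R1)$ and the double identification $\om_\nu\om_\mu'=\om_\nu\ot\om_\mu'$, yields $U_{\q}^0\simeq U_{\q}^{+0}\ot U_{\q}^{-0}$. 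Assembling, the multiplication $U_{\q}^+\ot U_{\q}^0\ot U_{\q}^-\to U_{\q}(\mg)$ is a linear isomorphism.

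Finally I would transport this to the asserted ordering using the $\tau$-linear algebra automorphism $\Phi$ of Lemma \ref{auto}(1), which sends $e_i\mapsto f_i$, $f_i\mapsto e_i$, $\om_i\mapsto\om_i'$, $\om_i'\mapsto\om_i$, and hence restricts to bijections $U_{\q}^+\to U_{\q}^-$, $U_{\q}^0\to U_{\q}^0$, $U_{\q}^-\to U_{\q}^+$. Being an algebra automorphism it preserves products in order, so applying $\Phi$ to a relation $\sum f_kh_ke_k=0$ (with $f_k\in U_{\q}^-,\ h_k\in U_{\q}^0,\ e_k\in U_{\q}^+$) produces $\sum\Phi(f_k)\Phi(h_k)\Phi(e_k)=0$ of the already-settled $U_{\q}^+ U_{\q}^0 U_{\q}^-$ shape; injectivity there forces the corresponding tensor to vanish, and since $\Phi\ot\Phi\ot\Phi$ is a ($\tau$-semilinear, hence additively injective) bijection $U_{\q}^-\ot U_{\q}^0\ot U_{\q}^+\to U_{\q}^+\ot U_{\q}^0\ot U_{\q}^-$, the original tensor vanishes too. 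Thus $m$ is injective, and with the surjectivity above it is the desired isomorphism. I expect the genuine obstacle to be the torus-freeness bookkeeping and verifying that the two Borel smash factorizations are honestly direct, since the hard part, bijectivity across the $e$/$f$ divide, is supplied for free by the Drinfeld double.
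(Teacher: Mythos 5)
Your proposal is correct and takes essentially the same route as the paper, whose entire proof is the citation of Corollary 2.6 in \cite{BGH1}: there, as here, the Drinfeld double realization $U_{\q}(\mg)\simeq\mathcal{D}(U_{\q}^{\geq0},U_{\q}^{\leq0},\lan\,,\,\ran_{\q})$ supplies the linear isomorphism $U_{\q}^{\geq0}\ot U_{\q}^{\leq0}\simeq U_{\q}(\mg)$ across the $e$/$f$ divide, after which each Borel half is factored through its torus. Your additions --- the straightening argument for surjectivity, the character-independence proof of torus freeness, and the transport by the $\tau$-linear automorphism $\Phi$ to the ordering $U_{\q}^-\ot U_{\q}^0\ot U_{\q}^+$ (legitimate, since a $\tau$-semilinear bijection preserves linear independence) --- are all sound, though the asserted smash factorization $U_{\q}^{\geq0}\simeq U_{\q}^+\ot U_{\q}^{+0}$ still needs its short justification beyond torus freeness alone, e.g.\ by applying $\De$ to a homogeneous relation $\sum_k x_k\om_{\mu_k}=0$ with $x_k\in (U_{\q}^+)_{\be}$ and projecting onto the (degree $0$)$\,\ot\,$(degree $\be$) component, which isolates $\sum_k \om_{\be+\mu_k}\ot x_k\om_{\mu_k}=0$ and, by independence of the $\om_{\be+\mu_k}$, forces each $x_k=0$.
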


\subsection{Hopf $2$-cocycle deformation}
Let $(H,m,\De,1,\vep,S)$ be a Hopf algebra over a field $F$. The
bilinear form $\si: H\times H\to F$ is called a (left) Hopf
$2$-cocycle of $H$ if
\begin{eqnarray}
&&\si(a,1)=\si(1,a)=\vep(a),\qquad\forall\  a\in H,\label{co1}\\
&&\sum \si(a_1,b_1)\si(a_2b_2,c)=\sum
\si(b_1,c_1)\si(a,b_2c_2),\qquad\forall\ a,b,c\in H.\label{co2}
\end{eqnarray}
Let $\si$ be a Hopf $2$-cocycle on $(H,m,\De,1,\vep,S)$, $\si^{-1}$
the inverse of $\si$ under the convolution product. So, by
\cite{DT}, we can construct a new Hopf algebra
$(H^{\si},m^{\si},\De,1,\vep$, $ S^{\si})$, where $H=H^{\si}$ as
coalgebras, and
\begin{eqnarray}
m^{\si}(a\otimes b)=\sum
\si(a_1,b_1)a_2b_2\si^{-1}(a_3,b_3),\quad\forall\; a, \,b\in H,\\
%\end{eqnarray}
%\begin{eqnarray}
S^{\si}(a)=\sum
\si^{-1}(a_1,S(a_2))S(a_3)\si(S(a_4),a_5),\quad\forall\; a\in H.
\end{eqnarray}
$H$ and $H^{\si}$ are called {\it twisted-equivalent}.

Consider the (standard) one-parameter quantum group
$U_{q,q^{-1}}(\mg_A)$ generated by $E_i, F_i$, $K_i^{\pm1}$ and
$K_i'^{\pm1}(i\in I)$ and satisfying the same relations as those in
Definition \ref{defi} except that
$e_i,f_i,\om_i^{\pm1},\om_i'^{\pm1}$ and $q_{ij}$ are replaced by
$E_i,F_i, K_i^{\pm1},K_i'^{\pm1}$ and $q^{d_ia_{ij}}$, respectively.

Assume $q_{ii}=q^{2d_i}\ (i\in I)$.  Next we shall show that
$U_{q,q^{-1}}(\mg_A)$ is twisted-equivalent to $U_{\q}(\mg_A)$.
\begin{pro}\label{hof}
Let $\si: U_{q,q^{-1}}(\mg_A)\times U_{q,q^{-1}}(\mg_A)\to \bK$ be a
bilinear form on $U_{q,q^{-1}}(\mg_A)$ defined by
$$
\si(x,y)=\begin{cases}&q_{\mu\nu}^{\frac{1}{2}},\,\quad x=K_{\mu}\
\text{ or }\ K_{\mu}',\quad y=K_{\nu}\ \text{ or }\
K_{\nu}',\\
&0,\quad\quad \text{otherwise}.
\end{cases}
$$
Then $\si$ is a Hopf $2$-cocycle of  $U_{q,q^{-1}}(\mg_A)$.
\end{pro}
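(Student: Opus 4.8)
The plan is to recognize $\si$ as the pullback of a bicharacter along a Hopf algebra projection, so that the cocycle identities \eqref{co1}--\eqref{co2} become automatic rather than something to be checked by hand. First I would make the structural content of the formula explicit. Let $T$ be the free abelian group generated by $K_i^{\pm1},K_i'^{\pm1}$ $(i\in I)$, so that the toral part is the group algebra $\bK T$; the recipe defining $\si$ is exactly the bicharacter $\be\colon T\times T\to\bK^{\times}$ determined on generators by $\be(K_i^{\epsilon},K_j^{\epsilon'})=q_{ij}^{1/2}$ for \emph{any} choice of primes $\epsilon,\epsilon'$ (the required half-powers lie in $\bK$ by hypothesis), extended by the prescription that $\si$ vanishes whenever one of its arguments is a PBW monomial $F\,K\,E$ carrying at least one $E_i$ or $F_i$. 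Indeed $\be(K_\mu,K_\nu)=\prod_{i,j}(q_{ij}^{1/2})^{\mu_i\nu_j}=q_{\mu\nu}^{1/2}$, matching the definition of $\si$.

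The decisive observation is that $\be$ does not distinguish $K_i$ from $K_i'$: since $\be(K_i,-)=\be(K_i',-)$ and symmetrically, one gets $\be(K_iK_i'^{-1},-)\equiv1\equiv\be(-,K_iK_i'^{-1})$, so $\be$ descends to a bicharacter $\ol{\be}$ on the quotient abelian group $\ol T=T/\lg K_iK_i'^{-1}\mid i\in I\rg$. On the group algebra $\bK\ol T$ a bicharacter is automatically a Hopf $2$-cocycle: all elements of $\ol T$ are grouplike, so both sides of \eqref{co2} collapse to $\ol\be(g,h)\,\ol\be(g,k)\,\ol\be(h,k)$, while \eqref{co1} is just $\ol\be(g,1)=1$.

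Next I would promote the assignment $E_i,F_i\mapsto0$ and $K_i,K_i'\mapsto\ol K_i$ to a \emph{Hopf algebra} homomorphism $\ol\pi\colon U_{q,q^{-1}}(\mg_A)\to\bK\ol T$. Verifying that $\ol\pi$ respects the defining relations is routine for $(R1)$--$(R4)$, $(R6)$--$(R7)$, where every term contains some $E_i$ or $F_i$ and hence dies; the one relation with real content is $(R5)$, whose right-hand side is proportional to $K_i-K_i'$ and therefore maps to $0$ precisely because $K_i$ and $K_i'$ are identified in $\ol T$. That $\ol\pi$ is also a coalgebra map is immediate on generators from the coproducts in Proposition \ref{Hopf}, so $\ol\pi$ is a morphism of Hopf algebras, and by construction $\si=\ol\be\circ(\ol\pi\ot\ol\pi)$.

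Finally I would invoke functoriality: the pullback of a Hopf $2$-cocycle along a Hopf algebra morphism is again a Hopf $2$-cocycle. Writing $A=\ol\pi(a)$, $B=\ol\pi(b)$, $C=\ol\pi(c)$ and using that $\ol\pi$ is simultaneously multiplicative and comultiplicative, the left side of \eqref{co2} for $\si$ becomes $\sum\ol\be(A_1,B_1)\,\ol\be(A_2B_2,C)$ and the right side becomes $\sum\ol\be(B_1,C_1)\,\ol\be(A,B_2C_2)$, which agree by the cocycle identity for $\ol\be$ on $\bK\ol T$; condition \eqref{co1} transports in the same way. I expect the only genuine obstacle to be exactly the point that forces the quotient: the naive projection onto $\bK T$ is \emph{not} an algebra map, again because of $(R5)$ (for instance $E_iF_i\mapsto\frac{q_{ii}}{q_{ii}-1}(K_i-K_i')\neq0$), so one cannot pull back along it directly, and passing to $\ol T$ is what simultaneously restores multiplicativity of the projection and allows $\be$ to descend. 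Should one prefer to bypass $\ol\pi$ entirely, a more pedestrian route is to expand \eqref{co2} against a PBW basis and observe that the only surviving Sweedler legs are grouplike while the $(R5)$ cross-terms $K_i-K_i'$ are annihilated by $\be$; this succeeds but is considerably messier.
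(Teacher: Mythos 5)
Your proof is correct, but it takes a genuinely different and more structural route than the paper's. The paper argues directly on homogeneous elements: when $x,y,z\in U^0_{q,q^{-1}}$ the conditions \eqref{co1}--\eqref{co2} reduce to the bicharacter identity checked by hand, and when some argument lies outside $U^0_{q,q^{-1}}$ it shows that every Sweedler leg contributing to either side of \eqref{co2} has a tensor factor of nonzero $Q$-degree, so both sides vanish term by term --- essentially the ``pedestrian route'' you sketch at the end. Your factorization $\si=\ol\be\circ(\ol\pi\ot\ol\pi)$ through the group algebra $\bK\ol T$, with $\ol T=T/\lg K_iK_i'^{-1}\rg$, buys three things the paper's verification does not make visible. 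First, it explains conceptually why the construction works at all: the only obstruction to the toral projection being an algebra map is $(R5)$, and passing to $\ol T$ kills exactly the term $K_i-K_i'$; this is the same mechanism that lets $\be$ descend. Second, it resolves a genuine ambiguity in the statement: read literally, $\si$ assigns $0$ to mixed monomials $K_\mu K_\nu'$ with $\mu,\nu\neq0$, and under that reading \eqref{co2} actually fails (take $a=K_i$, $b=K_j$, $c=K_k'$: the left side is $q_{ij}^{1/2}q_{(\al_i+\al_j)\al_k}^{1/2}\neq0$ while the right side contains $\si(K_i,K_jK_k')=0$); your bicharacter extension, under which $\si$ depends only on images in $\ol T$, is the unique reading making the proposition true, and it is the one the paper's proof uses implicitly when it says the toral case ``is easy to check.'' Third, since $\ol\be^{-1}$ pulls back the same way, you get convolution invertibility of $\si$ for free, which the subsequent twist theorem needs but the paper never addresses. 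All the individual steps you rely on --- that a bicharacter on a group algebra satisfies \eqref{co1}--\eqref{co2}, that $\ol\pi$ respects $(R1)$--$(R7)$ and the coproducts of Proposition \ref{Hopf}, and that cocycles pull back along Hopf algebra morphisms --- check out; the only shared caveat with the paper is that $q_{ij}^{1/2}\in\bK$ must be assumed for $\si$ to be defined at all, which the stated hypothesis $q_{ii}^{1/m}\in\bK$ does not literally supply for $i\neq j$.
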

\begin{proof}
Let $x,y,z$ be any homogenous elements in $U_{q,q^{-1}}(\mg_A)$. If
$x, y, z\in U_{q,q^{-1}}^0$, it is easy to check that the cocycle
conditions (\ref{co1}) and (\ref{co2}) hold. If $x\notin
U_{q,q^{-1}}^0$, then we can assume $\De(x)=a\otimes b+\cdots$ such
that $a\otimes b \notin U_{q,q^{-1}}^0\otimes U_{q,q^{-1}}^0$. Since
$a\notin U_{q,q^{-1}}^0$ and $by_2\notin U_{q,q^{-1}}^0$,
$$
\sum\si(a,y_1)\si(b,y_2z)=0.
$$
Hence, $ \sum\si(x_1,y_1)\si(x_2y_2,z)=0$. Since $x\notin
U_{q,q^{-1}}^0$,
$$
\sum\si(y_1,z_1)\si(x,y_2z_2)=0.
$$
Therefore, $\si$ also satisfies the cocycle conditions (\ref{co1})
and (\ref{co2}). Similarly, if $y$ or $z \notin U_{q,q^{-1}}^0$, we
can show that $\si$ satisfies the cocycle conditions.
\end{proof}

\begin{theo}
Let $\si$ be the Hopf $2$-cocycle defined in Proposition \ref{hof}.
Then we have the following Hopf algebra isomorphism:
$$
U_{\q}(\mg_A)\simeq U_{q,q^{-1}}^\si(\mg_A),
$$
where $U_{q,q^{-1}}^\si(\mg_A)$ is the Hopf algebra via the Hopf
$2$-cocycle deformation of $U_{q,q^{-1}}(\mg_A)$.
\end{theo}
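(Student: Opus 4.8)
The plan is to define the natural map on generators, prove it respects the defining relations by computing the twisted products explicitly, and then deduce bijectivity from the triangular decomposition. Throughout write $a\cdot_\si b:=m^\si(a\ot b)$. Since a Hopf $2$-cocycle deformation leaves the coalgebra, hence the underlying vector space, unchanged, $U_{q,q^{-1}}^\si(\mg_A)$ is generated under $\cdot_\si$ by the same $E_i,F_i,K_i^{\pm1},K_i'^{\pm1}$ carrying the unchanged coproduct $\De(E_i)=E_i\ot1+K_i\ot E_i$, $\De(F_i)=1\ot F_i+F_i\ot K_i'$ and grouplike $\De(K_i^{\pm1}),\De(K_i'^{\pm1})$. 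I define $\phi:U_{\q}(\mg_A)\to U_{q,q^{-1}}^\si(\mg_A)$ by $e_i\mapsto E_i$, $f_i\mapsto F_i$, $\om_i^{\pm1}\mapsto K_i^{\pm1}$, $\om_i'^{\pm1}\mapsto K_i'^{\pm1}$. As $\phi$ matches coproducts and counits of generators, it is automatically a bialgebra (hence Hopf) morphism once it is an algebra morphism; so the whole task reduces to checking $(R1)$--$(R7)$ for the product $\cdot_\si$.

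The computations rest on one observation: $\si(a,b)$ and $\si^{-1}(a,b)$ vanish unless both arguments are grouplike (lie in $U_{q,q^{-1}}^0$). Combined with the triangularity $\De(x)\in x\ot1+K_\beta\ot x+\bigoplus_{0<\ga<\beta}(U^+)_\ga U^0\ot(U^+)_{\beta-\ga}$ for homogeneous $x\in(U^+)_\beta$, this forces $K_\beta\ot x\ot1$ to be the unique term of $\De^{(2)}(x)$ with both outer legs grouplike (dually $1\ot u\ot K_\beta'$ for $u\in(U^-)_{-\beta}$, and $K_\mu^{\ot3}$ for torus elements). Reading off $m^\si$ then gives $E_i\cdot_\si E_j=q_{ij}^{1/2}E_iE_j$, $K_\mu\cdot_\si E_j=q_{\mu\al_j}^{1/2}K_\mu E_j$, $E_j\cdot_\si K_\mu=q_{\al_j\mu}^{1/2}E_jK_\mu$, $E_i\cdot_\si F_j=E_iF_j$, $F_j\cdot_\si E_i=F_jE_i$, the analogous negative formulas, and $\cdot_\si$ unchanged on the torus. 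Hence $(R1),(R2)$ are untouched; for $(R3)$, combining $K_i\cdot_\si E_j=q_{ij}^{1/2}q^{d_ia_{ij}}E_jK_i$ with $E_jK_i=q_{ji}^{-1/2}(E_j\cdot_\si K_i)$ and $q^{d_ia_{ij}}=(q_{ij}q_{ji})^{1/2}$ (valid since $q_{ij}q_{ji}=q_{ii}^{a_{ij}}$ and $q_{ii}=q^{2d_i}$) yields $K_i\cdot_\si E_j\cdot_\si K_i^{-1}=q_{ij}E_j$; $(R4)$ is the same computation with $F_j$; and $E_i\cdot_\si F_j=E_iF_j$, $F_j\cdot_\si E_i=F_jE_i$ leave $(R5)$ verbatim, the shared diagonal parameter being $q_{ii}=q^{2d_i}$.

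The crux is $(R6)$--$(R7)$. The product formula yields deformed powers $E_i^{\cdot_\si m}=q_{ii}^{m(m-1)/4}E_i^m$, and then, writing $N=1-a_{ij}$,
$$E_i^{\cdot_\si(N-k)}\cdot_\si E_j\cdot_\si E_i^{\cdot_\si k}=q_{ii}^{N(N-1)/4}\,q_{ij}^{(N-k)/2}q_{ji}^{k/2}\,E_i^{N-k}E_jE_i^{k},$$
the decisive feature being that the total exponent of $q_{ii}$ telescopes to the $k$-independent constant $N(N-1)/4$. Substituting into $\phi(u_{ij}^+)$ and collapsing the $k$-dependent factor via $q_{ij}^{k}q_{ji}^{k/2}q_{ij}^{-k/2}=(q_{ij}q_{ji})^{k/2}=(q^{d_ia_{ij}})^k$ gives
$$\phi(u_{ij}^+)=q_{ii}^{N(N-1)/4}q_{ij}^{N/2}\sum_{k=0}^{N}(-1)^k\binom{N}{k}_{q_{ii}}q_{ii}^{k(k-1)/2}(q^{d_ia_{ij}})^kE_i^{N-k}E_jE_i^{k},$$
an invertible scalar times the one-parameter Serre element, hence $0$ in $U_{q,q^{-1}}(\mg_A)$; the negative side is identical with $q_{\beta\ga}^{-1/2}$ in place of $q_{\beta\ga}^{1/2}$, giving $\phi(u_{ij}^-)=0$. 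I expect this telescoping-and-parameter-matching to be the only genuinely delicate step. This proves $\phi$ is a well-defined Hopf algebra morphism.

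For bijectivity, twisting does not alter the underlying coalgebra, so $U_{q,q^{-1}}^\si(\mg_A)$ and $U_{q,q^{-1}}(\mg_A)$ coincide as graded vector spaces; in particular $\phi$ is surjective since its image contains all the generators. Injectivity follows from the triangular decomposition $U_{\q}^-\ot U_{\q}^0\ot U_{\q}^+$ (Corollary \ref{tri2}) together with a standard spanning argument: the $\q$-Serre relations reduce an arbitrary product of the $e_i$ to the same monomial spanning set as in the one-parameter case, bounding $\dim(U_{\q}^{\pm})_{\pm\beta}$ above by $\dim(U_{q,q^{-1}}^{\pm})_{\pm\beta}$ in each degree. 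Since $\phi$ is graded, is the identity on the torus $U_{\q}^0$, and is already surjective, this dimension bound forces $\phi|_{U_{\q}^{\pm}}$ to be bijective, so $\phi$ is a Hopf algebra isomorphism. Equivalently, the inverse can be produced directly from the invertibility of cocycle twisting via $\si^{-1}$.
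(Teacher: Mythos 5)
Your proposal is correct and takes essentially the same route as the paper's proof: you verify the relations $(R1)$--$(R7)$ for the twisted product by exploiting that $\si$ is supported on the torus, and your telescoped constant $q_{ii}^{N(N-1)/4}=q_{ii}^{(a_{ij}-1)a_{ij}/4}$ together with the collapse $q_{ij}^{k}q_{ji}^{k/2}q_{ij}^{-k/2}=(q_{ij}q_{ji})^{k/2}=q^{d_ia_{ij}k}$ reproduces exactly the paper's computations for $(R^*6)$ and $(R^*7)$, reducing them to the one-parameter Serre relations. Your closing bijectivity discussion (graded surjectivity plus, more robustly, the invertibility of cocycle twisting via $\si^{-1}$) spells out a step the paper leaves implicit behind ``it suffices to check the relations.''
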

\begin{proof}
Denote $a*b:=m^{\si}(a,b)$ for $a,b\in U_{q,q^{-1}}(\mg_A)$. It
suffices to check the relations:
\begin{eqnarray*}
 &(R^*1)&\quad K_i^{\pm1}*K_j'^{\pm1}=K_j'^{\pm1}*K_i^{\pm1},\quad
K_i^{\pm1}*K_i^{\mp1}=K_i'^{\pm1}K_i'^{\mp1}=1,\\
 &(R^*2)&\quad K_i^{\pm1}*K_j^{\pm1}=K_j^{\pm1}*K_i^{\pm1},
 \quad K_i'^{\pm1}*K_j'^{\pm1}=K_j'^{\pm1}*K_i'^{\pm1},\\
&(R^*3)&\quad K_i*E_j*K_i^{-1}=q_{ij} E_j,
\qquad K_i'*E_j *K_i'^{-1}=q_{ji}^{-1} E_j,
\\
&(R^*4)&\quad K_i*F_j*K_i^{-1}=q_{ij}^{-1} F_j,\qquad
K_i'*F_j *K_i'^{-1}=q_{ji} F_j,\\
&(R^*5)&\quad E_i*F_j-F_j*E_i=\delta_{i,j}\frac{q_{ii}}{q_{ii}-1}({K_i-K_i'}),\\
&(R^*6)&\quad \sum_{k=0}^{1-a_{ij}} (-1)^{k}
\binom{1-a_{ij}}{k}_{q_{ii}}q_{ii}^{\frac{k(k-1)}{2}}
q_{ij}^{k}E_{i}^{*(1-a_{ij}-k)}* E_{j}* E_{i}^{*k} =0 \quad (i\ne j),\\
&(R^*7)&\quad\sum_{k=0}^{1-a_{ij}} (-1)^{k}
\binom{1-a_{ij}}{k}_{q_{ii}}q_{ii}^{\frac{k(k-1)}{2}}
q_{ij}^{k}F_{i}^{k}*F_{j}*F_{i}^{*(1-a_{ij}-k)}=0 \quad (i\ne j).
\end{eqnarray*}
Since
\begin{gather*}
\De^2(K_i)=K_i\ot K_i\ot K_i,\\
\De^2(K_i')=K_i'\ot K_i'\ot K_i',\\
\De^2(E_i)=E_i\ot 1\ot1+K_i\ot E_i\ot 1+K_i\ot K_i\ot E_i,\\
\De^2(F_i)= 1\ot1\ot F_i+1\ot F_i\ot K_i'+F_i\ot K_i'\ot K_i'.
\end{gather*}
It is straightforward to check $(R^*1)$ and $(R^*2)$. For $(R^*3)$
and $(R^*4)$:
\begin{align*}
K_i*E_j&=\si(K_i,K_j)K_iE_j=\si(K_i,K_j)q^{d_ia_{ij}}E_jK_i\\
&=\si(K_i,K_j)q^{d_ia_{ij}}\si(K_j,K_i)^{-1}E_j*K_i\\
&=q_{ij}^{\frac{1}{2}}q^{d_ia_{ij}}q_{ji}^{-\frac{1}{2}}E_j*K_i\\
&=q_{ij}(q_{ij}q_{ji})^{-\frac{1}{2}}q^{d_ia_{ij}}E_j*K_i\\
&=q_{ij}(q_{ii})^{-\frac{a_{ij}}{2}}q^{d_ia_{ij}}E_j*K_i\\
&=q_{ij}q^{-d_ia_{ij}}q^{d_ia_{ij}}E_j*K_i\\
&=q_{ij}E_j*K_i,
\end{align*}
\begin{align*}
K_i'*E_j&=\si(K_i',K_j)K_i'E_j=\si(K_i',K_j)q^{-d_ia_{ij}}E_jK_i'\\
&=\si(K_i',K_j)q^{-d_ia_{ij}}\si(K_j,K_i')^{-1}E_j*K_i'\\
&=q_{ij}^{\frac{1}{2}}q^{-d_ia_{ij}}q_{ji}^{-\frac{1}{2}}E_j*K_i'\\
&=q_{ji}^{-1}(q_{ij}q_{ji})^{\frac{1}{2}}q^{-d_ia_{ij}}E_j*K_i'\\
&=q_{ji}^{-1}(q_{ii})^{\frac{a_{ij}}{2}}q^{-d_ia_{ij}}E_j*K_i'\\
&=q_{ji}^{-1}q^{d_ia_{ij}}q^{-d_ia_{ij}}E_j*K_i'\\
&=q_{ji}^{-1}E_j*K_i',
\end{align*}
\begin{align*}
K_i*F_j&=\si(K_i,K_j')^{-1}K_iF_j=\si(K_i,K_j')^{-1}q^{-d_ia_{ij}}F_jK_i\\
&=\si(K_i,K_j')^{-1}q^{-d_ia_{ij}}\si(K_j',K_i)F_j*K_i\\
&=q_{ij}^{-\frac{1}{2}}q^{-d_ia_{ij}}q_{ji}^{\frac{1}{2}}F_j*K_i\\
&=q_{ij}^{-1}(q_{ij}q_{ji})^{\frac{1}{2}}q^{-d_ia_{ij}}F_j*K_i\\
&=q_{ij}^{-1}(q_{ii})^{\frac{a_{ij}}{2}}q^{-d_ia_{ij}}F_j*K_i\\
&=q_{ij}^{-1}q^{d_ia_{ij}}q^{-d_ia_{ij}}F_j*K_i\\
&=q_{ij}^{-1}F_j*K_i,
\end{align*}
\begin{align*}
K_i'*F_j&=\si(K_i',K_j')^{-1}K_i'F_j=\si(K_i',K_j')^{-1}q^{d_ia_{ij}}F_jK_i'\\
&=\si(K_i',K_j')^{-1}q^{d_ia_{ij}}\si(K_j',K_i')F_j*K_i'\\
&=q_{ij}^{-\frac{1}{2}}q^{d_ia_{ij}}q_{ji}^{\frac{1}{2}}F_j*K_i'\\
&=q_{ji}(q_{ij}q_{ji})^{-\frac{1}{2}}q^{d_ia_{ij}}F_j*K_i'\\
&=q_{ji}(q_{ii})^{-\frac{a_{ij}}{2}}q^{d_ia_{ij}}F_j*K_i'\\
&=q_{ji}q^{-d_ia_{ij}}q^{d_ia_{ij}}F_j*K_i'\\
&=q_{ji}F_j*K_i'.
\end{align*}
For $(R^*5)$:
$$
E_i*F_j-F_j*E_i=E_iF_j-F_jE_i=\delta_{i,j}\frac{q_{ii}}{q_{ii}-1}(K_i-K_i').
$$
For $(R^*6)$:
$$
E_{i}^{*(1-a_{ij}-k)}*E_{j}*E_{i}^{*k} =q_{ii}^{\frac{(a_{ij}-1)a_{ij}}{4}}q_{ij}^{\frac{1-a_{ij}-k}{2}}q_{ji}^{\frac{k}{2}}
E_{i}^{1-a_{ij}-k} E_{j}E_{i}^{k}.
$$
Hence
\begin{eqnarray*}
&&\sum_{k=0}^{1-a_{ij}} (-1)^{k}
\binom{1-a_{ij}}{k}_{q_{ii}} q_{ii}^{\frac{k(k-1)}{2}}q_{ij}^k
E_{i}^{*(1-a_{ij}-k)}*E_{j}*
E_{i}^{*k} \\
&=&\sum_{k=0}^{1-a_{ij}} (-1)^{k} \binom{1-a_{ij}}{k}_{q_{ii}}
q_{ii}^{\frac{k(k-1)}{2}}q_{ij}^k
q_{ii}^{\frac{(a_{ij}-1)a_{ij}}{4}}q_{ij}^{\frac{1-a_{ij}-k}{2}}q_{ji}^{\frac{k}{2}}
E_{i}^{1-a_{ij}-k}
E_{j}E_{i}^{k} \\
&=&q_{ii}^{\frac{(a_{ij}-1)a_{ij}}{4}}q_{ij}^{\frac{1-a_{ij}}{2}}\sum_{k=0}^{1-a_{ij}}
(-1)^{k} \binom{1-a_{ij}}{k}_{q_{ii}} q_{ii}^{\frac{k(k-1)}{2}}
q_{ij}^{\frac{k}{2}}q_{ji}^{\frac{k}{2}} E_{i}^{1-a_{ij}-k}
E_{j}E_{i}^{k} \\
&=&q_{ii}^{\frac{(a_{ij}-1)a_{ij}}{4}}q_{ij}^{\frac{1-a_{ij}}{2}}\sum_{k=0}^{1-a_{ij}} (-1)^{k}
\binom{1-a_{ij}}{k}_{q_{ii}} q_{ii}^{\frac{k(k-1)}{2}}
(q_{ij}q_{ji})^{\frac{k}{2}}
E_{i}^{1-a_{ij}-k}
E_{j}E_{i}^{k} \\
&=&q_{ii}^{\frac{(a_{ij}-1)a_{ij}}{4}}q_{ij}^{\frac{1-a_{ij}}{2}}\sum_{k=0}^{1-a_{ij}} (-1)^{k}
\binom{1-a_{ij}}{k}_{q^{2d_i}} q^{d_ik(k-1+a_{ij})}
E_{i}^{1-a_{ij}-k}
E_{j}E_{i}^{k} \\
&=&0.
\end{eqnarray*}
For $(R^*7)$:
Since
$$
F_{i}^{*k}*F_{j}*F_{i}^{*(1-a_{ij}-k)}=q_{ii}^{\frac{(1-a_{ij})a_{ij}}{4}}q_{ji}^{-\frac{1-a_{ij}-k}{2}}q_{ij}^{-\frac{k}{2}}
F_{i}^{k}F_{j}F_{i}^{1-a_{ij}-k} .
$$
Therefore,
\begin{eqnarray*}
&&\sum_{k=0}^{1-a_{ij}} (-1)^{k}
\binom{1-a_{ij}}{k}_{q_{ii}} q_{ii}^{\frac{k(k-1)}{2}}q_{ij}^k\,
F_{i}^{*k}*F_{j}*F_{i}^{*(1-a_{ij}-k)}\\
&=&\sum_{k=0}^{1-a_{ij}} (-1)^{k}
\binom{1-a_{ij}}{k}_{q_{ii}} q_{ii}^{\frac{k(k-1)}{2}}q_{ij}^kq_{ii}^{\frac{(1-a_{ij})a_{ij}}{4}}q_{ji}^{-\frac{1-a_{ij}-k}{2}}q_{ij}^{-\frac{k}{2}}\,
F_{i}^{k}
F_{j}F_{i}^{1-a_{ij}-k}\\
&=&q_{ii}^{\frac{(1-a_{ij})a_{ij}}{4}}q_{ji}^{\frac{a_{ij}-1}{2}}\sum_{k=0}^{1-a_{ij}} (-1)^{k}
\binom{1-a_{ij}}{k}_{q_{ii}} q_{ii}^{\frac{k(k-1)}{2}}q_{ij}^{\frac{k}{2}}q_{ji}^{\frac{k}{2}}
F_{i}^{k}
F_{j}F_{i}^{1-a_{ij}-k}\\
&=&q_{ii}^{\frac{(1-a_{ij})a_{ij}}{4}}q_{ji}^{\frac{a_{ij}-1}{2}}\sum_{k=0}^{1-a_{ij}} (-1)^{k}
\binom{1-a_{ij}}{k}_{q_{ii}} q_{ii}^{\frac{k(k-1)}{2}}(q_{ij}q_{ji})^{\frac{k}{2}}
F_{i}^{k}
F_{j}F_{i}^{1-a_{ij}-k}\\
&=&q_{ii}^{\frac{(1-a_{ij})a_{ij}}{4}}q_{ji}^{\frac{a_{ij}-1}{2}}\sum_{k=0}^{1-a_{ij}}
(-1)^{k} \binom{1-a_{ij}}{k}_{q_{ii}}
q_{ii}^{\frac{k(k-1)}{2}}q_{ii}^{\frac{a_{ij}k}{2}} F_{i}^{k}
F_{j}F_{i}^{1-a_{ij}-k}\\
&=&q_{ii}^{\frac{(1-a_{ij})a_{ij}}{4}}q_{ji}^{\frac{a_{ij}-1}{2}}\sum_{k=0}^{1-a_{ij}} (-1)^{k}
\binom{1-a_{ij}}{k}_{q^{2d_i}} q^{d_ik(k-1+a_{ij})}
F_{i}^{k}
F_{j}F_{i}^{1-a_{ij}-k}\\
&=&0.
\end{eqnarray*}

The proof is complete.
\end{proof}

\section{Representation Theory}
When $\mg_A$ is of finite type, we denote
$$
q_{\mu\nu}=\prod_{i,j\in I}q_{ij}^{\mu\nu}
$$
for $\mu=\sum_{i\in I}\mu_i\al_i, \ \nu=\sum_{i\in
I}\nu_i\al_i\in\La$. When $\mg_{A}$ is of affine type, let
$I=\{0,1,\cdots,l\}$ and $\La=\sum_{i\in I}\bZ \La_i $ such that
$\La_i(h_j)=\de_{i,j}$ for $i,\, j\in I$, where $\La_i$ is the $i$th
fundamental weight of $\mg_A$. Let $q_{\La_0\al_i},\
q_{\al_i\La_0}\in\bK \ (i\in I)$ such that
\begin{eqnarray}
q_{\La_0\al_i}q_{\al_i\La_0}=q_{ii}^{\de_{i,0}},\quad\forall \ i\in I.\label{bu}
\end{eqnarray}
Now we can define $q_{\mu\nu}$ for $\mu,\nu\in\La$ as above.

\subsection{Category $\O_{int}^{\q}$}
\begin{defi}
The category $\O_{int}^{\q}$  consists of $U_{\q}(\mg_A)$-modules
$V^{\q}$ with the following conditions satisfied:

$(1)$\ $V^{\q}$ has a weight space decomposition
$V^{\q}=\bigoplus_{\la\in\La}V^{\q}_{\la}$, where
$$
V^{\q}_{\la}=\{v\in V^{\q}\mid\om_{i}v=q_{\al_i\la}v,\
\om_{i}'v=q_{\la\al_i}^{-1}v, \ \forall\; i\in I\}
$$
and $\dim V_{\la}^{\q}<\infty$ for all $\la\in\La$.

$(2)$\ There exist a finite number of elements $\la_1,\dots,
\la_t\in \La$ such that
$$
\wt (V^{\q})\subset D(\la_1)\cup\cdots\cup D(\la_t),
$$
where $D(\la_i):=\{\mu\in\La\,|\,\mu<\la_i\}$.

$(3)$\  $e_i$ and $f_i$ are locally nilpotent on $V^{\q}$.

The morphisms are taken to be usual $U_{\q}(\mg_A)$-module
homomorphisms.
\end{defi}

\begin{lem}\label{key} For any $\lambda\in\Lambda$, and $i\in I$, we have
\begin{equation}
q_{\al_i\la}q_{\la\al_i}=q_{ii}^{\la(h_i)}.
\end{equation}
\end{lem}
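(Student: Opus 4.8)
The plan is to turn the identity into a statement about two homomorphisms $\La\to\bK^{\times}$ and to check it on a $\bQ$-basis of $\La\ot\bQ$. To this end consider, for fixed $i\in I$, the map
\[
\psi_i:\La\lra\bK^{\times},\qquad \psi_i(\be)=q_{\al_i\be}\,q_{\be\al_i}\,q_{ii}^{-\be(h_i)} .
\]
Since the pairing $q_{\mu\nu}$ is multiplicative in each slot and $\be\mapsto\be(h_i)$ is linear, $\psi_i$ is compatible with $\bQ$-linear combinations of weights, i.e. $\psi_i\bigl(\sum_b c_b\,b\bigr)=\prod_b\psi_i(b)^{c_b}$ for $c_b\in\bQ$. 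Hence it suffices to verify $\psi_i(b)=1$ as $b$ runs over a fixed $\bQ$-basis of $\La\ot\bQ$, and the assertion of the lemma is precisely $\psi_i\equiv 1$.

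For $\mg_A$ of finite type the simple roots $\{\al_k\}_{k\in I}$ already form such a basis (as $Q$ has finite index in $\La$). Using $q_{\al_i\al_k}=q_{ik}$, the defining relation $q_{ik}q_{ki}=q_{ii}^{a_{ik}}$, and $\al_k(h_i)=a_{ik}$, we get
\[
\psi_i(\al_k)=q_{ik}\,q_{ki}\,q_{ii}^{-a_{ik}}=q_{ii}^{a_{ik}}\,q_{ii}^{-a_{ik}}=1 ,
\]
which finishes this case. Note that only integer exponents occur here, so no choice of fractional root is involved.

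In the affine case, with $I=\{0,1,\dots,l\}$, the simple roots pair trivially with a one-dimensional direction among the coroots (equivalently, $\de=\sum_{k}a_k\al_k$ annihilates every $h_i$), so $\{\al_k\}$ spans only a codimension-one subspace of $\La\ot\bQ$; adjoining $\La_0$ gives a $\bQ$-basis $\{\al_1,\dots,\al_l,\La_0\}$. On $\al_1,\dots,\al_l$ the computation above again gives $\psi_i=1$, while on the extra generator we invoke exactly the normalization $(\ref{bu})$ together with $\La_0(h_i)=\de_{i,0}$:
\[
\psi_i(\La_0)=q_{\al_i\La_0}\,q_{\La_0\al_i}\,q_{ii}^{-\de_{i,0}}=q_{ii}^{\de_{i,0}}\,q_{ii}^{-\de_{i,0}}=1 .
\]
Thus $\psi_i$ vanishes on a full $\bQ$-basis in both cases, yielding $q_{\al_i\la}q_{\la\al_i}=q_{ii}^{\la(h_i)}$ for every $\la\in\La$.

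The two displayed identities are routine; the only genuine point requiring care is the affine situation. There one must recognize that the simple roots fail to span $\La\ot\bQ$, so that the pairing $q_{\mu\nu}$ cannot be reconstructed from the $q_{ij}$ alone, and that the auxiliary data $q_{\La_0\al_i},q_{\al_i\La_0}$ constrained by $(\ref{bu})$ supplies precisely the missing value in the $\La_0$-direction, normalized so that $\psi_i(\La_0)=1$. Finally, the standing hypothesis that $q_{ii}$ is not a root of unity ensures that the fractional powers of $q_{ii}$ entering $q_{ii}^{\la(h_i)}$ are unambiguous, so that the coincidence of the two homomorphisms is an honest equality in $\bK$.
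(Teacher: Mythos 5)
Your proof is correct, and at its core it is the same argument as the paper's: both rest on the bimultiplicativity of $q_{\mu\nu}$, the defining relation $q_{ik}q_{ki}=q_{ii}^{a_{ik}}$, and, in the affine case, the normalization (\ref{bu}). The difference is which basis carries the check. The paper reduces to the fundamental weights $\La_j$ and then expands $\la_j=\sum_{k}x_{kj}\al_k$ with $x_{kj}\in\bQ$; you instead verify that the homomorphism $\psi_i$ is trivial directly on the simple roots, adjoining $\La_0$ in the affine case. Your transposition is actually an improvement there: as you observe, in affine type $\de=\sum_k a_k\al_k$ annihilates every $h_i$, so the simple roots span only a codimension-one subspace of $\La\ot\bQ$ and no fundamental weight lies in their $\bQ$-span; hence the paper's expansion $\la_j=\sum_k x_{kj}\al_k$ is literally valid only in finite type, with the affine case covered by a terse appeal to (\ref{bu}), whereas your basis $\{\al_1,\dots,\al_l,\La_0\}$ makes explicit exactly where (\ref{bu}) enters. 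Two small caveats. First, your identity $\psi_i\bigl(\sum_b c_b b\bigr)=\prod_b\psi_i(b)^{c_b}$ for $c_b\in\bQ$ presupposes a consistent choice of fractional powers in the extension of $q_{\mu\nu}$ to $\La$; this is a tacit assumption, but the paper makes exactly the same one when it writes $\prod_k(q_{ik}q_{ki})^{x_{kj}}=q_{ii}^{\sum_k a_{ik}x_{kj}}$ with rational $x_{kj}$, so it is not a gap relative to the paper. Second, your closing remark is slightly off: the exponent $\la(h_i)$ is an integer, so $q_{ii}^{\la(h_i)}$ involves no fractional powers at all, and the hypothesis that $q_{ii}$ is not a root of unity does not by itself make roots $q_{ii}^{1/m}$ unambiguous---what is needed is the compatibility of the chosen roots, which the paper's conventions (the fixed elements $q_{ii}^{1/m}\in\bK$) are meant to supply. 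Neither caveat affects the validity of your argument.
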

\begin{proof}
It suffices to prove
$$q_{\al_i\La_j}q_{\La_j\al_i}=q_{ii}^{\La_j(h_i)}
=q_{ii}^{\de_{ij}},\quad \forall\; i,\, j\in I.$$
By (\ref{bu}), let $\la_j=\sum_{k\in I}x_{kj}\al_k$. Then
\begin{eqnarray*}
q_{\al_i\la_j}q_{\la_j\al_i}&=&\prod_{k\in I}q_{ik}^{x_{kj}}\prod_{k\in I}q_{ki}^{x_{kj}}
=\prod_{k\in I}(q_{ik}q_{ki})^{x_{kj}}\\
&=&%(q_{ik}q_{ki})^{\sum_{k\in I}x_{kj}}
(q_{ii})^{\sum_{k\in I}a_{ik}x_{kj}}\\
&=&q_{ii}^{\delta_{ij}}.
\end{eqnarray*}

This completes the proof.
\end{proof}

\begin{lem}\label{com}
For any $i\in I$, $m\in\bZ$ and $m\geq1$, we have
\begin{align}
e_if_i^m
&=f_i^me_i+\frac{q_{ii}}{q_{ii}-1}f_i^{m-1}\left((m)_{q_{ii}^{-1}}\om_i
-(m)_{q_{ii}}\om_i'\right),\\
e_i^mf_i&=f_ie_i^m+\frac{q_{ii}}{q_{ii}-1}e_i^{m-1}
\left((m)_{q_{ii}}\om_i-(m)_{q_{ii}^{-1}}\om_i'\right).
\end{align}
\end{lem}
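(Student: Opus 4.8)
The plan is to prove the first identity by induction on $m$ and then to obtain the second one by transporting it through the $\bK$-algebra anti-automorphism $\Psi$ of Lemma \ref{auto}. For the base case $m=1$ of the first identity, I note that $(1)_v=1$ for every scalar $v$, so the asserted formula collapses to $e_if_i=f_ie_i+\frac{q_{ii}}{q_{ii}-1}(\om_i-\om_i')$, which is exactly relation $(R5)$ specialised to $i=j$.

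For the inductive step I would write $e_if_i^{m+1}=(e_if_i^m)\,f_i$, insert the inductive hypothesis for $e_if_i^m$, and then push a single $f_i$ to the right through every factor. The term $f_i^m e_i f_i$ is treated by the base case and produces $f_i^{m+1}e_i+\frac{q_{ii}}{q_{ii}-1}f_i^m(\om_i-\om_i')$. For the remaining term I use $(R4)$ in the forms $\om_if_i=q_{ii}^{-1}f_i\om_i$ and $\om_i'f_i=q_{ii}f_i\om_i'$ to commute the group-likes past the last $f_i$, which introduces factors $q_{ii}^{-1}$ and $q_{ii}$ respectively. Collecting terms, the coefficient of $f_i^m\om_i$ becomes $1+q_{ii}^{-1}(m)_{q_{ii}^{-1}}$ and that of $f_i^m\om_i'$ becomes $1+q_{ii}(m)_{q_{ii}}$. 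The identity $(m{+}1)_v=1+v\,(m)_v$ (immediate from $(m+n)_v=(m)_v+v^m(n)_v$, or directly from the definition) rewrites these as $(m{+}1)_{q_{ii}^{-1}}$ and $(m{+}1)_{q_{ii}}$, which closes the induction.

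For the second identity I would apply $\Psi$, which fixes $\om_i,\om_i'$ and all scalars while swapping $e_i\leftrightarrow f_i$ and reversing products. Applying $\Psi$ to the first identity turns $e_if_i^m$ into $e_i^mf_i$ and $f_i^me_i$ into $f_ie_i^m$, but leaves $\om_i,\om_i'$ standing to the left of $e_i^{m-1}$; I then use $(R3)$ in the forms $\om_ie_i=q_{ii}e_i\om_i$ and $\om_i'e_i=q_{ii}^{-1}e_i\om_i'$ to move them to the right of $e_i^{m-1}$, multiplying the two terms by $q_{ii}^{m-1}$ and $q_{ii}^{-(m-1)}$ respectively. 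The final bookkeeping rests on the elementary identity $q_{ii}^{m-1}(m)_{q_{ii}^{-1}}=(m)_{q_{ii}}$ (and its inverse $q_{ii}^{-(m-1)}(m)_{q_{ii}}=(m)_{q_{ii}^{-1}}$), which follows at once from $(m)_v=1+v+\cdots+v^{m-1}$; it converts the coefficient $q_{ii}^{m-1}(m)_{q_{ii}^{-1}}\om_i-q_{ii}^{-(m-1)}(m)_{q_{ii}}\om_i'$ into $(m)_{q_{ii}}\om_i-(m)_{q_{ii}^{-1}}\om_i'$, which is precisely the asserted expression.

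I expect the only genuine subtlety to be the exponent bookkeeping when commuting $\om_i,\om_i'$ past the strings of $f_i$'s (resp.\ $e_i$'s): one must track each power of $q_{ii}$ and recognise the resulting partial sums as quantum integers through the two displayed identities. Everything else is formal manipulation. As an alternative to the $\Psi$ route, the second identity admits an entirely parallel induction on $m$ using $(R3)$ in place of $(R4)$; I would nonetheless favour the anti-automorphism argument, since it avoids repeating the computation.
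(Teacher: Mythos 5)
Your proof is correct, and for the first identity it coincides with the paper's own argument: the same induction on $m$, with base case given by relation $(R5)$ (which the paper's proof in fact miscites as ``$(R6)$''; your attribution is the right one) and the same inductive step --- split off one $f_i$, apply $(R5)$ to $e_if_i$, commute $\om_i,\om_i'$ past $f_i$ via $(R4)$, and absorb the resulting terms using $(m{+}1)_v=1+v\,(m)_v$. For the second identity the paper merely writes ``Similarly, the second equation holds,'' i.e.\ it tacitly runs the parallel induction that you mention as your fallback; your preferred route instead transports the first identity through the anti-automorphism $\Psi$ of Lemma \ref{auto}(2), and this works: since $\Psi$ is $\bK$-linear it fixes all coefficients, reversing products sends $e_if_i^m\mapsto e_i^mf_i$ and $f_i^me_i\mapsto f_ie_i^m$ while leaving $\om_i,\om_i'$ on the wrong side of $e_i^{m-1}$, and your bookkeeping identity $q_{ii}^{m-1}(m)_{q_{ii}^{-1}}=(m)_{q_{ii}}$ (immediate from $(m)_v=1+v+\cdots+v^{m-1}$) correctly converts the twisted coefficients after pushing the group-likes through $e_i^{m-1}$ with $(R3)$. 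What the $\Psi$ route buys is exactly what you claim --- no repetition of the computation, at the price of one small reindexing identity which you supply --- so your write-up is, if anything, more complete than the paper's at the point where the paper says ``similarly.''
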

\begin{proof}
For $m = 1$, it is the relation $(R6)$. For $m>1$, we have
$$
e_if_i^m
=f_i^me_i+\frac{q_{ii}}{q_{ii}-1}f_i^{m-1}\left((m)_{q_{ii}^{-1}}\om_i
-(m)_{q_{ii}}\om_i'\right).
$$
Then
\begin{eqnarray*}
e_if_i^{m+1} &=&f_i^me_if_i+\frac{q_{ii}}{q_{ii}-1}f_i^{m-1}
\left((m)_{q_{ii}^{-1}}\om_i-(m)_{q_{ii}}\om_i'\right)f_i\\
&=&f_i^m\left(f_ie_i+\frac{q_{ii}}{q_{ii}-1}(\om_i-\om_i')\right) +
\frac{q_{ii}}{q_{ii}-1}f_i^{m}\left(q_{ii}^{-1}(m)_{q_{ii}^{-1}}\om_i
 -q_{ii}(m)_{q_{ii}}\om_i'\right)\\
&=&f_i^{m+1}e_i
 +\frac{q_{ii}}{q_{ii}-1}f_i^{m}\left((m+1)_{q_{ii}^{-1}}\om_i-(m+1)_{q_{ii}}\om_i'\right).
\end{eqnarray*}
Similarly, the second equation holds.
\end{proof}

For each $i\in I$, let $U_i$ be a subalgebra of  $U_{\q}(\mg_A)$
generated by $e_i,\, f_i, \,\om_i^{\pm1},\, \om_i'^{\pm1}$.
%\begin{align*}
%&\om_i e_i\om^{-1}_i=q_{ii}e_i,\\
%&\om'_i e_i\om'^{-1}_i=q_{ii}^{-1}e_i,\\
%&\om_i f_i\om^{-1}=q_{ii}^{-1}f_i,\\
%&\om'_i f_i\om'^{-1}_i=q_{ii}f_i,\\
%&[e_i,f_i]=\frac{\om_i-\om'_i}{q_{ii}^{-1}-1}.
%\end{align*}

\begin{pro}\label{sl22}  \
Let $\phi:U_i^0\to \bK$ be a homomorphism of algebras. Denote
$$\phi_i:=\phi(\om_i),\quad\phi'_i:=\phi(\om'_i),\quad
v_j:=f^j\otimes v_\phi\in M(\phi),\ j\ge 0.$$
Then

$(\text{\rm i})$ \ $M(\phi)$ is a simple $U_i$-module if and only if
$\phi_i- q_{ii}^{-j}\phi'_i\ne 0,\ \forall\  j\ge 0$.

$(\text{\rm ii})$ \ If $\phi_i'=\phi_iq_{ii}^{-m}$ for $m\ge 0$,
then $M(\phi)$ has a unique maximal submodule
$$
N=\text{\rm Span}_\mathbb K\{\,v_j\mid j\ge
m+1\,\}\cong M(\phi-(m+1)\al_i).
$$

$(\text{\rm iii})$\ The simple $U_i$-module $L(\phi)$ is
$(m+1)$-dimensional. Moreover, it is spanned by $v_0,v_1,\cdots,v_m$
such that
\begin{gather*}
\om_i.v_j=\phi_i q_{ii}^{-j}v_j,\\
\om'_i.v_j=\phi_i q_{ii}^{j-m}v_j,\\
f_i.v_j=v_{j+1},\ (v_{m+1}=0),\\
e_i.v_j=\phi_i q_{ii}^{-m+1}(m-j+1)_{q_{ii}}(j)_{q_{ii}}v_{j-1},
\quad (v_{-1}=0).
\end{gather*}

$(\text{\rm iv})$\ Any $(m+1)$-dimensional simple $U_i$-module is
isomorphic to $L(\phi)$ for some $\phi$.

$(\text{\rm v})$ \ Let $\nu=\sum_{i\in
I}\nu_i\Lambda_i\in\Lambda^+$. Then $\hat
\nu(\om_i')=\hat\nu(\om_i)q_{ii}^{-\nu_i}$ and $U_i$-module
$L(\nu_i\Lambda_i)$ is $(\nu_i+1)$-dimensional and
$\phi_i=\hat\nu(\om_i)$. Here $\hat{\nu}: U^0\to \bK$ is the algebra
homomorphism such that $\hat{\nu}(\om_i)=q_{\al_i\mu}$,
$\hat{\nu}(\om_i')=q_{\mu£¬\al_i}^{-1},\ \forall\ i\in I$.
\end{pro}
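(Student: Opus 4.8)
The plan is to regard $U_i$ as a rank-one two-parameter $\mathfrak{sl}_2$-type algebra and run the usual Verma-module analysis, with all the bookkeeping supplied by Lemma \ref{com}. Here $M(\phi)=U_i\otimes_{U_i^{\ge0}}\bK v_\phi$ is induced from the one-dimensional module on which $e_i$ acts by $0$ and $\om_i,\om_i'$ act by $\phi_i,\phi_i'$; by the analogue for $U_i$ of the triangular decomposition (Corollary \ref{tri2}), the vectors $v_j=f_i^j\otimes v_\phi$ form a basis. First I would record the action. From $(R4)$ one has $\om_if_i=q_{ii}^{-1}f_i\om_i$ and $\om_i'f_i=q_{ii}f_i\om_i'$, whence $\om_i.v_j=\phi_iq_{ii}^{-j}v_j$, $\om_i'.v_j=\phi_i'q_{ii}^{j}v_j$ and $f_i.v_j=v_{j+1}$. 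Since $e_i.v_\phi=0$, Lemma \ref{com} applied to $v_\phi$ gives $e_i.v_j=c_jv_{j-1}$ with
\begin{equation*}
c_j=\frac{q_{ii}}{q_{ii}-1}\bigl((j)_{q_{ii}^{-1}}\phi_i-(j)_{q_{ii}}\phi_i'\bigr)
=\frac{q_{ii}}{q_{ii}-1}(j)_{q_{ii}}\bigl(q_{ii}^{1-j}\phi_i-\phi_i'\bigr),
\end{equation*}
where I have used $(j)_{q_{ii}^{-1}}=q_{ii}^{1-j}(j)_{q_{ii}}$. Because $q_{ii}$ is not a root of unity, $(j)_{q_{ii}}\ne0$ for $j\ge1$, so $c_j=0$ if and only if $\phi_i'=q_{ii}^{1-j}\phi_i$.

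Next I would read off (i)--(iii) from the submodule lattice. The $v_j$ are $U_i^0$-eigenvectors with pairwise distinct eigenvalues (again since $q_{ii}$ is not a root of unity), so any submodule is the span of a set of $v_j$'s; such a span is stable under $f_i$ exactly when it is upward closed in $j$, and stable under $e_i$ exactly when it never has to descend through an index $j$ with $c_j\ne0$. Hence the proper nonzero submodules are precisely $N_k=\mathrm{Span}_{\bK}\{v_j:j\ge k\}$ with $c_k=0$, giving (i): $M(\phi)$ is simple iff $c_k\ne0$ for all $k\ge1$, i.e. iff $\phi_i'\ne q_{ii}^{1-k}\phi_i$ for all $k\ge1$ (equivalently $\phi_i\ne q_{ii}^{m}\phi_i'$ for all $m\ge0$). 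When $\phi_i'=\phi_iq_{ii}^{-m}$, substitution turns $c_j$ into $\phi_iq_{ii}^{1-m}(m-j+1)_{q_{ii}}(j)_{q_{ii}}$, which for $j\ge1$ vanishes only at $j=m+1$; this yields the unique maximal submodule $N=N_{m+1}$ of (ii), and comparing the $U_i^0$-character of its top vector $v_{m+1}$ (namely $\om_i\mapsto\phi_iq_{ii}^{-(m+1)}$) with the weight shift identifies $N\cong M(\phi-(m+1)\al_i)$. The quotient $L(\phi)=M(\phi)/N$ then has basis $v_0,\dots,v_m$ and the displayed action of (iii), and it is simple because $c_j\ne0$ for $1\le j\le m$.

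For (iv) the task is to manufacture a highest weight vector in a given finite-dimensional simple $V$. Relation $(R3)$ gives $\om_ie_i=q_{ii}e_i\om_i$, so $e_i$ multiplies $\om_i$-eigenvalues by $q_{ii}$; since $\dim V<\infty$ and $q_{ii}$ is not a root of unity, iterating $e_i$ must terminate, producing a joint $U_i^0$-eigenvector $w$ with $e_i.w=0$. Then $U_i.w$ is a highest weight module, hence a quotient of some $M(\phi)$, and simplicity forces $V\cong L(\phi)$; that $\dim V=m+1$ forces $\phi_i'=\phi_iq_{ii}^{-m}$ by (iii). I expect the one delicate point to be the existence of the joint eigenvector $w$ when $\bK$ is not algebraically closed, which is the step I would treat most carefully, extracting $w$ from the top of the $e_i$-stable $\om_i$-eigenvalue flag rather than assuming the eigenvalues split.

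Finally (v) is a specialization of the above via Lemma \ref{key}. With $\phi_i=\hat\nu(\om_i)=q_{\al_i\nu}$ and $\phi_i'=\hat\nu(\om_i')=q_{\nu\al_i}^{-1}$, Lemma \ref{key} gives
\begin{equation*}
\frac{\phi_i'}{\phi_i}=\bigl(q_{\al_i\nu}q_{\nu\al_i}\bigr)^{-1}=q_{ii}^{-\nu(h_i)}=q_{ii}^{-\nu_i},
\end{equation*}
so $\hat\nu(\om_i')=\hat\nu(\om_i)q_{ii}^{-\nu_i}$; applying (ii)--(iii) with $m=\nu_i$ shows that the $U_i$-module $L(\nu_i\Lambda_i)$ is $(\nu_i+1)$-dimensional with $\phi_i=\hat\nu(\om_i)$.
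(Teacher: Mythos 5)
Your proposal is correct and follows essentially the same route as the paper: the paper's proof of (i)--(iv) is a one-line reduction to the rank-one Verma-module analysis of the two-parameter case in \cite{BGH2} --- precisely the argument you spell out via Lemma \ref{com}, the identity $(j)_{q_{ii}^{-1}}=q_{ii}^{1-j}(j)_{q_{ii}}$, and the distinct-eigenvalue submodule lattice --- while for (v) the paper performs exactly the Lemma \ref{key} computation you give. Two incidental remarks: your formula $c_k=0$ iff $\phi_i'=q_{ii}^{1-k}\phi_i$ silently corrects a prime-swap typo in the printed statement of (i), which to be consistent with (ii) should read $\phi_i'-q_{ii}^{-j}\phi_i\ne 0$ for all $j\ge 0$ (equivalently $\phi_i\ne q_{ii}^{j}\phi_i'$), and the eigenvector subtlety you rightly flag in (iv) is genuine (with $e_i=f_i=0$ and $\om_i=\om_i'$ acting by a companion matrix of an irreducible quadratic one gets a two-dimensional simple $U_i$-module with no weight vector over a non-closed $\bK$), so it is resolved not by a flag argument but by the implicit standing assumption, as in the cited source, that the relevant eigenvalues lie in $\bK$.
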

\begin{proof} Similar to the argument of two-parameter cases (see
\cite{BGH2}), in particular, for $(\text{\rm v})$, by Lemma
\ref{key}, we have
$$
\frac{\hat\nu(\om_i')}{\hat\nu(\om_i)}=q_{\La_i\nu}^{-1}q_{\nu\La_i}^{-1}=
q_{ii}^{-\nu(h_i)}=q_{ii}^{-\nu_i}=\frac{\widehat{\nu_i\la_i}(\om_i')}{\widehat{\nu_i\la_i}(\om_i)},
\quad \forall\ i\in I.
$$

The proof is complete.
\end{proof}

\begin{pro}\label{iteg1}
Let $\la\in \La^+$. Let $V^{\q}(\la)$ be an irreducible highest
module with highest weight vector $v_{\la}$. Then
$$f_i^{\la(h_i)+1}v_{\la}=0,\quad\forall\ i\in I.$$
\end{pro}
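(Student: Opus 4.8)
The plan is to show that $w:=f_i^{\lambda(h_i)+1}v_\lambda$ is a maximal vector whose weight lies strictly below $\lambda$, and then to invoke irreducibility of $V^{\q}(\lambda)$ to force $w=0$. Everything reduces to the rank-one subalgebra $U_i$ acting on the cyclic vector $v_\lambda$, exactly in the spirit of the structure theory recorded in Proposition \ref{sl22}.

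First I would record how the Cartan part acts on the highest weight vector, namely $\om_i v_\lambda=q_{\al_i\lambda}v_\lambda$ and $\om_i'v_\lambda=q_{\lambda\al_i}^{-1}v_\lambda$, and then use Lemma \ref{key} to rewrite $q_{\lambda\al_i}^{-1}=q_{\al_i\lambda}\,q_{ii}^{-\lambda(h_i)}$. This identifies the $U_i$-weight $\phi$ of $v_\lambda$ as one satisfying $\phi_i'=\phi_i\,q_{ii}^{-\lambda(h_i)}$ with $\lambda(h_i)\ge 0$, which is precisely the integral situation treated in Proposition \ref{sl22}(ii)--(iii); this is the conceptual reason the vector $v_{\lambda(h_i)+1}$ should be expected to vanish.

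The key computation is to apply the commutation formula of Lemma \ref{com} with exponent $m=\lambda(h_i)+1$ to $v_\lambda$. Since $e_iv_\lambda=0$, only the Cartan term survives, and $e_iw$ collapses to $f_i^{\lambda(h_i)}v_\lambda$ multiplied by a scalar proportional to $(\lambda(h_i)+1)_{q_{ii}^{-1}}-(\lambda(h_i)+1)_{q_{ii}}\,q_{ii}^{-\lambda(h_i)}$. Writing $n=\lambda(h_i)$, the elementary identity $(n+1)_{v^{-1}}=v^{-n}(n+1)_v$ (obtained by reversing the finite geometric sum) shows this scalar is zero, so $e_iw=0$. For $j\neq i$, relation $(R5)$ gives $[e_j,f_i]=0$, whence $e_jw=f_i^{\lambda(h_i)+1}e_jv_\lambda=0$. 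Thus $w$ is annihilated by every $e_j$, $j\in I$.

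Finally, $w$ is a maximal vector of weight $\lambda-(\lambda(h_i)+1)\al_i$, which is strictly smaller than $\lambda$. Using the triangular decomposition of Corollary \ref{tri2}, a nonzero maximal vector would generate a nonzero submodule all of whose weights lie at or below its own weight; irreducibility then forces that submodule to be all of $V^{\q}(\lambda)$, contradicting the presence of $v_\lambda$ in the strictly larger weight $\lambda$. Hence $w=0$. I expect the only delicate point to be verifying that the scalar coefficient of $e_iw$ vanishes; once that identity is in hand, the maximal-vector argument and irreducibility close the proof routinely.
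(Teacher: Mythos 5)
Your proof is correct and follows essentially the same route as the paper's: both apply the commutation formula of Lemma \ref{com} to $v_\la$, use Lemma \ref{key} together with the identity $(m)_{q_{ii}^{-1}}=q_{ii}^{1-m}(m)_{q_{ii}}$ to make the scalar vanish at $m=\la(h_i)+1$, note that $[e_j,f_i]=0$ for $j\neq i$ by $(R5)$, and conclude by irreducibility. Your opening gloss via the rank-one theory of Proposition \ref{sl22} is conceptual motivation rather than a genuinely different argument.
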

\begin{proof}
By Lemma \ref{com},
\begin{align*}
e_if_i^m.v_{\la}&=((m)_{q_{ii}^{-1}}q_{\al_i\la}-(m)_{q_{ii}}q_{\la\al_i}^{-1})f_i^{m-1}.v_{\la}\\
&=(m)_{q_{ii}}q_{\la\al_i}^{-1}(q_{ii}^{-m+1}q_{\al_i\la}q_{\la\al_i}-1)f_i^{m-1}.v_{\la}.
\end{align*}
By Lemma \ref{key},
$$
e_if_i^{\la(h_i)+1}.v_{\la}=0.
$$
By Lemma \ref{com},
$$e_jf_i^{\la(h_i)+1}.v_{\la}=0,\quad\forall j\neq i. $$
If $f_i^{\la(h_i)+1}.v_{\la}\neq0$, then there exists a nontrivial
submodule, contradicting the irreducibility of $V^{\q}(\la)$.
\end{proof}

\begin{coro}\label{em}  Let $\la\in \La^+$.  Let $V^{\q}(\la)$ be an irreducible highest
module with highest weight vector $v_{\la}$. Let $\beta=\sum_{i\in
I}m_i\al_i\in Q^+$ such that $\la(h_i)\geq m_i$, $\forall\; i\in I$.
Then for any $x\in (U_{\q}^-)_{-\be}$, the map $x\mapsto
x.v_\lambda$ is injective.
\end{coro}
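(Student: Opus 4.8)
The plan is to prove the contrapositive by induction on the height $\mathrm{ht}(\be)=\sum_i m_i$: assuming $x\in(U_\q^-)_{-\be}$ satisfies $x.v_\la=0$, I show $x=0$. The case $\be=0$ is immediate since $v_\la\neq 0$. For the inductive step I assume the assertion for every element of degree $-\be'$ with $\mathrm{ht}(\be')<\mathrm{ht}(\be)$ (for the \emph{same} $\la$); the point is that the dominance hypothesis $m_k\le\la(h_k)$ is inherited by each $\be-\al_i$, so the inductive hypothesis does apply to elements of $(U_\q^-)_{-\be+\al_i}$.

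The engine of the induction is a skew-derivation identity extending Lemma \ref{com}: for each $i$ there are linear maps ${}_i\pa,\ \pa_i\colon (U_\q^-)_{-\be}\to(U_\q^-)_{-\be+\al_i}$, fixed by ${}_i\pa(f_j)=\pa_i(f_j)=\de_{ij}$ together with a twisted Leibniz rule coming from $(R4)$, such that
\[
[e_i,x]=\tfrac{q_{ii}}{q_{ii}-1}\bigl(\om_i\,{}_i\pa(x)-\pa_i(x)\,\om_i'\bigr),\qquad x\in U_\q^-,
\]
Lemma \ref{com} being exactly the rank-one case $x=f_i^m$. Applying both sides to $v_\la$, using $e_i.v_\la=0$ and evaluating the torus scalars on the weight-$(\la-\be+\al_i)$ vectors ${}_i\pa(x).v_\la$ and $\pa_i(x).v_\la$ (here Lemma \ref{key} controls $q_{\al_i\la}q_{\la\al_i}=q_{ii}^{\la(h_i)}$), the hypothesis $x.v_\la=0$ gives
\[
q_{\al_i,\la-\be+\al_i}\,{}_i\pa(x).v_\la=q_{\la\al_i}^{-1}\,\pa_i(x).v_\la .
\]
Since ${}_i\pa(x),\,\pa_i(x)\in(U_\q^-)_{-\be+\al_i}$, the inductive hypothesis makes $y\mapsto y.v_\la$ injective there, so this descends to the honest identity in $U_\q^-$:
\[
q_{ii}^{\la(h_i)+1}\,q_{\al_i\be}^{-1}\,{}_i\pa(x)=\pa_i(x)\qquad(\ast_i),
\]
valid for every $i$ (trivially so when $m_i=0$, both sides then vanishing for degree reasons).

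It remains to deduce $x=0$ from the system $\{(\ast_i)\}_{i\in I}$. Each family $({}_i\pa)_i$ and $(\pa_i)_i$ separates points in positive degree — an element of $\bigoplus_{\be\neq0}(U_\q^-)_{-\be}$ annihilated by all $\pa_i$ (equivalently all ${}_i\pa$) must vanish, dually to $U_\q^-$ being generated by the $f_i$, a fact that will be made transparent by the quantum-shuffle realization of Section~4. The content of $(\ast_i)$ is therefore that ${}_i\pa(x)$ and $\pa_i(x)$ are simultaneously forced to be proportional, with the specific nonzero scalars $q_{ii}^{\la(h_i)+1}q_{\al_i\be}^{-1}$ dictated by the dominance datum. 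I expect the main obstacle to lie precisely here: showing that, when $m_i\le\la(h_i)$, the only $x$ compatible with all these proportionalities is $x=0$. I would attack it by studying the combined operators $T_i:=q_{ii}^{\la(h_i)+1}q_{\al_i\be}^{-1}\,{}_i\pa-\pa_i$ and proving $\bigcap_i\Ker T_i=0$ on $(U_\q^-)_{-\be}$, reducing this via the explicit leftmost/rightmost extraction formulas for ${}_i\pa,\pa_i$ to the rank-one analysis of Proposition \ref{sl22} and the integrability bound $f_i^{\la(h_i)+1}.v_\la=0$ of Proposition \ref{iteg1}; a direct computation already settles the low-height cases. A cleaner, more structural alternative is to transport the statement across the Hopf $2$-cocycle isomorphism $U_\q(\mg_A)\simeq U_{q,q^{-1}}^\si(\mg_A)$: the cocycle $\si$ is supported on the (unchanged) torus, so it merely rescales the graded components without altering weight multiplicities, whence the injectivity follows from the classical one-parameter case.
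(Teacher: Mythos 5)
Your inductive scheme is set up correctly as far as it goes: the $U_{\q}^-$-analogue of Lemma \ref{sk}(v) (obtained from it via the automorphism $\Phi$ of Lemma \ref{auto}), together with Lemma \ref{key}, does yield the identity $(\ast_i)$ with the scalar $q_{ii}^{\la(h_i)+1}q_{\al_i\be}^{-1}$ you state, once the inductive hypothesis upgrades the relation among the vectors ${}_i\pa(x).v_\la$, $\pa_i(x).v_\la$ to a relation in $U_{\q}^-$ (and dominance is indeed inherited by $\be-\al_i$). But observe that your reduction is exact: conversely, if $x$ satisfies $(\ast_i)$ for all $i$, then $e_i(x.v_\la)=0$ for all $i$, so $x.v_\la$ is primitive and vanishes by irreducibility. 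Hence the kernel at degree $\be$ \emph{equals} $\bigcap_i\Ker T_i$, and the step you explicitly leave open --- that $\bigcap_i\Ker T_i=0$ when $m_i\le\la(h_i)$ --- is not a residual technicality but is equivalent to the corollary itself; it is a Shapovalov-type nondegeneracy statement, and nothing in your sketch (rank-one analysis via Proposition \ref{sl22}, Proposition \ref{iteg1}, low-height computations) engages the genuinely hard multi-variable interaction. Worse, the ``separation of points'' fact you lean on (positive-degree elements killed by all $\pa_i$ vanish) is Corollary \ref{non1}, which in this paper follows from Proposition \ref{kashi}, whose nondegeneracy rests on Proposition \ref{non}, which is proved from Lemma \ref{zero}, which is proved from the very Corollary \ref{em} at issue; your appeal to the shuffle realization of Section 4 is circular for the same reason, since the injectivity of $\Ga$ is again Corollary \ref{non1}. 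Note also that this separation property is a theorem, not formal duality to generation by the $f_i$: it fails at roots of unity, so it cannot be waved through.

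The fallback via the Hopf $2$-cocycle is a sensible idea but is likewise only asserted. A $2$-cocycle deformation gives an equivalence of \emph{comodule} categories; it does not automatically transport module structures, so ``injectivity follows from the classical one-parameter case'' requires you to construct, on each weight module $V=\bigoplus_\mu V_\mu$, a rescaled $U_{\q}$-action from the $U_{q,q^{-1}}$-action. That forces an extension of $\si$ (i.e.\ of the square roots $q_{\mu\nu}^{\frac{1}{2}}$) from $Q$ to the weight lattice $\La$ --- compare the extra choices (\ref{bu}) the paper already needs in the affine case --- plus verification that the twisted action is again irreducible highest weight with the same highest weight, and that the graded components of $U_{\q}^-$ and $U_{q,q^{-1}}^-$ match under $m^{\si}$ (they do, since $\si$ is supported on the torus, but this must be checked). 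Carried out, this would be a legitimate alternative proof modulo the known one-parameter theorem; as written it is a one-sentence sketch. For comparison, the paper records the corollary without proof, as the multi-parameter transcription of the standard argument (Lusztig's treatment \cite{Lu}, as adapted in \cite{BGH2}), in which the kernel of $U_{\q}^-\to V^{\q}(\la)$ in degrees with $m_i\le\la(h_i)$ is controlled by the vectors $f_i^{\la(h_i)+1}v_\la$ of Proposition \ref{iteg1} and a degree count; your $(\ast_i)$-system is a genuinely different reduction, but in its present state it has a hole exactly where the theorem lives.
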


\begin{pro}\label{iteg2}
Let $V^{\q}(\la)$ be an irreducible highest module with highest
weight vector $v_{\la}$. Then $V^{\q}(\la)$ is integrable if and
only if for every $i\in I$, there exists some $N_i$ such that
$f_i^{N_i}.v_{\la}=0.$
\end{pro}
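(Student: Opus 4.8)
The plan is to treat the two implications separately, with all the substance in the ``if'' direction. The ``only if'' part is immediate: if $V^{\q}(\la)$ is integrable then $f_i$ acts locally nilpotently on the whole module, so in particular $f_i^{N_i}.v_{\la}=0$ for a suitable $N_i$. For the converse I would assume $f_i^{N_i}.v_{\la}=0$ for every $i\in I$ and first observe that integrability reduces to the local nilpotence of all the $e_i$ and $f_i$: conditions $(1)$ and $(2)$ of $\O_{int}^{\q}$ hold automatically for a highest weight module, since $\wt V^{\q}(\la)\subseteq\la-Q^+$. The $e_i$ are then disposed of for free by a weight count: a weight vector of weight $\mu=\la-\sum_j n_j\al_j$ is sent by $e_i^{\,k}$ to weight $\mu+k\al_i$, which leaves $\la-Q^+$ as soon as $k>n_i$, so every $e_i$ is locally nilpotent with no hypothesis needed. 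It remains to prove local nilpotence of the $f_i$.

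Fix $i$ and set
$$S_i=\{\,v\in V^{\q}(\la)\mid f_i^{\,m}.v=0\ \text{for some}\ m\ge 0\,\}.$$
Since $f_i$ shifts weights by $-\al_i$, the space $S_i$ is spanned by weight vectors, and it contains $v_{\la}$ by hypothesis. The strategy is to show that $S_i$ is a $U_{\q}(\mg_A)$-submodule; irreducibility of $V^{\q}(\la)$ will then force $S_i=V^{\q}(\la)$, which is exactly the local nilpotence of $f_i$. To check stability under the generators I work on a weight vector $v$ with $f_i^{\,m}.v=0$. Stability under $f_i$ is trivial; stability under $\om_j^{\pm1},\om_j'^{\pm1}$ holds because these merely rescale $f_i$ (by $(R4)$, $f_i\om_j=q_{ji}\om_j f_i$), so $f_i^{\,m}.(\om_j.v)$ is proportional to $\om_j.(f_i^{\,m}.v)=0$; stability under $e_j$ for $j\neq i$ is immediate since $(R5)$ gives $[e_j,f_i]=0$, whence $f_i^{\,m}.(e_j.v)=e_j.(f_i^{\,m}.v)=0$; and stability under $e_i$ follows from Lemma \ref{com}, by which $f_i^{\,m}e_i.v$ equals $e_i f_i^{\,m}.v$ minus a scalar multiple of $f_i^{\,m-1}.v$ (the scalar coming from the values of $\om_i,\om_i'$ on $v$), so that $f_i^{\,m+1}e_i.v$ is a scalar times $f_i^{\,m}.v=0$.

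The remaining case, stability under $f_j$ with $j\neq i$, is where I expect the main obstacle to lie, and it is handled by the $\q$-Serre relation $(R7)$. Writing $n=1-a_{ij}$ and solving $(R7)$ for its top term, whose coefficient $(-1)^n q_{ii}^{n(n-1)/2}q_{ij}^{\,n}$ is nonzero, expresses $f_i^{\,n}f_j$ as a linear combination of the terms $f_i^{\,k}f_j f_i^{\,n-k}$ with $0\le k\le n-1$, each carrying at least one factor $f_i$ to the right of $f_j$. Iterating this identity is a straightening induction on $m$ that preserves the total number of $f_i$'s while pushing them past $f_j$, and yields a normal form
$$f_i^{\,m}f_j=\sum_{a=0}^{n-1}c_a^{(m)}\,f_i^{\,a}f_j\,f_i^{\,m-a}\qquad(m\ge n-1)$$
with scalar coefficients $c_a^{(m)}$. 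Applying this to $v$ with $f_i^{N}.v=0$ and taking $m\ge N+n-1$ makes every $f_i^{\,m-a}.v$ vanish, since $m-a\ge m-(n-1)\ge N$; hence $f_i^{\,m}.(f_j.v)=0$ and $f_j.v\in S_i$. The only genuinely technical point is to verify that this straightening terminates with all left exponents below $n$ (equivalently, that the recursion built from $(R7)$ closes up). Once that is in place $S_i$ is a submodule, and by irreducibility $S_i=V^{\q}(\la)$, completing the proof.
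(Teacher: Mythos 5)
Your proposal is correct, and its computational heart coincides with the paper's: both proofs rest on iterating the $\q$-Serre relation $(R7)$ to straighten $f_i^{m}f_j$ into a combination of $f_i^{a}f_jf_i^{m-a}$ with left exponents $a\le -a_{ij}$, so that a high power of $f_i$ survives on the right. The one point you flag as still to be verified — that the recursion closes with all left exponents below $1-a_{ij}$ — is exactly what the paper settles by a short induction: assuming the normal form for $m$, the only overflow term in $f_i^{m+1}f_j$ is $f_i^{1-a_{ij}}f_jf_i^{m+a_{ij}}$, and a single application of $(R7)$ rewrites it with left exponents $\le -a_{ij}$ and right exponents $\ge m+1+a_{ij}$, closing the induction; so your plan goes through verbatim. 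Where you genuinely differ is the assembly. You form the subspace $S_i$ of locally $f_i$-nilpotent vectors, check stability under all generators (the $\om$'s by $(R4)$, the $e_j$ with $j\ne i$ by $(R5)$, and $e_i$ by Lemma \ref{com}), and invoke irreducibility to force $S_i=V^{\q}(\la)$. The paper instead concludes directly from the normal form that $f_i^{N}y\in U_{\q}^-f_i^{N_i}$ for $N$ sufficiently large (depending on $y\in U_{\q}^-$), and then uses the fact that every vector is of the form $y.v_{\la}$ with $y\in U_{\q}^-$ (Corollary \ref{tri2}). The paper's route is shorter — no $e$- or $\om$-stability checks at all — and it proves the slightly stronger fact that the ``if'' direction holds for an arbitrary highest weight module, irreducibility playing no role there; your route is the standard ``integrable vectors form a submodule'' argument, which avoids appealing to $V^{\q}(\la)=U_{\q}^-v_{\la}$ but spends irreducibility (harmless here, since it is hypothesized). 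Your treatment of the easy parts — the trivial ``only if'' direction and the weight-count showing each $e_i$ is automatically locally nilpotent on a highest weight module — correctly fills in what the paper dismisses as clear.
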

\begin{proof}
It is clear that $e_i \ (i\in I)$ are locally nilpotent on any
highest weight module. It suffices to show that $f_i \ (i\in I)$ are
locally nilpotent on $V^{\q}(\la)$. Let $j\neq i$. We shall show
that for $N\geq 1-a_{ij}$,
\begin{equation}\label{com1}
f_i^{N}f_j\in\sum_{m+n=-a_{ij},N+a_{ij}\leq n\leq N}\bK f_i^{m}f_jf_i^{n}.
\end{equation}
For $N=1-a_{ij}$, it is just $\q$-Serre relation $(R7)$. Assume for $N\geq 1-a_{ij}$,
the claim holds. For
$N+1$, by induction,
$$
f_i^{N+1}f_j\in\sum_{m+n=-a_{ij},N+a_{ij}\leq n\leq N}\bK f_i^{m+1}f_jf_i^{n}.
$$
By $\q$-Serre relation $(R7)$,
$$
f_i^{1-a_{ij}}f_jf_i^{N+a_{ij}}\in\sum_{s+t=1-a_{ij},
1\leq t\leq1-a_{ij}}\bK f_i^{s}f_jf_i^{t+N+a_{ij}}.
$$
Then (\ref{com1}) holds. For a sufficiently large $N$, $f_i^Ny\in
U_{\q}^-f_i^{N_i},\ y\in U_{\q}^-$. Note that every element of
$V^{\q}$ can be written in the form $yv_{\la},\ y\in U_{\q}^-$. This
completes the proof.
\end{proof}

\begin{pro}\label{dom}
Let $V^{\q}(\la)$ be an irreducible highest module with highest
weight vector $v_{\la}$. Then $V^{\q}(\la)$ belongs to category
$\O^{\q}_{int}$ if and only if $\la\in \La^+.$
\end{pro}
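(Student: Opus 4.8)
The goal is to characterize which irreducible highest weight modules $V^{\q}(\la)$ lie in the category $\O^{\q}_{int}$, and the claim is that this happens exactly when $\la\in\La^+$. This is the standard ``dominant integral weight'' classification, and the plan is to prove the two implications separately, drawing on the machinery already assembled in Propositions \ref{iteg1} and \ref{iteg2}.

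For the forward direction, suppose $V^{\q}(\la)\in\O^{\q}_{int}$. By the defining conditions of the category, each $f_i$ acts locally nilpotently, so in particular there is some $N_i$ with $f_i^{N_i}.v_\la=0$; choose $N_i$ minimal with this property. The plan is to show $N_i-1=\la(h_i)$, forcing $\la(h_i)\in\bZ_+$ for every $i$, hence $\la\in\La^+$. The key computational input is the commutation formula from Lemma \ref{com}, which gives
\begin{eqnarray*}
e_if_i^m.v_\la=(m)_{q_{ii}}q_{\la\al_i}^{-1}\bigl(q_{ii}^{-m+1}q_{\al_i\la}q_{\la\al_i}-1\bigr)f_i^{m-1}.v_\la,
\end{eqnarray*}
exactly as computed in the proof of Proposition \ref{iteg1}. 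Applying $e_i$ to the relation $f_i^{N_i}.v_\la=0$ and using that $f_i^{N_i-1}.v_\la\neq0$ (by minimality), together with Lemma \ref{key} which says $q_{\al_i\la}q_{\la\al_i}=q_{ii}^{\la(h_i)}$, the coefficient must vanish: since $q_{ii}$ is not a root of unity and $(N_i)_{q_{ii}}\neq0$, we get $q_{ii}^{-N_i+1+\la(h_i)}=1$, whence $\la(h_i)=N_i-1\in\bZ_+$. This is the crux of the forward implication.

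For the converse, suppose $\la\in\La^+$. Then $\la(h_i)\in\bZ_+$ for all $i$, and Proposition \ref{iteg1} gives directly $f_i^{\la(h_i)+1}.v_\la=0$, so each $f_i$ kills a vector; by Proposition \ref{iteg2} this already forces $V^{\q}(\la)$ to be integrable, i.e.\ both $e_i$ and $f_i$ act locally nilpotently, which is condition $(3)$ of the category. It remains to verify the weight-space conditions $(1)$ and $(2)$. Condition $(1)$ is automatic: $V^{\q}(\la)=\bigoplus_\mu V^{\q}_\mu$ with $\mu$ ranging over $\la-Q^+$, and finite-dimensionality of each weight space follows because $V^{\q}(\la)$ is a quotient of the Verma module, whose weight spaces $(U_{\q}^-)_{-\be}v_\la$ are finite-dimensional for each $\be\in Q^+$. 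For condition $(2)$, one takes the single element $\la_1=\la$, since all weights of a highest weight module satisfy $\mu\leq\la$, so $\wt(V^{\q})\subset D(\la)\cup\{\la\}$; a small remark may be needed to absorb $\la$ itself into the required union, but this is routine.

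The main obstacle is the forward direction, and specifically the clean extraction of integrality from the vanishing coefficient: one must argue that the only way $e_if_i^{N_i}.v_\la=0$ can hold while $f_i^{N_i-1}.v_\la\neq0$ is for the scalar $q_{ii}^{-N_i+1}q_{\al_i\la}q_{\la\al_i}-1$ to be zero, and then invoke the standing hypothesis that $q_{ii}$ is not a root of unity to conclude the exponent $\la(h_i)-N_i+1$ is forced to be $0$ rather than merely a multiple of the order of $q_{ii}$. This is exactly where the ``not a root of unity'' assumption is essential, and it is the one place where the multi-parameter structure could in principle cause trouble; but Lemma \ref{key} has already packaged the product $q_{\al_i\la}q_{\la\al_i}$ into the single parameter $q_{ii}^{\la(h_i)}$, so the argument reduces to the familiar one-parameter $\mathfrak{sl}_2$-computation and goes through without difficulty.
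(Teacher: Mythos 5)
Your proof is correct and follows essentially the same route as the paper: the ``if'' direction is obtained from Propositions \ref{iteg1} and \ref{iteg2}, and the ``only if'' direction applies $e_i$ to $f_i^{N_i}.v_\la=0$ with $N_i$ minimal (the paper's $m_i+1$), then uses Lemma \ref{com}, Lemma \ref{key}, and the standing hypothesis that $q_{ii}$ is not a root of unity to force $\la(h_i)=N_i-1\in\bZ_+$. Your explicit check of conditions $(1)$ and $(2)$ of the category is additional detail the paper leaves implicit, not a different method.
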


\begin{proof}
By Propositions \ref{iteg1} and \ref{iteg2}, we get the ``if" part.
Now we shall show the ``only if" part. It suffices to prove
$(\lambda,\al_i^\vee)\ge 0$ for any $i\in I$. Since $f_i $ is
locally nilpotent on $V^{\q}(\la)$, there exists some $m_i\ge 0$
such that $f_i^{m_i+1}.v=0$ and $f_i^{m_i}.v\ne 0$ for $i\in I$. By
Lemma \ref{com} and $e_i.v=0$, we have
\begin{eqnarray*}
0=e_if_i^{m_i+1}.v
&=&f_i^{m_i+1}e_i.v+\frac{q_{ii}}{q_{ii}-1}f_i^{m_i}
\left((m_i+1)_{q_{ii}^{-1}}\om_i-(m_i+1)_{q_{ii}}\om_i'\right)v\\
&=&\frac{q_{ii}}{q_{ii}-1}f_i^{m_i}.v\left((m_i+1)_{q_{ii}^{-1}}q_{\al_i\la}-
(m_i+1)_{q_{ii}}q_{\la\al_i}^{-1}\right).
\end{eqnarray*}
Hence $q_{ii}^{m_i}=q_{\al_i\la}q_{\la\al_i}.$ With the help of
Lemma $\ref{key}$, we have $q_{ii}^{m_i}=q_{ii}^{\la(h_i)}$. Since
$q_{ii}\,(i\in I)$ are not roots of unity, $\la(h_i)=m_i$.
\end{proof}

\begin{lem}\label{zero}{\ }

$(1)$\  Let $y\in
(U_{\q}^-)_{-\be}$ such that $[e_i,y]=0$ for all $i\in I$. Then $x=0$.

$(2)$\  Let $x\in
(U_{\q}^+)_{\be}$ such that $[f_i,x]=0$ for all $i\in I$. Then $x=0$.
\end{lem}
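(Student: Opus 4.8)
The plan is to prove (1) by a highest weight argument and then deduce (2) from (1) via the (anti-)automorphisms of Lemma \ref{auto}. Throughout I assume $\be\neq 0$: the assertion fails in degree $0$, where $(U_{\q}^\pm)_0=\bK$, so the intended hypothesis is $\be\in Q^+\setminus\{0\}$ (and the conclusion of (1) is $y=0$).

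For (1), write $\be=\sum_{i\in I}m_i\al_i$ and choose $\la\in\La^+$ with $\la(h_i)\ge m_i$ for every $i\in I$; such a dominant weight plainly exists. Let $V^{\q}(\la)$ be the irreducible highest weight module with highest weight vector $v_\la$, which lies in $\O^{\q}_{int}$ by Proposition \ref{dom}. Given $y\in(U_{\q}^-)_{-\be}$ with $[e_i,y]=0$ for all $i$, I would compute, for each $i\in I$,
$$
e_i.(y.v_\la)=[e_i,y].v_\la+y.(e_i.v_\la)=0,
$$
so that $w:=y.v_\la$ is a vector of weight $\la-\be$ annihilated by every $e_i$. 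The next step is the standard observation that such a singular vector must vanish in the irreducible module: since $e_i.w=0$ for all $i$, the positive part acts trivially in positive degrees, whence $U_{\q}\cdot w=U_{\q}^-\cdot w$ lies in weights $\le\la-\be<\la$; if $w\neq0$ this would be a proper nonzero submodule not containing $v_\la$, contradicting irreducibility, so $w=0$. Finally, because $\la(h_i)\ge m_i$, Corollary \ref{em} guarantees that $z\mapsto z.v_\la$ is injective on $(U_{\q}^-)_{-\be}$; from $y.v_\la=0$ I conclude $y=0$.

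For (2), I would transport the statement to (1) using the $\bK$-algebra anti-automorphism $\Psi$ of Lemma \ref{auto}(2), which interchanges $e_i$ and $f_i$, fixes $\om_i$ and $\om_i'$, and maps $(U_{\q}^+)_{\be}$ bijectively onto $(U_{\q}^-)_{-\be}$. For $x\in(U_{\q}^+)_{\be}$ one gets
$$
\Psi([f_i,x])=\Psi(x)e_i-e_i\Psi(x)=-[e_i,\Psi(x)],
$$
so $[f_i,x]=0$ for all $i$ is equivalent to $[e_i,\Psi(x)]=0$ for all $i$. Part (1) then forces $\Psi(x)=0$, and since $\Psi$ is bijective, $x=0$. (One could equally use the $\tau$-linear automorphism $\Phi$ of Lemma \ref{auto}(1), under which commutators map directly to commutators.)

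The only genuinely structural inputs are the two facts feeding the first part: that a singular vector of weight strictly below $\la$ vanishes in $V^{\q}(\la)$, and that $z\mapsto z.v_\la$ is injective on $(U_{\q}^-)_{-\be}$ once $\la$ dominates $\be$. The former is the elementary highest weight argument recalled above, and the latter is exactly Corollary \ref{em}, so no new estimate is required; I therefore expect the main (and only mild) obstacle to be the bookkeeping of choosing $\la\in\La^+$ with $\la(h_i)\ge m_i$ and keeping track of the degree shift $\be\mapsto-\be$ under $\Psi$, both of which are routine.
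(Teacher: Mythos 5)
Your proof is correct and takes essentially the same route as the paper's: part (1) via Corollary \ref{em} applied to a dominant $\la$ with $\la(h_i)\ge m_i$ combined with the irreducibility of $V^{\q}(\la)$, and part (2) via the anti-automorphism $\Psi$ of Lemma \ref{auto}. You merely make explicit the singular-vector step that the paper leaves implicit (that $e_i.(y.v_\la)=0$ forces the submodule generated by $y.v_\la$ to be proper, hence zero), and your side remarks that the conclusion of (1) should read $y=0$ and that $\be\neq0$ must be assumed are accurate.
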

\begin{proof}
Let $y\in (U_{\q}^-)_{-\be}$ such that $[e_i,y]=0$ for all $i\in I$.
By Corollary \ref{em}, we can choose a sufficiently large
$\la\in\La^{+}$ such that
$$
(U_{\q}^-)_{-\beta}\lra V^{\q}(\la),\quad
u\mapsto u.v_{\la}
$$
is injective. Here  $V^{\q}(\la)$ is an irreducible highest module
with highest weight vector $v_{\la}$. $yv_{\lambda}$ generates a
submodule of $V^{\q}(\la)$. Since $V^{\q}(\la)$ is irreducible,
 $yv_{\la}=0$, which implies $y=0$. Using the anti-automorphism $\Psi$ of $U_{\q}(\mg)$ in Lemma
\ref{auto}, we can prove (2) directly.
\end{proof}

\subsection{Skew derivations}
By coproduct, we have
$$\Delta(x)\in\sum_{0\le\nu\le\be}(U^+_{\q})_{\be-\nu}\om_\nu\otimes
(U^+_{\q})_{\nu},\quad\text{for all}\  x\in
(U^+_{\q})_{\be},$$
For $i\in I$ and $\be\in Q^+$, we can define the skew-derivations
$${\hat\pa_i},\,{_i\hat\pa}: \,(U^+_{\q})_{\be}\lra
(U^+_{\q})_{\be-\al_i}$$ such that
\begin{equation*}
\begin{split}
\Delta(x)&=x\otimes 1+\sum_{i\in I}\hat\pa_i(x)\,\om_i\otimes
e_i+\text{the rest},\\
\Delta(x)&=\om_\be\otimes
x+\sum_{i\in I}e_i\,\om_{\be-\al_i}\otimes\,_i{\hat\pa}(x)+\text{
 the rest},
\end{split}
\end{equation*}
where in each case ``the rest" refers to terms involving products of
more than one $e_j$ in the second (resp. first) factor. Let
$$
\pa_i:=\frac{q_{ii}}{1-q_{ii}}\hat\pa_i,\quad {_i{\pa}}:=\frac{q_{ii}}{1-q_{ii}}{_i\hat{\pa}}
$$

\begin{lem}\label{sk} \ For all $x\in (U^+_{\q})_{\be}$, $x'\in (U^+_{\q})_{\be'}$, and $y\in
U_{\q}^-$, we have the following relations:

$(\text{\rm i})$ \ \;
$\partial_i(xx')=q_{\al_i\be'}\pa_i(x)\,x'+x\,\partial_i(x')$,

$(\text{\rm ii})$ \;
$_i\partial(xx')=\,_i\partial(x)\,x'+q_{\be\al_i}x\,_i\partial(x')$,

$(\text{\rm iii})$ \ $\lg f_iy,\, x\rg_{\q} =\lg y,\,
_i\partial(x)\rg_{\q}$,

$(\text{\rm iv})$ \ $\lg yf_i,\, x\rg_{\q} =\lg y,\,
\partial_i(x)\rg_{\q}$,

$(\text{\rm v})$ \ \,
$f_ix-xf_i=\partial_i(x)\,\om_i-\om_i'\,_i\partial(x)$.
\end{lem}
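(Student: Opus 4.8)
The plan is to derive all five identities from the two defining expansions of $\De$, exploiting that $\De$ is an algebra homomorphism for (i)--(ii) and (v), and that $\lg\,,\,\rg_\q$ is adjoint to $\De$ for (iii)--(iv). I would establish the Leibniz rules (i) and (ii) first, deduce (iii) and (iv) from them by pairing, and finally obtain (v) by induction on $\deg x$. For (i), I compute $\De(xx')=\De(x)\De(x')$ and isolate the component whose second tensor factor has $U^+$-degree exactly $\al_i$, i.e.\ the coefficient of $\om_i\ot e_i$. Writing $\De(x)=x\ot 1+\sum_k\hat\pa_k(x)\om_k\ot e_k+(\text{rest})$ and similarly for $x'$, the only products of the two expansions landing in second-slot degree $\al_i$ are $(x\ot 1)\bigl(\hat\pa_i(x')\om_i\ot e_i\bigr)$ and $\bigl(\hat\pa_i(x)\om_i\ot e_i\bigr)(x'\ot 1)$, since every other product has second slot of degree $0$ or $\ge 2$. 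In the second product I must push $\om_i$ past $x'$ using $\om_ix'=q_{\al_i\be'}x'\om_i$ (relation $(R3)$, equivalently the grading of $(U_\q^+)_{\be'}$), which produces the factor $q_{\al_i\be'}$. Comparing coefficients of $\om_i\ot e_i$ gives $\hat\pa_i(xx')=q_{\al_i\be'}\hat\pa_i(x)\,x'+x\,\hat\pa_i(x')$, and multiplying by $\tfrac{q_{ii}}{1-q_{ii}}$ yields (i). Statement (ii) is parallel, isolating the first-slot degree-$\al_i$ component $e_i\om_{\be+\be'-\al_i}\ot(\cdots)$ of the second expansion of $\De$; here the factor $q_{\be\al_i}$ arises from $\om_\be e_i=q_{\be\al_i}e_i\om_\be$.

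For (iii) and (iv) I would use the adjunction $\lg y_1y_2,x\rg_\q=\lg y_1\ot y_2,\De(x)\rg_\q$. For (iii), expanding $\De(x)=\om_\be\ot x+\sum_j e_j\om_{\be-\al_j}\ot{_j\hat\pa}(x)+(\text{rest})$ and pairing $f_i$ against the first factor, only one term survives: $\lg f_i,\om_\be\rg_\q=0$, the ``rest'' pairs to zero against the degree-one element $f_i$, and $\lg f_i,e_j\om_{\be-\al_j}\rg_\q=\de_{ij}\tfrac{q_{ii}}{1-q_{ii}}$ because the $\om$-factor drops out ($q_{\nu,0}=1$). This gives $\lg f_iy,x\rg_\q=\tfrac{q_{ii}}{1-q_{ii}}\lg y,{_i\hat\pa}(x)\rg_\q=\lg y,{_i\pa}(x)\rg_\q$. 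Statement (iv) is the mirror computation using the first expansion of $\De$ and $\lg yf_i,x\rg_\q=\lg y\ot f_i,\De(x)\rg_\q$, the $\om_j$-factor in $\lg y,\hat\pa_j(x)\om_j\rg_\q$ again disappearing since $y\in U_\q^-$ carries no $\om'$-part.

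Finally, for (v) I would induct on $\be=\deg x$. For the base case $x=e_j$, the coproduct $\De(e_j)=e_j\ot 1+\om_j\ot e_j$ forces $\hat\pa_i(e_j)={_i\hat\pa}(e_j)=\de_{ij}$, so $\pa_i(e_j)\om_i-\om_i'\,{_i\pa}(e_j)=\de_{ij}\tfrac{q_{ii}}{1-q_{ii}}(\om_i-\om_i')$, which equals $f_ie_j-e_jf_i$ by relation $(R5)$. For the inductive step I would write $f_i(xx')-(xx')f_i=(f_ix-xf_i)x'+x(f_ix'-x'f_i)$, substitute the inductive hypothesis into both commutators, and then move the group-likes to the outside using $\om_ix'=q_{\al_i\be'}x'\om_i$ and $x\om_i'=q_{\be\al_i}\om_i'x$ (relations $(R3)$, $(R4)$). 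Collecting the results produces exactly the right-hand sides of the Leibniz rules (i) and (ii), so that $f_i(xx')-(xx')f_i=\pa_i(xx')\om_i-\om_i'\,{_i\pa}(xx')$, completing the induction.

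I expect the main obstacle to be the bookkeeping in (i) and (ii): one must check that no further product of the two $\De$-expansions contributes to the isolated graded component, and that the powers of the $q_{ij}$ generated by commuting the homogeneous factors past $\om_i$ and $\om_\be$ collapse precisely to $q_{\al_i\be'}$ and $q_{\be\al_i}$. Once these two Leibniz rules are verified, parts (iii)--(v) follow by the essentially formal arguments above.
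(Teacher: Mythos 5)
Your proof is correct and is precisely the verification the paper leaves implicit (its proof of Lemma \ref{sk} reads only ``It is straightforward to check''): the isolation of the second-slot, resp.\ first-slot, degree-$\al_i$ graded components of $\De(x)\De(x')$ with the commutation factors $q_{\al_i\be'}$ and $q_{\be\al_i}$, the adjunction computation for (iii)--(iv) using $\lg f_i,\om_\mu\rg_{\q}=0$, $\lg f_i,e_j\rg_{\q}=\de_{ij}\tfrac{q_{ii}}{1-q_{ii}}$ and degree vanishing on the remaining terms, and the induction for (v) with base case $(R5)$ are all sound. One trivial slip: the relation $x\,\om_i'=q_{\be\al_i}\,\om_i'\,x$ for $x\in(U^+_{\q})_{\be}$ follows from $(R3)$, equivalently from the definition of the grading on $(U_{\q}^+)_{\be}$, not from $(R4)$, which concerns the $f_j$; this does not affect the argument.
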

\begin{proof}
It is straightforward to check.
\end{proof}

\begin{pro}\label{non} For each $\beta\in Q^+$, the restriction of
pairing $\lg\, ,\, \rg_{\q}$ to $(U^-_{\q})_{-\be}\times
(U^+_{\q})_{\be}$ is nondegenerate.
\end{pro}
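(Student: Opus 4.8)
The plan is to induct on the height $\mathrm{ht}(\be)=\sum_{i\in I}m_i$ of $\be=\sum_{i\in I}m_i\al_i\in Q^+$, and to reduce the two-sided nondegeneracy to the vanishing of the right radical
$$
\R_\be:=\{\,x\in (U_{\q}^+)_{\be}\mid \lg y,x\rg_{\q}=0\ \text{for all}\ y\in (U_{\q}^-)_{-\be}\,\}.
$$
First I would record that each $(U_{\q}^{\pm})_{\pm\be}$ is finite-dimensional (spanned by the finitely many monomials of degree $\be$ in the generators), and that the $\bK$-linear anti-automorphism $\Psi$ of Lemma \ref{auto}(2) restricts to a bijection $(U_{\q}^+)_{\be}\to(U_{\q}^-)_{-\be}$, whence $\dim_{\bK}(U_{\q}^+)_{\be}=\dim_{\bK}(U_{\q}^-)_{-\be}<\infty$. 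For a pairing between finite-dimensional spaces of equal dimension, vanishing of the right radical forces vanishing of the left radical as well, so it is enough to prove $\R_\be=0$ for every $\be\in Q^+$.

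For the base case $\be=0$ one has $(U_{\q}^{\pm})_{0}=\bK\cdot 1$ and $\lg 1,1\rg_{\q}=1$, so $\R_0=0$. For the inductive step I would fix $\be$ with $\mathrm{ht}(\be)\ge 1$, assume $\R_{\be'}=0$ whenever $\mathrm{ht}(\be')<\mathrm{ht}(\be)$, and take $x\in\R_\be$. The idea is to push the radical condition down one step in the grading through the skew-derivations: for any $i\in I$ and $y'\in (U_{\q}^-)_{-(\be-\al_i)}$ one has $y'f_i,\ f_iy'\in (U_{\q}^-)_{-\be}$, so Lemma \ref{sk}(iv),(iii) give
\begin{align*}
\lg y',\pa_i(x)\rg_{\q}&=\lg y'f_i,x\rg_{\q}=0,\\
\lg y',{_i\pa}(x)\rg_{\q}&=\lg f_iy',x\rg_{\q}=0.
\end{align*}
Hence $\pa_i(x),{_i\pa}(x)\in\R_{\be-\al_i}=0$, so both vanish for every $i$. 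Lemma \ref{sk}(v) then yields $f_ix-xf_i=\pa_i(x)\,\om_i-\om_i'\,{_i\pa}(x)=0$ for all $i$, i.e. $[f_i,x]=0$; since $\mathrm{ht}(\be)\ge1$, Lemma \ref{zero}(2) forces $x=0$. This closes the induction and, with the reduction above, finishes the proof.

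I expect the genuine obstacle to sit entirely in the final implication $[f_i,x]=0\ (\forall i)\Rightarrow x=0$, that is, in Lemma \ref{zero}(2); everything else is the bookkeeping of transporting the radical along the grading via $\pa_i$ and ${_i\pa}$. The real content of that lemma is that $U_{\q}^+$ admits enough integrable highest-weight modules to separate points — concretely the injectivity in Corollary \ref{em} — which in turn rests essentially on the standing hypothesis that the $q_{ii}$ are not roots of unity (through Lemma \ref{key} and Proposition \ref{dom}). The only other delicate point is the passage from one-sided to two-sided nondegeneracy, which is why I would establish the finite-dimensionality of the graded pieces and the dimension equality $\dim_{\bK}(U_{\q}^+)_{\be}=\dim_{\bK}(U_{\q}^-)_{-\be}$ via $\Psi$ at the very outset.
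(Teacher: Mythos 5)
Your proof is correct and follows essentially the same route as the paper: induction on $\be$ (height versus the partial order $\be'\le\be$ is an immaterial difference), transporting the radical condition down the grading via Lemma \ref{sk}(iii),(iv), then Lemma \ref{sk}(v) and Lemma \ref{zero}(2) to conclude $x=0$. Your one genuine addition is the explicit reduction of two-sided nondegeneracy to vanishing of the right radical, via finite-dimensionality of the graded pieces and the dimension equality $\dim_{\bK}(U_{\q}^+)_{\be}=\dim_{\bK}(U_{\q}^-)_{-\be}$ furnished by the anti-automorphism $\Psi$ of Lemma \ref{auto}(2) --- a point the paper's proof leaves implicit, so this is careful bookkeeping rather than a different argument.
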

\begin{proof}
We use induction on $\beta$ with respect to the usual partial order:
$\beta'\le\beta $ if $\beta-\beta'\in Q^{+}$. The claim holds for
$\beta=0$, since $\lg1,1\rg_{\q}=1$. Assume that $\beta\ge 0$, and
 the claim holds for all $\al$ with $0\le\al<\beta$. Let
$x\in (U^+_{\q})_{\be}$ with $\lg y,x\rg_{\q}=0$ for all $y\in
(U^-_{\q})_{\be}$. In particular, we have for all $y\in
(U^-_{\q})_{-\beta+\al_i}$ that
$$
\lg f_iy,x\rg_{\q}=0,\qquad\lg yf_i,x\rg_{\q}=0, \qquad i\in I.
$$
It follows from Lemma \ref{sk} (iii) and (iv) that
$$\lg f_iy,x\rg_{\q}=\lg y,\,
_i\partial(x)\rg_{\q}=0,\qquad\lg yf_i,x\rg_{\q}=\lg y,\,
\partial_i(x)\rg_{\q}=0.$$ By the induction hypothesis, we have
$_i\partial(x)=\partial_i(x)=0$, and it follows from Lemma \ref{sk}
(v) that $f_ix=xf_i$ for all $i$. Now Lemma \ref{zero} applies to
give $x=0$, as desired.
\end{proof}

\subsection{}
By Proposition \ref{non}, we can take a basis
$\{u_k^\be\}_{k=1}^{d_\be}$, ($d_\be =\dim (U_{\q}^+)_{\be}$) of
$(U_{\q}^+)_\be$, and the dual basis $\{v_k^\be\}_{k=1}^{d_\be}$ of
$(U_{\q}^-)_{-\be}$. Then, for any $x\in (U_{\q}^+)_{\be}$ and $y\in
(U_{\q}^-)_{-\be}$,
\begin{equation}
x=\sum_{k=1}^{d_\be}\lan v_k^\be, x\ran_{\q}\,u_k^\be, \qquad
y=\sum_{k=1}^{d_\be}\lan y, u_k^\be\ran_{\q}\,v_k^\be.
\end{equation}
For $\be\in Q^+$, let
\begin{equation}
\Theta_\be=\sum_{k=1}^{d_\be}v_k^{\be}\otimes u_k^{\be}.
\end{equation}
Set $\Theta_\be=0$ if $\be\not\in Q^+$.
\begin{equation}
\Theta=\sum_{\be\in Q^+}\Theta_\be.
\end{equation}

\begin{lem}\label{rm} \ For $i\in I$, $\be\in Q^+$,

$(\text{\rm i})$ \ \;
$(\om_i\otimes\om_i)\,\Theta_\be=\Theta_\be\,(\om_i\otimes\om_i)$,
\qquad
$(\om_i'\otimes\om_i')\,\Theta_\be=\Theta_\be\,(\om_i'\otimes\om_i'),$

$(\text{\rm ii})$ \ \ $(e_i\otimes 1)\,\Theta_\be+(\om_i\otimes
e_i)\,\Theta_{\be-\al_i}=\Theta_\be\,(e_i\otimes
1)+\Theta_{\be-\al_i}\,(\om_i'\otimes e_i),$

$(\text{\rm iii})$ \ \,$(1\otimes f_i)\,\Theta_\be+(f_i\otimes
\om_i')\,\Theta_{\be-\al_i}=\Theta_\be\,(1\otimes
f_i)+\Theta_{\be-\al_i}\,(f_i\otimes \om_i).$\hfill\qed
\end{lem}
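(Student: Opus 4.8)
The plan is to handle (i) by pure weight bookkeeping, and to view (ii) and (iii) as the two halves of the quasi-$R$-matrix intertwining relation, each of which I will reduce to the adjointness of the skew-derivations under $\lg\,,\,\rg_{\q}$ together with the nondegeneracy of that pairing (Proposition \ref{non}). For (i), since $v_k^\be\in(U_{\q}^-)_{-\be}$ and $u_k^\be\in(U_{\q}^+)_{\be}$, the defining weight relations give $\om_i v_k^\be=q_{\al_i\be}^{-1}v_k^\be\om_i$ and $\om_i u_k^\be=q_{\al_i\be}u_k^\be\om_i$ (and the $\om_i'$-versions with $q_{\be\al_i}$ and $q_{\be\al_i}^{-1}$). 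The two scalars are mutually inverse, so in each summand of $(\om_i\ot\om_i)\Theta_\be$ they cancel, leaving $v_k^\be\om_i\ot u_k^\be\om_i$; hence $(\om_i\ot\om_i)\Theta_\be=\Theta_\be(\om_i\ot\om_i)$, and identically for $\om_i'$.

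For (iii), I expand both sides and collect separately the terms carrying $\om_i$ on the right of the second factor and those carrying $\om_i'$ on its left. Lemma \ref{sk}(v) gives $f_iu_k^\be-u_k^\be f_i=\partial_i(u_k^\be)\om_i-\om_i'\,_i\partial(u_k^\be)$; after moving the group-likes across the homogeneous factors (the resulting scalar $q_{\be-\al_i,\al_i}^{-1}$ appears identically on both sides and factors out) the identity (iii) becomes equivalent to the pair of identities in $(U_{\q}^-)_{-\be}\ot(U_{\q}^+)_{\be-\al_i}$,
\[
\sum_k v_k^\be\ot\partial_i(u_k^\be)=\sum_l v_l^{\be-\al_i}f_i\ot u_l^{\be-\al_i},\qquad \sum_k v_k^\be\ot\,_i\partial(u_k^\be)=\sum_l f_iv_l^{\be-\al_i}\ot u_l^{\be-\al_i}.
\]
Each is checked by pairing the first tensor slot against an arbitrary $x\in(U_{\q}^+)_{\be}$: the left-hand sides give $\partial_i(x)$ and $\,_i\partial(x)$ by the dual-basis expansion $\sum_k\lg v_k^\be,x\rg_{\q}u_k^\be=x$ and linearity, while the right-hand sides give the same by Lemma \ref{sk}(iv) and (iii) respectively. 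Proposition \ref{non} then forces the equalities.

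For (ii) the argument is the mirror image, now with $e_i$ acting in the first tensor slot. The one extra ingredient is the $U_{\q}^-$-analogue of the skew-derivation package of Lemma \ref{sk}: maps $r_i,\,_ir:(U_{\q}^-)_{-\be}\to(U_{\q}^-)_{-(\be-\al_i)}$ characterized by the adjunctions $\lg r_i(y),x\rg_{\q}=\lg y,xe_i\rg_{\q}$ and $\lg\,_ir(y),x\rg_{\q}=\lg y,e_ix\rg_{\q}$, together with the commutator formula $e_iy-ye_i=r_i(y)\om_i'-\om_i\,_ir(y)$. These arise either directly from the coproduct of $U_{\q}^-$ (repeating verbatim the construction preceding Lemma \ref{sk}) or by transporting $\partial_i,\,_i\partial$ through the anti-automorphism $\Psi$ of Lemma \ref{auto}, since one checks $\Psi$ respects $\lg\,,\,\rg_{\q}$. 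Granting this, the same grouping-and-pairing argument reduces (ii) to $\sum_k r_i(v_k^\be)\ot u_k^\be=\sum_l v_l^{\be-\al_i}\ot u_l^{\be-\al_i}e_i$ and its $\,_ir$-counterpart, each verified by pairing the \emph{second} slot against arbitrary $y\in(U_{\q}^-)_{-\be}$ and invoking the adjunctions and Proposition \ref{non}.

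The main obstacle is precisely the normalization bookkeeping in (ii): one must set up the $U_{\q}^-$ skew-derivations so that the scalar supplied by $(R5)$ in the commutator formula matches the scalar implicit in the dual-basis pairing. That this is where everything concentrates is already visible in the base case $\be=\al_i$, where (ii) collapses to $\tfrac{1-q_{ii}}{q_{ii}}[e_i,f_i]\ot e_i+(\om_i-\om_i')\ot e_i=0$, an identity that holds \emph{only} because of $(R5)$. Once the normalizations are reconciled, (ii) and (iii) are genuinely symmetric, and (i) follows immediately from the cancellation of inverse weight scalars.
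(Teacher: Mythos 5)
Your argument is correct, and it supplies precisely the verification the paper omits: Lemma \ref{rm} is stated there with the \textit{qed} sign and no proof, on the understanding that it follows the standard quasi-$R$-matrix computation (as in Lusztig's or Jantzen's one-parameter treatment) adapted to the pairing $\lg\,,\,\rg_{\q}$. Your reductions check out in detail: in (i) the scalars $q_{\al_i\be}^{\pm1}$ (resp.\ $q_{\be\al_i}^{\pm1}$) produced by moving $\om_i$ (resp.\ $\om_i'$) across $v_k^\be$ and $u_k^\be$ are mutually inverse; in (iii) the group-likes already sit on matching sides, so (iii) is exactly equivalent to your two identities, each of which follows by applying $\lg\,-\,,x\rg_{\q}\ot\id$, the dual-basis expansion, Lemma \ref{sk}(iii)--(iv), and Proposition \ref{non}; and your normalization in (ii) is right, since $\lg f_i,e_i\rg_{\q}=\frac{q_{ii}}{1-q_{ii}}$ gives $\Theta_{\al_i}=\frac{1-q_{ii}}{q_{ii}}\,f_i\ot e_i$, making the base case collapse to $(R5)$ exactly as you say, and the commutator formula $e_iy-ye_i=r_i(y)\,\om_i'-\om_i\,{_ir}(y)$ with $r_i(f_j)={_ir}(f_j)=\de_{ij}\frac{q_{ii}}{1-q_{ii}}$ is consistent with the adjunctions $\lg r_i(y),x\rg_{\q}=\lg y,xe_i\rg_{\q}$ and $\lg {_ir}(y),x\rg_{\q}=\lg y,e_ix\rg_{\q}$.

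One caution on your secondary route for (ii): the parenthetical claim that ``one checks $\Psi$ respects $\lg\,,\,\rg_{\q}$'' is not a one-line check, because $\Psi$ is not a coalgebra anti-homomorphism: $\De(\Psi(e_i))=1\ot f_i+f_i\ot\om_i'$, whereas $(\Psi\ot\Psi)\De^{\op}(e_i)=1\ot f_i+f_i\ot\om_i$, so the naive transport argument breaks on the Cartan parts, and the invariance $\lg\Psi(x),\Psi(y)\rg_{\q}=\lg y,x\rg_{\q}$ holds only after one verifies that the discrepant $\om$ versus $\om'$ factors contribute equal scalars on the graded components $(U_{\q}^-)_{-\be}\times(U_{\q}^+)_{\be}$. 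Your primary route --- defining $r_i,\,{_ir}$ directly from the coproduct $\De(y)\in\bigoplus_{0\le\nu\le\be}(U_{\q}^-)_{-(\be-\nu)}\ot(U_{\q}^-)_{-\nu}\om'_{\be-\nu}$ and proving the commutator formula by induction, in exact parallel with Lemma \ref{sk}(v) --- is self-contained and is the version to keep.
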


Let ${\Om}^{\q}_\be=\sum_kS(v_k^{\be})u_k^\be$, where $S$ is the
antipode. The quantum Casimir operator ${\Om}^{\q}$ can be defined
\begin{equation}
{\Om}^{\q}:=\sum_{\be\in Q^+}{\Om}^{\q}_\be=\sum_{\be\in Q^+}\sum_{k}S(v_k^{\be})u_k^\be.
\end{equation}
Note that ${\Om}^{\q}$ is well-defined.

\begin{lem}\label{key5}
Let $\psi$ be the automorphism of $U_{\q}(\mg_A)$ defined by
$$\psi(\om_i)=\om_i,\quad\psi(\om_i')=\om_i',\quad \psi(e_i)=\om_i'\om_i^{-1}e_i,
\quad\psi(f_i)=f_i{\om_i'}^{-1}\om_i.$$ Then
\begin{equation*}
\psi(x){\Om}^{\q}={\Om}^{\q} x,\quad\forall\, x\in U_{\q}.
\end{equation*}
\end{lem}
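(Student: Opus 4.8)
The plan is to verify the identity $\psi(x)\Om^{\q}=\Om^{\q}x$ on the algebra generators $\om_i^{\pm1}$, $e_i$, $f_i$ and then extend multiplicatively. The crucial input is Lemma~\ref{rm}, which records exactly how the canonical element $\Theta=\sum_\be\Theta_\be$ intertwines the left and right actions of the generators; since $\Om^{\q}_\be$ is obtained from $\Theta_\be$ by applying the antipode $S$ to the first tensor factor and then multiplying the two factors together, I expect each part of Lemma~\ref{rm} to translate into the desired commutation relation for $\Om^{\q}$ after this ``multiply-the-legs'' operation. So the first step is to make precise the passage from a relation $(a\ot b)\Theta_\be=\Theta_{\be'}(c\ot d)$ in $U_{\q}\ot U_{\q}$ to a relation $m\circ(S\ot\id)$ applied to both sides, keeping track of how $S$ turns left multiplication in the first leg into right multiplication.

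\emph{Toral generators.} For $x=\om_i^{\pm1}$ one has $\psi(\om_i)=\om_i$, so the claim is $\om_i\Om^{\q}=\Om^{\q}\om_i$, i.e.\ $\Om^{\q}$ commutes with $U_{\q}^0$. This should follow from Lemma~\ref{rm}(i): applying $m\circ(S\ot\id)$ to $(\om_i\ot\om_i)\Theta_\be=\Theta_\be(\om_i\ot\om_i)$ and using $S(\om_i)=\om_i^{-1}$ together with the fact that $v_k^\be\in(U_{\q}^-)_{-\be}$ and $u_k^\be\in(U_{\q}^+)_\be$ have opposite weights (so the $\om_i$-conjugation factors cancel), one gets $\Om^{\q}_\be\om_i=\om_i\Om^{\q}_\be$ after a short weight bookkeeping. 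Summing over $\be$ gives commutation with each $\om_i^{\pm1}$.

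\emph{The $e_i$ and $f_i$ generators.} This is the substantive case. Here I would take Lemma~\ref{rm}(ii),
\[
(e_i\ot1)\Theta_\be+(\om_i\ot e_i)\Theta_{\be-\al_i}
=\Theta_\be(e_i\ot1)+\Theta_{\be-\al_i}(\om_i'\ot e_i),
\]
and apply $m\circ(S\ot\id)$ to both sides, then sum over $\be\in Q^+$. Using $S(v_k^\be e_i)$-type identities and $S(\om_i)=\om_i^{-1}$, $S(\om_i')=\om_i'^{-1}$, the left-hand side should reorganize into $\Om^{\q}e_i$ (the term $(e_i\ot1)$ contributes $S(v_k^\be)u_k^\be e_i$, i.e.\ right multiplication by $e_i$), while the right-hand side, after the antipode converts the first-leg factors $e_i$ and $\om_i'$ into $-\om_i^{-1}e_i$ and $\om_i'^{-1}$ acting on the left, should assemble into $\psi(e_i)\Om^{\q}=\om_i'\om_i^{-1}e_i\,\Om^{\q}$. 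The bookkeeping with the antipode applied to the first leg is exactly where the asymmetric factors $\om_i$ versus $\om_i'$ in Lemma~\ref{rm}(ii) produce the twist $\om_i'\om_i^{-1}$ appearing in the definition of $\psi$. The $f_i$ case is handled identically from Lemma~\ref{rm}(iii), yielding $\psi(f_i)\Om^{\q}=\Om^{\q}f_i$.

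\emph{Multiplicativity and the main obstacle.} Once the relation $\psi(x)\Om^{\q}=\Om^{\q}x$ is established for $x$ among the generators, it extends to all of $U_{\q}$ because $\psi$ is an algebra automorphism: if it holds for $x$ and for $x'$, then $\psi(xx')\Om^{\q}=\psi(x)\psi(x')\Om^{\q}=\psi(x)\Om^{\q}x'=\Om^{\q}xx'$. The main obstacle I anticipate is not conceptual but the careful sign-and-weight tracking when pushing the antipode through the first tensor leg: one must correctly use that $S$ is an anti-homomorphism (so $S(v_k^\be)$ interacts with the extra $S(e_i)=-\om_i^{-1}e_i$ and $S(\om_i^{\pm1})=\om_i^{\mp1}$ factors in the right order), and one must confirm that the sums over $\be$ telescope so that the $\Theta_{\be-\al_i}$ contributions line up against the $\Theta_\be$ contributions after reindexing. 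A secondary point to keep straight is the well-definedness of $\Om^{\q}$ (already asserted in the excerpt), which legitimizes the term-by-term manipulation of the infinite sum in the category $\O$ completion where these operators act.
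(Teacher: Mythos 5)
Your proposal is correct and is precisely the verification the paper leaves implicit (its proof reads only ``It is straightforward to check,'' with Lemma \ref{rm} stated immediately beforehand for exactly this purpose): check the identity on generators by applying $m\circ(S\ot\id)$ to the $\Theta$-relations of Lemma \ref{rm}, then extend multiplicatively as you describe. One correction to your anticipated bookkeeping, which your promised careful tracking would surface anyway: in the $e_i$ case the image of the left-hand side of Lemma \ref{rm}(ii) under $m\circ(S\ot\id)$ telescopes to \emph{zero}, since $S(e_iv_k^{\be})=S(v_k^{\be})(-\om_i^{-1}e_i)$ cancels against the $(\om_i\ot e_i)\Theta_{\be-\al_i}$ contribution after reindexing $\be\mapsto\be-\al_i$, while the right-hand side yields $-\om_i^{-1}e_i\,{\Om}^{\q}+\om_i'^{-1}\,{\Om}^{\q}e_i=0$, i.e. $\psi(e_i){\Om}^{\q}={\Om}^{\q}e_i$ upon left-multiplying by $\om_i'$ --- rather than the two sides separately assembling into ${\Om}^{\q}e_i$ and $\psi(e_i){\Om}^{\q}$ as you predicted (and analogously for $f_i$ via Lemma \ref{rm}(iii), using that each ${\Om}^{\q}_\be$ has degree $0$ and hence commutes with $\om_i,\om_i'$).
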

\begin{proof}
It is straightforward to check.
\end{proof}

\begin{coro} For any $V\in Ob(\O^{\q}_{int})$ and
$v\in V_\la$, we have
\begin{equation}
{\Om}^{\q}\,e_iv=q_{ii}^{-(\la+\al_i)(h_i)}e_i\,{\Om}^{\q} v,\quad
{\Om}^{\q}\,f_i v=q_{ii}^{\la(h_i)}f_i\,{\Om}^{\q} v.
\end{equation}
\end{coro}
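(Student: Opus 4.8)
The plan is to read the corollary off directly from the intertwining identity in Lemma \ref{key5}, once we record how $\Om^{\q}$ interacts with the weight grading. The key structural observation I would establish first is that $\Om^{\q}$ is homogeneous of degree $0$ for the $Q$-grading on $U_{\q}$, and hence preserves weight spaces: for $v\in V_\la$ one has $\Om^{\q}v\in V_\la$. Indeed, each summand of $\Om^{\q}_\be=\sum_k S(v_k^{\be})u_k^{\be}$ is a product of $S(v_k^{\be})$, of degree $-\be$ (the antipode preserves the $Q$-grading, since $S(e_i)=-\om_i^{-1}e_i$ and $S(f_i)=-f_i\om_i'^{-1}$ still have degrees $\al_i$ and $-\al_i$, and $S$ is an anti-homomorphism), with $u_k^{\be}$, of degree $\be$; so $\Om^{\q}_\be$ has degree $0$. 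A degree-$0$ homogeneous element $u$ satisfies $\om_\mu u\om_{-\mu}=u=\om_\mu' u\om_{-\mu}'$ because $q_{\mu 0}=1$, so it commutes with all $\om_\mu,\om_\mu'$ and therefore preserves the simultaneous eigenspaces $V_\la$. (On an object of $\O^{\q}_{int}$ only finitely many $\Om^{\q}_\be$ act nontrivially on a given vector, so $\Om^{\q}v$ is well defined.)

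Next I would specialise Lemma \ref{key5} to $x=e_i$ and $x=f_i$. Since $\psi(e_i)=\om_i'\om_i^{-1}e_i$ and $\psi(f_i)=f_i\om_i'^{-1}\om_i$, the identity $\psi(x)\Om^{\q}=\Om^{\q}x$ becomes, as operators on $V$,
$$
\Om^{\q}e_i=\om_i'\om_i^{-1}e_i\,\Om^{\q},\qquad \Om^{\q}f_i=f_i\,\om_i'^{-1}\om_i\,\Om^{\q}.
$$
It then remains to evaluate both sides on $v\in V_\la$ and compute the scalar by which the Cartan element $\om_i'\om_i^{-1}$ (respectively $\om_i'^{-1}\om_i$) acts on the relevant weight space, using the eigenvalue prescription $\om_i w=q_{\al_i\mu}w$, $\om_i'w=q_{\mu\al_i}^{-1}w$ for $w\in V_\mu$ from the definition of $\O^{\q}_{int}$.

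For the $f_i$-identity, $\Om^{\q}v\in V_\la$ by the first step, and on $V_\la$ the element $\om_i'^{-1}\om_i$ acts by $q_{\la\al_i}q_{\al_i\la}$, which equals $q_{ii}^{\la(h_i)}$ by Lemma \ref{key}; pulling this scalar past $f_i$ gives $\Om^{\q}f_iv=q_{ii}^{\la(h_i)}f_i\Om^{\q}v$. For the $e_i$-identity, $e_i\Om^{\q}v\in V_{\la+\al_i}$, and on $V_{\la+\al_i}$ the element $\om_i'\om_i^{-1}$ acts by the inverse $q_{\la+\al_i,\al_i}^{-1}q_{\al_i,\la+\al_i}^{-1}=q_{ii}^{-(\la+\al_i)(h_i)}$, again by Lemma \ref{key}; this yields $\Om^{\q}e_iv=q_{ii}^{-(\la+\al_i)(h_i)}e_i\Om^{\q}v$, as claimed.

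I expect no serious obstacle: the entire content is carried by Lemma \ref{key5}, and the only thing to watch is the bookkeeping. The mild subtlety worth stating cleanly is the degree-$0$, hence weight-preserving, property of $\Om^{\q}$, which is precisely what lets one read off a single scalar instead of a sum; after that, the one point where an error could creep in is identifying the correct weight on which $\om_i'\om_i^{-1}$ acts (the shifted weight $\la+\al_i$ in the $e_i$ case versus $\la$ in the $f_i$ case) and inverting the corresponding $q$-scalars properly.
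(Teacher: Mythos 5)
Your proposal is correct and follows essentially the same route as the paper: specialize Lemma \ref{key5} to $x=e_i$ and $x=f_i$, then evaluate the Cartan factors $\om_i'\om_i^{-1}$ and $\om_i'^{-1}\om_i$ on the weight spaces $V_{\la+\al_i}$ and $V_\la$ respectively, identifying the scalars via Lemma \ref{key}. The only difference is that you explicitly justify the weight-preserving (degree-$0$) property of $\Om^{\q}$, a point the paper's proof uses implicitly; this is a harmless and welcome addition, not a different argument.
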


\begin{proof} For any $v\in V_\lambda$ and $i\in I$, by Lemma \ref{key5},
\begin{align*}
&\psi(e_i)\,{\Om}^{\q}. v=\om_i'\om_i^{-1}e_i {\Om}^{\q}. v
=q_{\al_i,\la+\al_i}^{-1}q_{\la+\al_i,\al_i}^{-1}e_i{\Om}^{\q}. m=q_{ii}^{-(\la+\al_i)(h_i)}e_i{\Om}^{\q}. v,\\
&\psi(f_i)\,{\Om}^{\q}. v=f_i{\om_i'}^{-1}\om_i{\Om}^{\q}. v=q_{\al_i\la}q_{\la\al_i}f_i{\Om}^{\q}. v
=q_{ii}^{\la(h_i)}f_i{\Om}^{\q}. v.
\end{align*}
This completes the proof.
\end{proof}

Note that the following fact:
$$q_{ij}q_{ji}=q_{ii}^{a_{ij}}=q_{jj}^{a_{ji}}=q_{ji}q_{ij},\quad\forall\ i,j\in I,$$
and
$$d_ia_{ij}=d_ja_{ji},\quad\forall\  i,j\in I.$$
Then
\begin{equation}
q_{ii}^{\frac{1}{d_i}}=q_{jj}^{\frac{1}{d_j}},\quad \forall\ i,
\;j\in I.
\end{equation}
Let $t=q_{ii}^{\frac{1}{d_i}},\ \forall\ i\in I$.
For $V^{\q}\in Ob(\O^{\q}_{int})$, we can define
$${\Xi}^{\q}:\, V^{\q}\lra V^{\q}$$
such that
\begin{equation}
{\Xi}^{\q}\,v_\mu=g(\mu)v_\mu, \quad
\text{for } \ v_\mu\in V_\mu^{\q},\ i\in I,
\end{equation}
where $g(\mu)=t^{\frac{(\mu+\rho,\mu+\rho)}{2}}$.

\begin{pro}\label{commute} For $V\in Ob(\O^{\q}_{int})$,
then the action of ${\Om}^{\q}\cdot{\Xi}^{\q}:\,V^{\q}\lra
V^{\q}$ commutes with the action of $U_{\q}$ on $V$.
\end{pro}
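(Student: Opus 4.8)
The plan is to verify that the operator $\Om^{\q}\Xi^{\q}$ commutes with each of the algebra generators $e_i,f_i,\om_i^{\pm1},\om_i'^{\pm1}$ $(i\in I)$, which suffices since these generate $U_{\q}$. I first dispose of the Cartan part. Each summand $\Om^{\q}_\be=\sum_k S(v_k^\be)u_k^\be$ has total $Q$-degree $0$: indeed $u_k^\be\in(U_{\q}^+)_\be$ has degree $\be$, and since $S(f_i)=-f_i{\om_i'}^{-1}$ preserves the $Q$-grading, $S(v_k^\be)$ has degree $-\be$. Hence $\Om^{\q}_\be$ maps $V_\la$ into $V_\la$ (this also explains why $\Om^{\q}$ is well defined in category $\O^{\q}_{int}$: for $v\in V_\la$ one has $u_k^\be v\in V_{\la+\be}$, which vanishes for all but finitely many $\be$ because weights are bounded above). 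Since $\Xi^{\q}$ acts by the scalar $g(\mu)$ on $V_\mu$, both $\Om^{\q}$ and $\Xi^{\q}$ preserve weight spaces, so their composite does too; as $\om_i,\om_i'$ act by scalars on each $V_\la$, the operator $\Om^{\q}\Xi^{\q}$ automatically commutes with $\om_i^{\pm1},{\om_i'}^{\pm1}$.

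For $e_i$ and $f_i$ I would use the preceding corollary together with the fact that $\Xi^{\q}$ is scalar on weight spaces. Fix $v\in V_\la$. Using $\Om^{\q}e_iv=q_{ii}^{-(\la+\al_i)(h_i)}e_i\Om^{\q}v$ and $\Om^{\q}f_iv=q_{ii}^{\la(h_i)}f_i\Om^{\q}v$, and tracking the weight shifts $e_iv\in V_{\la+\al_i}$, $f_iv\in V_{\la-\al_i}$, one computes
$$
\Om^{\q}\Xi^{\q}e_iv=g(\la+\al_i)\,q_{ii}^{-(\la+\al_i)(h_i)}\,e_i\Om^{\q}v,\qquad
e_i\Om^{\q}\Xi^{\q}v=g(\la)\,e_i\Om^{\q}v,
$$
$$
\Om^{\q}\Xi^{\q}f_iv=g(\la-\al_i)\,q_{ii}^{\la(h_i)}\,f_i\Om^{\q}v,\qquad
f_i\Om^{\q}\Xi^{\q}v=g(\la)\,f_i\Om^{\q}v.
$$
Thus commutation with $e_i$ (resp. $f_i$) reduces exactly to the scalar identities
$$
\frac{g(\la+\al_i)}{g(\la)}=q_{ii}^{(\la+\al_i)(h_i)},\qquad
\frac{g(\la)}{g(\la-\al_i)}=q_{ii}^{\la(h_i)}.
$$

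The remaining work, and the only genuinely computational point, is to confirm these two scalar identities from the explicit formula $g(\mu)=t^{(\mu+\rho,\mu+\rho)/2}$ with $t=q_{ii}^{1/d_i}$. Expanding the quadratic form gives
$$
\frac{g(\la+\al_i)}{g(\la)}=t^{\,(\la+\rho,\al_i)+\frac12(\al_i,\al_i)},\qquad
\frac{g(\la)}{g(\la-\al_i)}=t^{\,(\la+\rho,\al_i)-\frac12(\al_i,\al_i)}.
$$
Using the standard normalization $(\al_i,\al_j)=d_ia_{ij}$, so that $(\al_i,\al_i)=2d_i$, $(\rho,\al_i)=d_i$, and $(\la,\al_i)=d_i\la(h_i)$, the first exponent becomes $d_i(\la(h_i)+2)$ and the second $d_i\la(h_i)$; since $t^{d_i}=q_{ii}$ and $(\la+\al_i)(h_i)=\la(h_i)+2$, both identities follow. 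The main obstacle is precisely this bookkeeping of exponents, which crucially relies on the fact that $g$ is built from the single scalar $t=q_{ii}^{1/d_i}$, a choice legitimized by the relation $q_{ii}^{1/d_i}=q_{jj}^{1/d_j}$ established just before the definition of $\Xi^{\q}$. Once the two identities hold, the four displayed expressions collapse pairwise, proving $[\Om^{\q}\Xi^{\q},e_i]=[\Om^{\q}\Xi^{\q},f_i]=0$, and combined with the Cartan case this establishes that $\Om^{\q}\Xi^{\q}$ commutes with the $U_{\q}$-action on $V$.
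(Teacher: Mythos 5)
Your proof is correct and follows essentially the same route as the paper: checking commutation on the generators, invoking the corollary ${\Om}^{\q}e_iv=q_{ii}^{-(\la+\al_i)(h_i)}e_i{\Om}^{\q}v$, ${\Om}^{\q}f_iv=q_{ii}^{\la(h_i)}f_i{\Om}^{\q}v$, and reducing to the exponent bookkeeping for $g(\mu)=t^{(\mu+\rho,\mu+\rho)/2}$ with $t^{d_i}=q_{ii}$, which the paper carries out inline while you cleanly isolate it as two scalar identities. Your explicit treatment of the Cartan generators and of the well-definedness of ${\Om}^{\q}$ only makes explicit what the paper leaves implicit.
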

\begin{proof} It suffices to check the result on generators. Then for $v\in V_{\mu}^{\q}$ and
$i\in I$, we have
\begin{eqnarray*}
{\Om}^{\q}\cdot{\Xi}^{\q} (e_i.v)&=&g(\mu+\al_i){\Om}^{\q} e_i. v\\
&=&g(\mu+\al_i)q_{ii}^{-(\mu+\al_i)(h_i)}e_i{\Om}^{\q}.  v\\
&=&g(\mu+\al_i)g(\mu)^{-1}q_{ii}^{-(\mu+\al_i)(h_i)}e_i{\Om}^{\q}{\Xi}^{\q}.v\\
&=&t^{\frac{(\mu+\al_i+\rho,\mu+\al_i+\rho)}{2}}t^{-\frac{(\mu+\rho,\mu+\rho)}{2}}q_{ii}^{-\frac{(\mu+\al_i,\al_i)}{d_i}}e_i{\Om}^{\q}{\Xi}^{\q}.v\\
&=&t^{(\mu+\rho,\al_i)+d_i}t^{-(\mu+\al_i,\al_i)}e_i{\Om}^{\q}\cdot{\Xi}^{\q}.v\\
&=&t^{(\mu,\al_i)+2d_i}t^{-(\mu,\al_i)-2d_i}e_i{\Om}^{\q}\cdot{\Xi}^{\q}.v\\
&=&e_i{\Om}^{\q}\cdot{\Xi}^{\q}.v.
\end{eqnarray*}
Moreover,
\begin{eqnarray*}
{\Om}^{\q}\cdot{\Xi}^{\q} (f_i.v)&=&g(\mu-\al_i){\Om}^{\q} f_i.v\\
&=&g(\mu-\al_i)q_{ii}^{\mu(h_i)}f_i{\Om}^{\q}.v\\
&=&g(\mu-\al_i)g(\mu)^{-1}q_{ii}^{\mu(h_i)}f_i{\Om}^{\q}\cdot{\Xi}^{\q}.v\\
&=&t^{\frac{(\mu-\al_i+\rho,\mu-\al_i+\rho)}{2}}t^{-\frac{(\mu+\rho,\mu+\rho)}{2}}
q_{ii}^{\frac{(\mu,\al_i)}{d_i}}f_i{\Om}^{\q}\cdot{\Xi}^{\q}.v\\
&=&t^{-(\mu,\al_i)}t^{(\mu,\al_i)}f_i{\Om}^{\q}\cdot{\Xi}^{\q}.v\\
&=&f_i{\Om}^{\q}\cdot{\Xi}^{\q}.v.
\end{eqnarray*}

We complete the proof.
\end{proof}

\begin{lem}\label{key1}
Let $\la,\; \mu\in \La^+$. If $\la\geq\mu$ and $g(\la)=g(\mu)$, then
$\la=\mu.$
\end{lem}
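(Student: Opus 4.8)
The plan is to convert the multiplicative identity $g(\la)=g(\mu)$ into an additive one on the invariant form, and then close the argument with a strict-dominance (rather than a positive-definiteness) estimate. First I would record the exact shape of $g$: by definition $g(\la)=t^{(\la+\rho,\la+\rho)/2}$ and $g(\mu)=t^{(\mu+\rho,\mu+\rho)/2}$, where $t=q_{ii}^{1/d_i}$ is independent of $i$ and, because $q_{ii}$ is not a root of unity, neither is $t$. Hence $g(\la)=g(\mu)$ reads $t^{c}=1$ with $c=\tfrac12\bigl[(\la+\rho,\la+\rho)-(\mu+\rho,\mu+\rho)\bigr]$. I would then check that $c\in\tfrac12\bZ$: writing $\nu:=\la-\mu$, which lies in $Q^+$ because $\la\ge\mu$, the difference of the two norms equals $(\nu,\,\la+\mu+2\rho)$, and on each simple root $(\al_i,\la+\mu+2\rho)=d_i(\la+\mu+2\rho)(h_i)\in\bZ$. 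Since $t^{2c}=1$ with $2c\in\bZ$ and $t$ is not a root of unity, this forces $2c=0$, i.e. $(\la+\rho,\la+\rho)=(\mu+\rho,\mu+\rho)$.

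The second step is the algebraic identity that makes the problem tractable. Putting $\xi=\la+\rho$, $\eta=\mu+\rho$, so that $\xi-\eta=\nu\in Q^+$, I would factor
\[
(\xi,\xi)-(\eta,\eta)=(\xi-\eta,\,\xi+\eta)=(\nu,\,\la+\mu+2\rho)=0.
\]
The merit of this rewriting is that it never uses positive-definiteness of the bilinear form (which fails in the general symmetrizable Kac--Moody setting); instead it isolates the pairing of an element $\nu\in Q^+$ against the explicit weight $\la+\mu+2\rho$.

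Finally I would exploit that $\la+\mu+2\rho$ is strictly dominant: for each $i\in I$ one has $(\la+\mu+2\rho)(h_i)=\la(h_i)+\mu(h_i)+2\ge 2>0$, using $\la,\mu\in\La^+$ and $\rho(h_i)=1$. Writing $\nu=\sum_{i}n_i\al_i$ with all $n_i\ge 0$, each term satisfies $(\al_i,\la+\mu+2\rho)=d_i(\la+\mu+2\rho)(h_i)\ge 2d_i>0$, so $(\nu,\la+\mu+2\rho)=\sum_i n_i(\al_i,\la+\mu+2\rho)\ge 0$ with equality precisely when every $n_i=0$. Combined with the vanishing above, this yields $\nu=0$, hence $\la=\mu$. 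The only place demanding genuine care is the first step: one must verify that the exponent $c$ is rational (indeed a half-integer) before invoking the ``not a root of unity'' hypothesis, since $t^c=1$ only forces $c=0$ once we know $c$ lies in $\tfrac1N\bZ$ for an explicit $N$; the coprimality and integrality of the $d_i$ and of the values $\la(h_i),\mu(h_i)$ is exactly what supplies this.
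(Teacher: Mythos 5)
Your proof is correct, and while it follows the same overall strategy as the paper (turn $g(\la)=g(\mu)$ into the vanishing of an inner product, then use dominance), your execution differs in two substantive ways that are worth noting. First, your expansion of the exponent is the accurate one: setting $\la=\mu+\be$ with $\be\in Q^+$, the difference of exponents is $\tfrac12\bigl[(\la+\rho,\la+\rho)-(\mu+\rho,\mu+\rho)\bigr]=\tfrac12(\be,\la+\mu+2\rho)=(\mu+\rho,\be)+\tfrac12(\be,\be)$, whereas the paper's proof displays this quantity as $(\mu+\be,\be)$, which drops the $\rho$-contribution (an apparent slip, since $(\rho,\be)=\tfrac12(\be,\be)$ fails for general $\be\in Q^+$). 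Second, and more importantly, your closing step is more robust: the paper concludes from $(\mu+\be,\be)=0$ and $\mu\in\La^+$ that $\be=0$, which implicitly leans on positivity of $(\be,\be)$ and so is only immediate in finite type, while the lemma lives in the general symmetrizable Kac--Moody setting of the paper (note $(\de,\de)=0$ in affine type). Your pairing of $\be=\sum_i n_i\al_i$ against the strictly dominant weight $\la+\mu+2\rho$, with $(\al_i,\la+\mu+2\rho)=d_i\bigl(\la(h_i)+\mu(h_i)+2\bigr)\ge 2d_i>0$, avoids positive-definiteness entirely and closes the argument in all types. Your preliminary verification that the exponent lies in $\tfrac12\bZ$ before invoking that $t$ is not a root of unity is also a point of rigor the paper passes over silently; the paper's version buys brevity, yours buys correctness at full generality.
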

\begin{proof}
Since $\la\geq\mu$, we can assume that $\la=\mu+\be$ for some $\be\in Q^+$.
Then
\begin{eqnarray*}
g(\la)&=&g(\mu),\\
t^{\frac{(\la+\rho,\la+\rho)}{2}}&=&t^{\frac{(\mu+\rho,\mu+\rho)}{2}},\\
t^{\frac{(\mu+\be+\rho,\mu+\be+\rho)}{2}}&=&t^{\frac{(\mu+\rho,\mu+\rho)}{2}},\\
t^{(\mu+\be,\be)}&=&1.
\end{eqnarray*}
Hence,
$$
(\mu+\be,\be)=0.
$$
Because of $\mu\in \La^+$, $\be=0$.
\end{proof}

\begin{lem} Let $V^{\q}(\la)\in Ob(\O^{\q}_{int})$. Then the action of
${\Om}^{\q}\cdot{\Xi}^{\q}$ is the scalar
$$g(\la)=t^{\frac{(\la+\rho,\la+\rho)}{2}}.$$
\end{lem}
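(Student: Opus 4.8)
The plan is to combine Proposition \ref{commute} with the cyclicity of the highest weight module, so that it suffices to evaluate ${\Om}^{\q}\cdot{\Xi}^{\q}$ on the single vector $v_\la$. Recall that $V^{\q}(\la)$ is generated as a $U_{\q}(\mg_A)$-module by its highest weight vector $v_\la$, i.e. every vector has the form $u.v_\la$ for some $u\in U_{\q}(\mg_A)$. By Proposition \ref{commute} the operator ${\Om}^{\q}\cdot{\Xi}^{\q}$ commutes with the action of $U_{\q}(\mg_A)$, so once we know $({\Om}^{\q}\cdot{\Xi}^{\q})\,v_\la=g(\la)\,v_\la$ we obtain, for every $u$,
\begin{equation*}
({\Om}^{\q}\cdot{\Xi}^{\q})(u.v_\la)=u.\bigl(({\Om}^{\q}\cdot{\Xi}^{\q})\,v_\la\bigr)=g(\la)\,(u.v_\la),
\end{equation*}
whence ${\Om}^{\q}\cdot{\Xi}^{\q}=g(\la)\,\id$ on all of $V^{\q}(\la)$. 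This route needs neither Schur's lemma nor algebraic closedness of $\bK$.

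It remains to evaluate the two factors on $v_\la$. Since $v_\la\in V^{\q}_\la$, the defining formula for ${\Xi}^{\q}$ gives ${\Xi}^{\q}\,v_\la=g(\la)\,v_\la$ at once. For ${\Om}^{\q}$ I would use the expansion ${\Om}^{\q}=\sum_{\be\in Q^+}\sum_k S(v_k^{\be})\,u_k^{\be}$. Each $u_k^{\be}$ lies in $(U_{\q}^+)_{\be}$, so it raises weights by $\be$; for $\be\in Q^+\setminus\{0\}$ we have $u_k^{\be}.v_\la\in V^{\q}_{\la+\be}=0$ because $\la+\be>\la$ is not a weight of $V^{\q}(\la)$. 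Hence only the term $\be=0$ contributes, and there $(U_{\q}^+)_0=\bK\cdot1$ with dual bases normalized by $\lan1,1\ran_{\q}=1$, so ${\Om}^{\q}_0=S(1)\cdot1=1$. Therefore ${\Om}^{\q}\,v_\la=v_\la$, and composing,
\begin{equation*}
({\Om}^{\q}\cdot{\Xi}^{\q})\,v_\la={\Om}^{\q}\bigl(g(\la)\,v_\la\bigr)=g(\la)\,v_\la.
\end{equation*}

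The content here is entirely structural, so I do not expect a real obstacle; the one point deserving care is the claim that ${\Om}^{\q}$ acts as the identity on $v_\la$. This rests on two facts: that the degree-zero part of $\Theta$ is exactly $1\otimes1$ (equivalently that the chosen dual bases reduce to $1$ in degree $0$), and that every positive-degree contribution is killed by $v_\la$. Both are immediate from the grading $U_{\q}^+=\bigoplus_{\be\in Q^+}(U_{\q}^+)_{\be}$ together with $e_i.v_\la=0$, so no estimate or limiting argument is required, and the result follows from the already-established commutation in Proposition \ref{commute}.
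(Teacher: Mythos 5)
Your proof is correct and takes essentially the same route as the paper: evaluate ${\Om}^{\q}\cdot{\Xi}^{\q}$ on the highest weight vector $v_\la$ and then propagate the scalar $g(\la)$ to all of $V^{\q}(\la)$ via Proposition \ref{commute} and cyclicity. Your explicit verification that ${\Om}^{\q}$ fixes $v_\la$ (the degree-zero term of $\Theta$ is $1\ot 1$ with $\lan 1,1\ran_{\q}=1$, while each $u_k^\be$ with $\be\in Q^+\setminus\{0\}$ kills $v_\la$) is precisely the detail the paper's one-line assertion ${\Om}^{\q}\cdot{\Xi}^{\q}.v_{\la}=g(\la)v_{\la}$ leaves implicit.
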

\begin{proof}
Let $v_{\la}$ be the highest weight vector of $V^{\q}(\la)$. Then
$$
{\Om}^{\q}\cdot{\Xi}^{\q}.v_{\la}=g(\la)v_{\la}.
$$
By Lemma \ref{commute},  we have
${\Om}^{\q}\cdot{\Xi}^{\q}.v=g(\la)v,\quad\forall\  v\in
V^{\q}(\la)$.
\end{proof}

By all above lemmas, similar to Lusztig \cite{Lu} for the
one-parameter ones, we have

\begin{theo} Let $V^{\q}\in Ob(\O^{\q}_{int})$.
Then $V^{\q}$ is completely reducible. \hfill\qed
\end{theo}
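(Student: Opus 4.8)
The plan is to deploy the quantum Casimir operator $c:=\Om^{\q}\cdot\Xi^{\q}$ exactly as Lusztig does in the one-parameter setting. The three inputs are already in place: by Proposition \ref{commute}, $c$ commutes with the $U_{\q}$-action on every module in $\O^{\q}_{int}$; by the preceding lemma it acts on each irreducible $V^{\q}(\la)$ by the scalar $g(\la)=t^{\frac{(\la+\rho,\la+\rho)}{2}}$; and by Proposition \ref{dom} the irreducibles in the category are precisely the $V^{\q}(\la)$ with $\la\in\La^+$. The first observation I would record is that $c$ preserves each weight space $V_\mu$, since $\Om^{\q}=\sum_{\be,k}S(v_k^{\be})u_k^{\be}$ is a sum of weight-zero operators and $\Xi^{\q}$ is scalar on $V_\mu$. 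As $\dim V_\mu<\infty$, the operator $c$ is locally finite, and because it is a module endomorphism it produces a $U_{\q}$-submodule decomposition $V=\bigoplus_a V[a]$ into generalized eigenspaces. Any composition factor $V^{\q}(\la)$ of $V[a]$ must have $g(\la)=a$, so it suffices to prove complete reducibility of a single $V[a]$, i.e. in the case where all composition factors carry the common Casimir value $a$.

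The decisive structural fact is then supplied by Lemma \ref{key1}: if $\la\ge\mu$ are dominant with $g(\la)=g(\mu)=a$, then $\la=\mu$. Hence the dominant highest weights occurring among the composition factors of $V[a]$ are pairwise incomparable. Using condition $(2)$ in the definition of $\O^{\q}_{int}$, the weights of $V[a]$ are bounded above, so a maximal weight $\la$ exists; any nonzero $v\in V[a]_\la$ is then annihilated by every $e_i$, whence $\Om^{\q}v=v$ and $\Xi^{\q}v=g(\la)v$, so $cv=g(\la)v=av$. Thus $v$ is an \emph{honest} eigenvector of $c$, not merely a generalized one. The cyclic submodule it generates is a highest weight module all of whose composition factors have dominant highest weight $\le\la$ and Casimir value $a$, hence $=\la$ by Lemma \ref{key1}; since its $\la$-weight space is one-dimensional it is forced to be irreducible, $\cong V^{\q}(\la)$. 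Letting $v$ range over a basis of $V[a]_\la$ yields a semisimple submodule $S_\la\cong V^{\q}(\la)^{\oplus\dim V[a]_\la}$ sitting in the socle.

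To finish I would show that the socle exhausts $V[a]$, so that $V[a]$ is a sum of irreducible submodules and therefore semisimple. Suppose not; then $\overline V:=V[a]/\mathrm{soc}(V[a])$ is a nonzero object of $\O^{\q}_{int}$ with a maximal weight $\nu$ and a highest weight vector $\overline w$ generating an irreducible $V^{\q}(\nu)\subseteq\overline V$, again by the incomparability argument of the previous paragraph. Lifting $\overline w$ to $w\in V[a]_\nu$, one has $e_iw\in\mathrm{soc}(V[a])$ for all $i$, while $c$ acts on the socle by the honest scalar $a$. The point is that $(c-a)$ is locally nilpotent but kills every maximal-weight vector, and this, together with the incomparability forced by Lemma \ref{key1}, lets one correct $w$ by an element of the socle to an honest $c$-eigenvector of weight $\nu$; such a vector generates a copy of $V^{\q}(\nu)$ inside $V[a]$ that maps isomorphically onto $V^{\q}(\nu)\subseteq\overline V$, contradicting $w\notin\mathrm{soc}(V[a])$. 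Finally, complete reducibility of each $V[a]$ gives it for $V=\bigoplus_a V[a]$, and since complete reducibility passes to submodules and arbitrary sums, the reduction from an arbitrary module to its finitely generated (indeed cyclic) submodules is harmless.

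The step I expect to be the main obstacle is precisely this last one: proving that $\mathrm{soc}(V[a])=V[a]$, equivalently that there is no nonsplit extension between the simple modules occurring in $V[a]$. This is where one genuinely needs the \emph{honest} eigenvector observation rather than just the generalized eigenspace decomposition, and where Lemma \ref{key1} is used in its sharpest form to rule out a factor $V^{\q}(\mu)$ with $\mu$ strictly below a maximal weight but sharing the same Casimir value $a$. Phrasing the conclusion as the vanishing of $\ext^1$ in $\O^{\q}_{int}$ between the relevant simples (via the correction-of-$w$ argument above) is the technical core; everything else — the eigenspace decomposition, the existence of maximal weights, and the identification of cyclic highest weight submodules with $V^{\q}(\la)$ — is bookkeeping of the kind already carried out in Corollary \ref{em} and Proposition \ref{non}.
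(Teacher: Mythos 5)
Your overall strategy coincides with the paper's: the paper gives no written argument at all for this theorem, saying only ``By all above lemmas, similar to Lusztig \cite{Lu} for the one-parameter ones'', and the intended proof is exactly the Casimir-block argument you outline --- the generalized eigenspace decomposition of $V^{\q}$ under $c=\Om^{\q}\cdot\Xi^{\q}$ (legitimate because $c$ commutes with the action by Proposition \ref{commute}, preserves the finite-dimensional weight spaces, and has eigenvalues of the form $g(\la)\in\bK$), the identification of maximal-weight vectors as honest highest weight vectors generating copies of $V^{\q}(\la)$ via Lemma \ref{key1} and Proposition \ref{dom}, and the socle-exhaustion step. So in substance you have reconstructed the proof the authors are pointing to.

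One mechanism in your final step is misstated, although the ingredients you yourself assembled repair it immediately. You propose to ``correct $w$ by an element of the socle to an honest $c$-eigenvector''. This cannot work as described: $(c-a)w$ lies in $\mathrm{soc}(V[a])_{\nu}$, but $(c-a)$ annihilates the socle, so replacing $w$ by $w-u$ with $u\in\mathrm{soc}(V[a])$ leaves $(c-a)(w-u)=(c-a)w$ unchanged; no socle correction can produce an eigenvector unless $w$ already is one. Nor would an honest eigenvector suffice on its own: a nonsplit self-extension of $V^{\q}(\la)$ also carries the honest scalar $g(\la)$, since $c$ is a module endomorphism. The operative fact is the one your incomparability observation supplies directly. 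Since $\nu$ is maximal among the weights of $\ol{V}$, one has $e_iw\in\mathrm{soc}(V[a])_{\nu+\al_i}$; and if $\nu+\al_i\le\la$ for some socle constituent $V^{\q}(\la)$, then $\nu<\la$ with $\nu,\la\in\La^+$ and $g(\nu)=g(\la)=a$, contradicting Lemma \ref{key1}. Hence $\mathrm{soc}(V[a])_{\nu+\al_i}=0$ and $e_iw=0$ outright, with no correction needed. Then $U_{\q}w$ is a highest weight module all of whose composition factors equal $V^{\q}(\nu)$ (again by Lemma \ref{key1}), and the one-dimensionality of its $\nu$-weight space forces it to be irreducible; it therefore lies in the socle, contradicting $\ol{w}\neq 0$. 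With this substitution your argument is complete and is, modulo exposition, precisely the Lusztig-style proof the paper cites.
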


\subsection{$R$-matrix}

Let $M,\ M'\in Ob(\O^{\q}_{int})$. The map
$$p_{M,M'}:\,M\otimes M'\lra
M\otimes M'
$$
is defined by
\begin{equation}
p_{M,M'}(m\otimes m')=q_{\mu\nu}^{-1}\,(m\otimes m'),\ \ \forall\;
m\in M_\mu,\; m'\in M_{\nu}'.
\end{equation}
We can take a basis $\{u_k^\be\}_{k=1}^{d_\be}$, ($d_\be =\dim
(U_{\q}^+)_{\be}$) of $(U_{\q}^+)_\be$, and the dual basis
$\{v_k^\be\}_{k=1}^{d_\be}$ of $(U_{\q}^-)_{-\be}$. Then, for any
$x\in (U_{\q}^+)_{\be}$ and $y\in (U_{\q}^-)_{-\be}$,
\begin{equation}
x=\sum_{k=1}^{d_\be}\lan v_k^\be, x\ran_{\q}\,u_k^\be, \qquad
y=\sum_{k=1}^{d_\be}\lan y, u_k^\be\ran_{\q}\,v_k^\be.
\end{equation}

\begin{lem}
Let $x\in (U_{\q}^+)_{\be},y\in (U_{\q}^-)_{-\be}(\be\in Q^+)$. Then
\begin{align}
\De(x)&=\sum_{0\leq\ga\leq\be}\sum_{i,j}\lan
v_i^{\be-\ga}v_j^{\ga},x \ran_{\q}
u_i^{\be-\ga}\om_{\ga}\ot u_j^{\ga},\\
\De(y)&=\sum_{0\leq\ga\leq\be}\sum_{i,j}\lan
y,u_i^{\be-\ga}u_j^{\ga}\ran_{\q} v_j^{\ga} \ot
v_i^{\be-\ga}\om'_{\ga}.
\end{align}
\end{lem}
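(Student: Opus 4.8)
The plan is to determine the coefficients of $\De(x)$ and $\De(y)$ against the dual bases by exploiting the Hopf pairing compatibilities of Theorem \ref{skew1} together with the non-degeneracy of Proposition \ref{non}. First I would recall the triangular grading shape of the coproduct already recorded above, namely $\De(x)\in\sum_{0\le\ga\le\be}(U_{\q}^+)_{\be-\ga}\om_\ga\ot(U_{\q}^+)_\ga$ for $x\in(U_{\q}^+)_\be$; expanding each homogeneous slot in the basis $\{u_k^{\be-\ga}\}_k$ (resp.\ $\{u_l^\ga\}_l$) lets me write $\De(x)=\sum_{0\le\ga\le\be}\sum_{k,l}c_{kl}^\ga\,u_k^{\be-\ga}\om_\ga\ot u_l^\ga$ for unknown scalars $c_{kl}^\ga\in\bK$. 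The whole task then reduces to identifying $c_{ij}^\ga=\lan v_i^{\be-\ga}v_j^\ga,\,x\ran_{\q}$.

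To extract $c_{ij}^\ga$ I would pair against $v_i^{\be-\ga}\ot v_j^\ga$. Applying the compatibility $\lan yy',\,x\ran_{\q}=\lan y\ot y',\,\De(x)\ran_{\q}$ of Theorem \ref{skew1} with $y=v_i^{\be-\ga}$, $y'=v_j^\ga$, and using that the pairing on tensor products factorizes, one obtains
$$\lan v_i^{\be-\ga}v_j^\ga,\,x\ran_{\q}=\sum_{0\le\ga'\le\be}\sum_{k,l}c_{kl}^{\ga'}\,\lan v_i^{\be-\ga},\,u_k^{\be-\ga'}\om_{\ga'}\ran_{\q}\,\lan v_j^\ga,\,u_l^{\ga'}\ran_{\q}.$$
Here the $\om_{\ga'}$ factor is harmless: since $v_i^{\be-\ga}$ carries no $\om'$ part, the rule $\lan y\om_\mu',\,x\om_\nu\ran_{\q}=q_{\nu\mu}\lan y,\,x\ran_{\q}$ specialized at $\mu=0$ (so that $q_{\nu 0}=1$) gives $\lan v_i^{\be-\ga},\,u_k^{\be-\ga'}\om_{\ga'}\ran_{\q}=\lan v_i^{\be-\ga},\,u_k^{\be-\ga'}\ran_{\q}$. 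The grading orthogonality of the pairing then forces $\ga'=\ga$ in both factors, and biorthogonality of the bases, $\lan v_i^{\be-\ga},\,u_k^{\be-\ga}\ran_{\q}=\de_{ik}$ and $\lan v_j^\ga,\,u_l^\ga\ran_{\q}=\de_{jl}$, collapses the double sum to $c_{ij}^\ga$, proving the first formula.

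For $\De(y)$ I would argue symmetrically. The dual grading shape $\De(y)\in\sum_{0\le\ga\le\be}(U_{\q}^-)_{-\ga}\ot(U_{\q}^-)_{-(\be-\ga)}\om'_\ga$, read off from $\De(f_i)=1\ot f_i+f_i\ot\om_i'$, lets me write $\De(y)=\sum_{0\le\ga\le\be}\sum_{k,l}d_{kl}^\ga\,v_l^\ga\ot v_k^{\be-\ga}\om'_\ga$, and I would pin down $d_{kl}^\ga$ by invoking the other compatibility $\lan y,\,xx'\ran_{\q}=\lan\De(y),\,x'\ot x\ran_{\q}$ (note the transposition in the second factor) with $x=u_k^{\be-\ga}$ and $x'=u_l^\ga$. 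Exactly as before, the $\om'_{\ga'}$ factor contributes trivially since the complementary weight is zero, grading orthogonality selects $\ga'=\ga$, and biorthogonality yields $d_{kl}^\ga=\lan y,\,u_k^{\be-\ga}u_l^\ga\ran_{\q}$, which is the second formula.

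I expect the genuinely delicate points to be purely bookkeeping ones: correctly establishing (or citing) the triangular grading shape of both coproducts together with the precise placement of the group-like factors $\om_\ga$ and $\om'_\ga$, keeping track of the index transposition built into the second pairing identity, and checking that these group-like factors drop out of the pairing because the complementary weight vanishes. Everything else is a formal consequence of the algebra/coalgebra compatibility of $\lan\,,\,\ran_{\q}$ and the non-degeneracy established in Proposition \ref{non}.
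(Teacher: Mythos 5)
Your argument is correct. The paper in fact states this lemma without printing a proof, and your derivation is precisely the computation it leaves implicit: you use only ingredients already set up in the text — the graded shape $\De(x)\in\sum_{0\le\ga\le\be}(U^+_{\q})_{\be-\ga}\om_\ga\ot(U^+_{\q})_{\ga}$ and its analogue for $U^-_{\q}$, the two compatibilities of $\lan\,,\,\ran_{\q}$ from Theorem \ref{skew1} (with the transposition in $\lan y,\,xx'\ran_{\q}=\lan\De(y),\,x'\ot x\ran_{\q}$ handled correctly), the rule $\lan y\om'_\mu,\,x\om_\nu\ran_{\q}=q_{\nu\mu}\lan y,\,x\ran_{\q}$ to discard the group-like factors at complementary weight zero, the grading orthogonality forcing $\ga'=\ga$, and the biorthogonality of the dual bases furnished by Proposition \ref{non} — and all index placements check out against the stated formulas.
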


Denote $\Theta_{\be}=\Theta_{\be}^{-}\ot \Theta_{\be}^{+}$. By a
direct computation, we have the following lemma
\begin{lem}\label{key3} For any $\eta\in Q^+$,
\begin{eqnarray*}
(\De\ot1)\Theta_{\eta}&=&\sum_{0\leq\ga\leq\eta}
(\Theta_{\eta-\ga})_{23}(1\ot \om_{\ga}'\ot 1)(\Theta_{\ga})_{13}\\
&=&\sum_{\be+\ga=\eta}
\Theta_{\be}^{-}\ot\Theta_{\ga}^{-}\om_{\be}'\ot\Theta_{\ga}^{+}\Theta_{\be}^{+},
\end{eqnarray*}
and
\begin{eqnarray*}
(1\ot \De)\Theta_{\eta}&=&\sum_{0\leq\ga\leq\eta}(\Theta_{\eta-\ga})_{12}(1\ot \om_{\ga}\ot 1)
(\Theta_{\ga})_{13}\\
&=&\sum_{\be+\ga=\eta}
\Theta_{\be}^{-}\Theta_{\ga}^{-}\ot\Theta_{\be}^{+}\om_{\ga}\ot\Theta_{\be}^{+}.
\end{eqnarray*}
\end{lem}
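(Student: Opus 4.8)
The plan is to prove the first identity by directly expanding $(\De\ot1)\Theta_\eta$ from the definition $\Theta_\eta=\sum_k v_k^\eta\ot u_k^\eta$, feeding in the coproduct formula for elements of $U_\q^-$ established just above, and then collapsing the resulting triple sum by a single application of the dual-basis reconstruction identity. The second identity follows by the mirror-image argument, and the two displayed forms (the leg notation and the explicit $\Theta^\pm$ notation) are merely two repackagings of the same double sum.

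Concretely, I first write $(\De\ot1)\Theta_\eta=\sum_k\De(v_k^\eta)\ot u_k^\eta$ and substitute the coproduct formula for $y=v_k^\eta\in(U_\q^-)_{-\eta}$, namely $\De(v_k^\eta)=\sum_{0\le\ga\le\eta}\sum_{i,j}\lan v_k^\eta,\,u_i^{\eta-\ga}u_j^\ga\ran_\q\,v_j^\ga\ot v_i^{\eta-\ga}\om'_\ga$. Interchanging the order of summation moves $\sum_k$ innermost, where it acts only on the scalar $\lan v_k^\eta,\,u_i^{\eta-\ga}u_j^\ga\ran_\q$ and on the last leg $u_k^\eta$.

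The key step is then to recognize that $\sum_k\lan v_k^\eta,\,z\ran_\q\,u_k^\eta=z$ for every $z\in(U_\q^+)_\eta$, which is exactly the reconstruction identity attached to the dual bases $\{u_k^\eta\}$, $\{v_k^\eta\}$. Since $u_i^{\eta-\ga}u_j^\ga$ lies in $(U_\q^+)_{\eta-\ga}(U_\q^+)_\ga\subseteq(U_\q^+)_\eta$, applying this with $z=u_i^{\eta-\ga}u_j^\ga$ collapses the $k$-sum and yields $(\De\ot1)\Theta_\eta=\sum_{0\le\ga\le\eta}\sum_{i,j}v_j^\ga\ot v_i^{\eta-\ga}\om'_\ga\ot u_i^{\eta-\ga}u_j^\ga$. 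Reading off the three tensor legs, this is precisely $\sum_{0\le\ga\le\eta}(\Theta_{\eta-\ga})_{23}(1\ot\om'_\ga\ot1)(\Theta_\ga)_{13}$; interchanging the dummy labels $\be\leftrightarrow\ga$ (with $\be=\eta-\ga$) and $i\leftrightarrow j$ and using $\Theta_\be=\Theta_\be^-\ot\Theta_\be^+$ rewrites it as $\sum_{\be+\ga=\eta}\Theta_\be^-\ot\Theta_\ga^-\om'_\be\ot\Theta_\ga^+\Theta_\be^+$, the claimed form.

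For the second identity I expand $(1\ot\De)\Theta_\eta=\sum_k v_k^\eta\ot\De(u_k^\eta)$, substitute the coproduct formula for $x=u_k^\eta\in(U_\q^+)_\eta$, and collapse the $k$-sum using the companion reconstruction identity $\sum_k\lan y,\,u_k^\eta\ran_\q\,v_k^\eta=y$ for $y\in(U_\q^-)_{-\eta}$, applied to $y=v_i^{\eta-\ga}v_j^\ga$. The computation is entirely parallel, the only changes being that the group-like factors now carry no prime and sit in the middle leg. I expect no genuine obstacle: the argument is graded bookkeeping together with one invocation of dual-basis reconstruction, and the only points demanding care are keeping the ordering of the two factors $u_i^{\eta-\ga}u_j^\ga$ (resp. $v_i^{\eta-\ga}v_j^\ga$) correct and tracking which group-like $\om'_\ga$ (resp. $\om_\ga$) accompanies which leg, both of which are dictated exactly by the cited coproduct formulas.
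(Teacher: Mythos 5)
Your proof is correct and is precisely the ``direct computation'' that the paper leaves implicit: expand $(\De\ot1)\Theta_\eta$ (resp.\ $(1\ot\De)\Theta_\eta$) via the coproduct lemma immediately preceding the statement, collapse the $k$-sum with the dual-basis reconstruction identities $x=\sum_k\lan v_k^\eta,x\ran_{\q}u_k^\eta$ and $y=\sum_k\lan y,u_k^\eta\ran_{\q}v_k^\eta$, and relabel. Note that your derivation yields $\Theta_{\ga}^{+}$ in the third tensor leg of the second identity, which confirms that the paper's displayed ``$\Theta_{\be}^{+}$'' there is a typo.
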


Let $M$ and $M' \in Ob(\O^{\q}_{int})$. Define
$$\Theta_{M,M'}^{\q}:\,M\ot M'\lra M\ot M',$$
and $\Theta_\be: \,M_\la\ot M_\mu'\lra M_{\la-\beta}\ot
M_{\mu+\beta}', \ \forall\ \la,\,\mu\in\La$. Note that
$\Theta_{M,M'}^{\q}$ is well-defined.

\begin{theo}\label{mat} Let $M$ and $M' \in Ob(\O^{\q}_{int})$. Then
\begin{equation}
R_{M,M'}^{\q}:=\Theta_{M,M'}^{\q}\circ p_{M',M}\circ P:\, M\otimes M'\lra M'\otimes M
\end{equation}
is an isomorphism of $U_{\q}$-modules, where $P:\,M\otimes M'\lra
M'\otimes M$ is the flip map such that
\begin{equation}
P(m\otimes m')=m'\otimes m,\quad\forall\; m\in M,\,m'\in M'.
\end{equation}
\end{theo}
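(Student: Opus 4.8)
The plan is to establish two things: that $R_{M,M'}^{\q}$ commutes with the $U_{\q}$-action (so that it is a homomorphism of $U_{\q}$-modules), and that it is bijective. Since the objects $M,M'$ of $\O^{\q}_{int}$ have weights bounded above, for each fixed $m'\ot m$ only finitely many $\Theta_\be$ contribute; thus $\Theta^{\q}_{M,M'}$ is a well-defined operator, and I may manipulate it termwise as $\sum_{\be\in Q^+}\Theta_\be$ (this is the well-definedness remark preceding the statement).

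For the homomorphism property I would first note that the flip converts $\De$ into the opposite coproduct, $P\circ\De(x)=\De^{\mathrm{op}}(x)\circ P$ on $M\ot M'$. Writing $R=\Theta^{\q}\circ p\circ P$ and using that $p$ is invertible, the desired identity $R\circ\De(x)=\De(x)\circ R$ becomes equivalent to
$$
\Theta^{\q}\circ\bigl(p\,\De^{\mathrm{op}}(x)\,p^{-1}\bigr)=\De(x)\circ\Theta^{\q}
$$
as operators on $M'\ot M$. Both sides are multiplicative in $x$ composed with fixed operators, so it suffices to verify this on the generators $\om_i^{\pm1},\om_i'^{\pm1},e_i,f_i$.

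Summing the recursions of Lemma \ref{rm} over $\be\in Q^+$ collapses the telescoping sums and yields the closed identities $\De(e_i)\,\Theta^{\q}=\Theta^{\q}\bigl((e_i\ot1)+(\om_i'\ot e_i)\bigr)$ and $\De(f_i)\,\Theta^{\q}=\Theta^{\q}\bigl((1\ot f_i)+(f_i\ot\om_i)\bigr)$, while Lemma \ref{rm}(i) gives $\De(\om_i^{\pm1})\Theta^{\q}=\Theta^{\q}\De(\om_i^{\pm1})$ and likewise for $\om_i'^{\pm1}$. It then remains to check that conjugation by $p$ sends $\De^{\mathrm{op}}$ of each generator to exactly the operator appearing on the right. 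This is the crux, and it is a direct scalar computation on a weight vector $m'\ot m\in M'_\mu\ot M_\nu$: using $p(m'\ot m)=q_{\mu\nu}^{-1}m'\ot m$, the multiplicativity $q_{\mu+\al_i,\nu}=q_{\mu\nu}q_{\al_i\nu}$ and $q_{\mu,\nu+\al_i}=q_{\mu\nu}q_{\mu\al_i}$, and the eigenvalue formulas $\om_i m=q_{\al_i\nu}m$, $\om_i'm'=q_{\mu\al_i}^{-1}m'$ from the definition of $\O^{\q}_{int}$, one finds $p(1\ot e_i)p^{-1}=\om_i'\ot e_i$ and $p(e_i\ot\om_i)p^{-1}=e_i\ot1$, so that $p\,\De^{\mathrm{op}}(e_i)\,p^{-1}=(e_i\ot1)+(\om_i'\ot e_i)$; the analogous bookkeeping gives $p\,\De^{\mathrm{op}}(f_i)\,p^{-1}=(1\ot f_i)+(f_i\ot\om_i)$, and the torus generators are immediate since $p$ commutes with diagonal operators. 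Matching these against the summed Lemma \ref{rm} identities proves the displayed intertwining on generators. This scalar matching, in which the definition of $p$, the weight shifts produced by $e_i,f_i$, and the relation $q_{ij}q_{ji}=q_{ii}^{a_{ij}}$ conspire to cancel, is the main technical point; everything else is formal.

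Finally, for bijectivity I would observe that $\Theta^{\q}=\id+\sum_{\be>0}\Theta_\be$ is unipotent with respect to the weight filtration of $M'\ot M$, since each $\Theta_\be$ with $\be>0$ strictly lowers the $M'$-weight and only finitely many act nontrivially on a given vector; hence $\Theta^{\q}$ is invertible on every object of $\O^{\q}_{int}$. As $p$ and $P$ are visibly invertible, $R_{M,M'}^{\q}$ is a bijective $U_{\q}$-module homomorphism, i.e.\ an isomorphism of $U_{\q}$-modules.
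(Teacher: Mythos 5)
Your proposal is correct and takes essentially the same route as the paper: it reduces the intertwining property to the generators and verifies it by combining the summed identities of Lemma \ref{rm} with the weight-scalar bookkeeping for $p$ --- your conjugation formulas $p\,\De^{\mathrm{op}}(e_i)\,p^{-1}=(e_i\otimes 1)+(\om_i'\otimes e_i)$ and $p\,\De^{\mathrm{op}}(f_i)\,p^{-1}=(1\otimes f_i)+(f_i\otimes \om_i)$ are exactly the scalar computations the paper performs directly on weight vectors (and checks explicitly only for $f_i$). The one place you go beyond the paper is bijectivity, which it dismisses as clear, whereas your unipotence argument for $\Theta^{\q}_{M,M'}=\id+N$, with $N$ nilpotent on each finite-dimensional weight space of $M'\otimes M$ (note the termination is cleanest via the second tensor factor, whose weights are raised by $N$ and bounded above), correctly supplies that missing detail.
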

\begin{proof}It is clear that $R_{M,M'}^{\q}$ is invertible. We shall show that
\begin{equation*}
\Delta(x)R_{M,M'}^{\q}(m\otimes m')=R_{M,M'}^{\q}\Delta(x)(m\otimes m')
\end{equation*}
for any $x\in U_{\q}$, $m\in M_\lambda$ and $m'\in M_\mu'$. In fact,
it suffices to check it for generators $e_i,\,f_i,\,\om_i,\,\om_i' \
(i\in I)$. Here we only check this for $f_i, i\in I$, similarly for
$e_i,\om_i,\,\om_i'$. By Lemma \ref{rm} (iii),
\begin{align*}
\De(f_i) & R_{M,M'}^{\q}(m\ot
m')=q_{\mu\la}^{-1}\De(f_i)\Theta(m'\ot
m)\\
=&\ q_{\mu\la}^{-1}(f_i\otimes
\om_i')\bigl(\sum_{\be\in Q^+}\Theta_{\be-\al_i})(m'\ot m)+q_{\mu,\la}^{-1}(1\ot
f_i)(\sum_{\be\in Q^+}\Theta_\be)(m'\otimes m)\\
=&\ q_{\mu\la}^{-1}(\sum_{\be\in Q^+}\Theta_{\beta-\al_i})(f_i\ot
\om_i)(m'\otimes m)+q_{\mu\la}^{-1}(\sum_{\be\in Q^+}\Theta_\be)(1\ot
f_i)(m'\otimes m)\\
=&\ q_{\mu\la}^{-1}q_{\al_i\la}(\sum_{\be\in
Q^+}\Theta_{\be-\al_i})(f_im'\otimes m)+q_{\mu\la}^{-1}(\sum_{\be\in
Q^+}\Theta_\be)(m'\otimes f_im).
\end{align*}
On the other hand,
\begin{eqnarray*}
R_{M,M'}^{\q}\Delta(f_i)(m\otimes m')&=&R_{M,M'}^{\q}(m\otimes
f_im'+f_im\otimes
\om_i'm')\\
&=&q_{\mu-\al_i\la}^{-1}\Theta(f_im'\otimes
m)+q_{\mu\la-\al_i}^{-1}\Theta(\om_i'm'\otimes f_im)\\
&=&q_{\mu\la}^{-1}q_{\al_i\la}(\sum_{\be\in Q^+}\Theta_{\be-\al_i})(f_im'\otimes m)
+q_{\mu\la}^{-1}(\sum_{\be\in Q^+}\Theta_\be)(m'\otimes f_im).
\end{eqnarray*}

So the proof is complete.
\end{proof}

\begin{coro} For any $M, M', M''\in Ob(\O_{int}^{\q})$, we have the following
quantum Yang-Baxter equation:
\begin{equation*}
R_{12}^{\q}R_{23}^{\q}R_{12}^{\q}=R_{23}^{\q}R_{12}^{\q}R_{23}^{\q}.
\end{equation*}
The category $\O_{int}^{\q}$ is a braided tensor category with the braiding $R_{M,M'}^{\q}$.
\end{coro}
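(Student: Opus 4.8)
The plan is to view $\O^{\q}_{int}$ as (a full subcategory of) the module category of the Hopf algebra $U_{\q}(\mg_A)$, so that it is automatically a monoidal category: the tensor product of two objects is again an object (weight spaces multiply and the support/integrability conditions are preserved), and the associativity and unit constraints are inherited from the coassociativity and counit of $U_{\q}(\mg_A)$. The genuinely new content is that the family $R^{\q}_{M,M'}$ furnishes a \emph{braiding} and that it obeys the Yang--Baxter relation. Here Theorem \ref{mat} already provides the essential input, namely that each $R^{\q}_{M,M'}$ is an isomorphism of $U_{\q}$-modules. From this I would first record naturality: for morphisms $f\colon M\to N$ and $g\colon M'\to N'$ in $\O^{\q}_{int}$ the expected square relating $R^{\q}_{M,M'}$ and $R^{\q}_{N,N'}$ commutes, which is immediate because $\Theta^{\q}$ acts through the module structure while $p_{M',M}$ and the flip $P$ are defined purely from the weight gradings, all of which are respected by module maps.

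The heart of the argument is the verification of the two hexagon identities
$$
R^{\q}_{M,\,N\ot P}=(\id_N\ot R^{\q}_{M,P})\circ(R^{\q}_{M,N}\ot\id_P),\qquad
R^{\q}_{M\ot N,\,P}=(R^{\q}_{M,P}\ot\id_N)\circ(\id_M\ot R^{\q}_{N,P}).
$$
For these I would feed in the comultiplication formulas for $\Theta$ recorded in Lemma \ref{key3}. Writing $\Theta_\be=\Theta^-_\be\ot\Theta^+_\be$, those formulas express $(\De\ot1)\Theta_\eta$ and $(1\ot\De)\Theta_\eta$ as sums over $\be+\ga=\eta$ of products of $\Theta_\be$ and $\Theta_\ga$ placed in the outer tensor slots, at the cost of inserting the grading elements $\om_\ga$ or $\om'_\be$ into the middle slot. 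These inserted elements act on weight spaces by the scalars $q_{\ga\mu}$ or $q^{-1}_{\mu\be}$, and the plan is to check that, using the bimultiplicativity of the pairing scalar ($q_{\mu,\nu+\nu'}=q_{\mu\nu}q_{\mu\nu'}$ and $q_{\mu+\mu',\nu}=q_{\mu\nu}q_{\mu'\nu}$ from the definition $q_{\mu\nu}=\prod_{i,j}q_{ij}^{\mu_i\nu_j}$), they combine exactly with the scalars $q^{-1}_{\cdot\,\cdot}$ produced by the operators $p$ to reproduce the two displayed composites.

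Once naturality and the two hexagon axioms are in place, $(\O^{\q}_{int},R^{\q})$ is a braided tensor category by definition, and the braid relation
$$
(R^{\q}_{M',M''}\ot\id_M)(\id_{M'}\ot R^{\q}_{M,M''})(R^{\q}_{M,M'}\ot\id_{M''})
=(\id_{M''}\ot R^{\q}_{M,M'})(R^{\q}_{M,M''}\ot\id_{M'})(\id_M\ot R^{\q}_{M',M''})
$$
on $M\ot M'\ot M''$ follows formally from naturality together with the hexagons (coherence for braided categories, Joyal--Street); rewritten with the $R$-factors placed in the appropriate tensor slots this is precisely the quantum Yang--Baxter equation $R^{\q}_{12}R^{\q}_{23}R^{\q}_{12}=R^{\q}_{23}R^{\q}_{12}R^{\q}_{23}$. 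I expect the main obstacle to be purely computational: faithfully tracking the interaction between the $\om$/$\om'$-insertions of Lemma \ref{key3} and the grading scalars carried by the maps $p$, and in particular keeping the $\om$ versus $\om'$ conventions and the order of the three tensor factors consistent throughout the hexagon bookkeeping, since it is exactly the cancellation of these scalars that encodes the whole structure.
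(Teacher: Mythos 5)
Your proposal is correct and takes essentially the route the paper intends: the corollary is stated there without proof, on the understanding that it follows from Theorem \ref{mat} (each $R^{\q}_{M,M'}$ is an isomorphism of $U_{\q}$-modules) combined with the coproduct formulas for $\Theta$ in Lemma \ref{key3} and the bimultiplicativity of $q_{\mu\nu}$ governing $p$, which is exactly the Lusztig-style hexagon verification you outline. Your further remark that the braid-relation form $R_{12}^{\q}R_{23}^{\q}R_{12}^{\q}=R_{23}^{\q}R_{12}^{\q}R_{23}^{\q}$ then follows formally from naturality together with the two hexagons is the standard completion of that argument, so nothing essential is missing.
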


\section{Quantum Shuffle Realization}
\subsection{$\tau$-sesquilinear form on $U_{\q}^+$}

\begin{pro}\label{kashi}
Let $\tau$ be an involution automorphism of $\bK$ such that
$\tau(q_{ij})=q_{ji},\ \forall\; i,$ $j\in I.$ Then there exists a
unique nondegenerate $\tau$-bilinear form $(\,,\,): U_{\q}^{+}\times
U_{\q}^{+}\to \bK$ such that, for any $i\in I$ and $x,y\in
U_{\q}^{+}$,
\begin{align}
(1,1)=1,\quad (xe_i,y)=(x,{\pa_i}y),\quad (e_ix,y)=(x,{_i\pa}y) \label{re1}.
\end{align}
\end{pro}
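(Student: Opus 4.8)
The plan is to transport the nondegenerate skew-Hopf pairing $\lan\,,\,\ran_{\q}$ of Theorem \ref{skew1} onto $U_{\q}^+\times U_{\q}^+$ by means of the $\tau$-linear automorphism $\Phi$ of Lemma \ref{auto}(1). Concretely, I would define
$$
(x,y):=\lan\Phi(x),\,y\ran_{\q},\qquad x,\,y\in U_{\q}^+.
$$
Since $\Phi$ sends $e_i\mapsto f_i$, it restricts to a $\tau$-semilinear bijection $(U_{\q}^+)_{\be}\to(U_{\q}^-)_{-\be}$ for each $\be\in Q^+$, so the right-hand side makes sense. From $\Phi(ax)=\tau(a)\Phi(x)$ and the $\bK$-bilinearity of $\lan\,,\,\ran_{\q}$ one gets $(ax,y)=\tau(a)(x,y)=(x,\tau(a)y)$, which is precisely the $\tau$-sesquilinearity required (in the sense defined earlier); and the vanishing of $\lan\,,\,\ran_{\q}$ between components of distinct degree shows that $(\,,\,)$ pairs $(U_{\q}^+)_{\be}$ only with itself.

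The defining relations are then an immediate consequence of Lemma \ref{sk}, which is really the point of the construction. As $\Phi$ is an algebra homomorphism, $\Phi(xe_i)=\Phi(x)f_i$ and $\Phi(e_ix)=f_i\Phi(x)$, so by Lemma \ref{sk}(iv) and (iii) respectively,
$$
(xe_i,y)=\lan\Phi(x)f_i,\,y\ran_{\q}=\lan\Phi(x),\,\pa_i(y)\ran_{\q}=(x,\pa_iy),
$$
$$
(e_ix,y)=\lan f_i\Phi(x),\,y\ran_{\q}=\lan\Phi(x),\,{}_i\pa(y)\ran_{\q}=(x,{}_i\pa\,y),
$$
while $(1,1)=\lan1,1\ran_{\q}=1$. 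Nondegeneracy is inherited from Proposition \ref{non}: if $(x,y)=0$ for all $y\in(U_{\q}^+)_{\be}$ then $\lan\Phi(x),y\ran_{\q}=0$ for all such $y$, whence $\Phi(x)=0$ and $x=0$; conversely, as $x$ runs over $(U_{\q}^+)_{\be}$ the element $\Phi(x)$ runs over all of $(U_{\q}^-)_{-\be}$, so $(x,y)=0$ for all $x$ forces $y=0$.

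For uniqueness I would show, by induction on the height of $\be$, that the three properties determine $(\,,\,)$ on each homogeneous block $(U_{\q}^+)_{\be}\times(U_{\q}^+)_{\be}$. For $\be\neq0$ write $x\in(U_{\q}^+)_{\be}$ as a sum $x=\sum_i e_ix_i'$ with $x_i'\in(U_{\q}^+)_{\be-\al_i}$ (possible since $U_{\q}^+$ is generated in degree one and positively graded); then $(x,y)=\sum_i(x_i',{}_i\pa\,y)$ reduces the computation to the strictly smaller block $(U_{\q}^+)_{\be-\al_i}\times(U_{\q}^+)_{\be-\al_i}$. Iterating peels $x$ down to the anchor $(1,\cdot)$, and on a matching degree this terminates at $(1,c\cdot1)=c$ with $c\in\bK$ a scalar produced purely by the skew-derivations; since the derivations do not depend on the form, $(\,,\,)$ is thereby pinned down.

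The step I expect to be the main obstacle is the control of this ``anchor'': one must ensure that the reduction never leaves a genuinely free value $(1,y)$ with $y$ of positive degree. This is handled by the grading (so that the form lives on equal-degree pairs, matching the explicitly constructed one) together with ${}_i\pa(1)=0$, which collapses every surviving evaluation to degree zero. The remaining verifications---the two peelings coming from Lemma \ref{sk}(iii)--(iv) being consistent, and the $\tau$-semilinearity---are routine bookkeeping rather than substantive difficulties.
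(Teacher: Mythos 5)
Your construction is exactly the paper's proof: it defines $(x,y):=\lan\Phi(x),y\ran_{\q}$ via the $\tau$-linear automorphism $\Phi$ of Lemma \ref{auto} and the skew Hopf pairing of Theorem \ref{skew1}, derives the relations (\ref{re1}) from Lemma \ref{sk}(iii)--(iv), and inherits nondegeneracy from Proposition \ref{non}. The uniqueness induction you spell out (peeling $e_i$'s off the first argument down to the anchor $(1,1)=1$) is precisely the detail the paper compresses into ``it is clear that $(\,,\,)$ is unique and nondegenerate,'' so there is no substantive divergence.
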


\begin{proof}
Let $(\,,\,): U_{\q}^{+}\times U_{\q}^{+}\lra \bK$ defined by
$$
(x,y):=\lan\Phi(x),y\ran_{\q},\quad\forall\ x,y\in U_{\q}^{+},
$$
where $\lan\,,\,\ran_{\q}$ is the skew Hopf pairing defined in
Proposition \ref{skew1} and $\Phi$ is the $\tau$-linear automorphism
of $U_{\q}(\mg)$ defined in Lemma \ref{auto}. Since $\Phi$ is
$\tau$-linear, $(\,,\,)$ is $\tau$-sesquilinear. By Lemma \ref{sk}
(iii) and (iv), the condition (\ref{re1}) is satisfied. It is clear
that $(\,,\,)$ is unique and nondegenerate.
\end{proof}
\begin{coro}\label{non1}
Let $x\in U_{\q}^{+}$.  If ${\pa_i}x=0$ for any $ i\in I $, then $x\in\bK$.
\end{coro}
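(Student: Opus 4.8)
The plan is to exploit the nondegenerate $\tau$-sesquilinear form $(\,,\,)$ on $U_{\q}^+$ produced in Proposition \ref{kashi}, together with its compatibility with the $Q^+$-grading, in order to convert the hypothesis $\pa_i x=0$ into the statement that $x$ is orthogonal to the whole augmentation ideal $\bigoplus_{\be\neq 0}(U_{\q}^+)_{\be}$, whence $x$ can only survive in degree $0$.

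First I would record that the form $(\,,\,)$ is homogeneous. Since $(u,v)=\lan\Phi(u),v\ran_{\q}$ and the $\tau$-linear automorphism $\Phi$ of Lemma \ref{auto} sends $(U_{\q}^+)_{\be}$ into $(U_{\q}^-)_{-\be}$, the grading orthogonality of the skew pairing $\lan\,,\,\ran_{\q}$ (components of distinct weight pair to zero, as noted before Theorem \ref{skew1}) gives $(u,v)=0$ whenever $u\in (U_{\q}^+)_{\be}$ and $v\in (U_{\q}^+)_{\ga}$ with $\be\neq\ga$. Because $\pa_i$ lowers degree by $\al_i$, the hypothesis $\pa_i x=0$ for all $i$ holds for each homogeneous component of $x$ separately, so it suffices to treat a homogeneous $x\in (U_{\q}^+)_{\be}$ and show that $\be\neq 0$ forces $x=0$.

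The key step is the right-multiplication adjunction of Proposition \ref{kashi}: for every $z\in U_{\q}^+$ and every $i\in I$ we have $(ze_i,x)=(z,\pa_i x)=0$. Since $U_{\q}^+$ is generated by the $e_i$, every homogeneous element of positive degree is a finite sum of products $ze_i$ (each monomial $e_{i_1}\cdots e_{i_n}$ with $n\ge 1$ ends in a generator $e_{i_n}$ and so factors as $(e_{i_1}\cdots e_{i_{n-1}})e_{i_n}$); hence $(w,x)=0$ for every $w$ of positive degree. For $w$ of degree $0$, i.e. $w\in\bK$, we have $(w,x)=0$ by the grading orthogonality just noted, as $\be\neq 0$. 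Thus $(w,x)=0$ for all $w\in U_{\q}^+$, and nondegeneracy of $(\,,\,)$ (Proposition \ref{kashi}) yields $x=0$. Consequently every component of $x$ lying in a degree $\be\neq 0$ vanishes, so $x\in (U_{\q}^+)_0=\bK$.

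There is no serious obstacle here; the two points that must be handled with care are the compatibility of $(\,,\,)$ with the grading (so that the degree-$0$ test vectors $w\in\bK$ contribute nothing once $\be\neq 0$), and the choice of the right-hand skew-derivation $\pa_i$ rather than ${_i\pa}$. It is precisely the right-multiplication identity $(ze_i,x)=(z,\pa_i x)$ that matches the hypothesis, and it succeeds exactly because monomials in the $e_i$ can be factored on the right by a single generator.
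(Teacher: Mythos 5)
Your proof is correct and is essentially the argument the paper intends: Corollary \ref{non1} is stated as an immediate consequence of Proposition \ref{kashi}, precisely because the adjunction $(ze_i,x)=(z,\pa_i x)$ makes $x$ orthogonal to the span of all monomials of positive degree, and nondegeneracy together with the grading then forces $x\in\bK$. Your explicit handling of the reduction to homogeneous components and of the degree-zero test vectors fills in the details the paper leaves unsaid, and does so correctly.
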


\subsection{Quantum shuffle algebra}
Let $(\F,\cdot)$ be the free associative $\bK$-algebra with $1$ with
generators $w_i\ (i\in I)$. For any $\nu=\sum_i\nu_i \al_i\in Q$, we
denote by $\F_{\nu}$ the $\bK$-subspace of $F$ spanned by the
monomials $w_{i_1}\cdots w_{i_r}$ such that for any $i \in I$, the
number of occurrences of $i$ in the sequence $i_{1},\cdots,i_{r}$ is
equal to $\nu_i$. Then $\F=\oplus_{\nu\in Q}{\F_{\nu}}$ with
$\F_{\nu}$ is a finite dimensional $\bK$-vector space. We have
${\F_{\mu}} {\F_{\nu}}\subset {\F_{\mu+\nu}},\ 1\in {\F_{0}}$ and
$w_i\in {\F_{\al_i}}$. An element $x$ of ${\F}$ is said to be
homogeneous if it belongs to $\F_{\nu}$ for some $\nu$. Let
$|x|=\nu.$ $w[i_{1},\cdots,i_{k}]:=w_{i_1}\cdots w_{i_k}$.

\begin{defi}
The quantum shuffle product $\star$ on $\F$ is defined by
\begin{eqnarray*}
&&1\star x=x\star1=x,\quad\text{for}\ \ x\in \F,\\
&&xw_{i}\star yw_{j} = (xw_{i}\star y)w_{j}+q_{\al_i,\nu+\al_j}
(x\star yw_{j} )w_{i},
\end{eqnarray*}
for $i,j\in I$ and $x\in \F,\ y\in \F_{\nu},\ \mu\in Q^+$.
\end{defi}

\begin{lem}\label{shuffle1} For any $i\neq j \in I$ and $m,l\in \bZ_{+}$, we have
\begin{eqnarray*}
&&w_i^{\star m}\star w_j\star w_i^{\star
l}\\
&=&\sum_{k=0}^{m}\sum_{t=0}^{l}q_{ij}^{k}q_{ji}^{l-t}q_{ii}^{k(l-t)}\bi{m}{k}_{q_{ii}}
\bi{l}{t}_{q_{ii}}(m-k+l-t)_{q_{ii}}!(k+t)_{q_{ii}}! w_i^{m-k+l-t}w_jw_i^{k+t}.
\end{eqnarray*}
\end{lem}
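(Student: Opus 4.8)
The plan is first to reduce everything to a shuffle of honest monomials. A one-line induction from the defining recursion shows that $w_i^{\star p}=(p)_{q_{ii}}!\,w_i^p$, where $w_i^p$ now denotes the concatenation monomial: inserting one more $w_i$ into $w_i^p$ spreads it over the $p+1$ available slots and produces exactly the factor $(p{+}1)_{q_{ii}}$. Since $\star$ is associative (the quantum shuffle algebra being an associative algebra), the left-hand side equals $(m)_{q_{ii}}!\,(l)_{q_{ii}}!\,\bigl(w_i^m\star w_j\star w_i^l\bigr)$, and it remains to shuffle the three genuine words $w_i^m$, $w_j$, $w_i^l$.

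For this I would use the combinatorial description of the $\q$-shuffle, itself obtained from the recursion by induction: for words $u_1,\dots,u_r$,
\[
u_1\star\cdots\star u_r=\sum_{\text{interleavings}}\Bigl(\prod_{\text{inversions}}q_{|x|,|y|}\Bigr)\,(\text{interleaved word}),
\]
where an \emph{inversion} is a pair consisting of a letter $x$ from $u_p$ and a letter $y$ from $u_{p'}$ with $p<p'$ such that $x$ lands to the right of $y$. Applying this to $w_i^m\star w_j\star w_i^l$, the single $w_j$ is the only landmark, so I group the interleavings according to how many of the $m$ left-hand $w_i$'s cross to the right of $w_j$ (say $k$ of them) and how many of the $l$ right-hand $w_i$'s stay to its left (say $l-t$ of them), giving the monomial $w_i^{m-k+l-t}w_jw_i^{k+t}$. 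Crossing $w_j$ yields $q_{ij}^{k}$ and $q_{ji}^{l-t}$; the $k$ right-moving $w_i$'s passing the $l-t$ left-staying $w_i$'s yields the cross-term $q_{ii}^{k(l-t)}$; and the two remaining all-$w_i$ sub-shuffles (to the left and to the right of $w_j$) contribute the Gaussian binomials $\binom{m-k+l-t}{l-t}_{q_{ii}}$ and $\binom{k+t}{t}_{q_{ii}}$. This grouping is the insertion identity that does the real work.

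Collecting the coefficient of $w_i^{m-k+l-t}w_jw_i^{k+t}$ and restoring the prefactor $(m)_{q_{ii}}!\,(l)_{q_{ii}}!$, the claim reduces to the elementary $q_{ii}$-factorial identity
\[
(m)_{q_{ii}}!\,(l)_{q_{ii}}!\,\binom{m-k+l-t}{l-t}_{q_{ii}}\binom{k+t}{t}_{q_{ii}}=\binom{m}{k}_{q_{ii}}\binom{l}{t}_{q_{ii}}\,(m-k+l-t)_{q_{ii}}!\,(k+t)_{q_{ii}}!,
\]
which is immediate upon writing each Gaussian binomial as a quotient of $q_{ii}$-factorials and cancelling. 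I expect the only genuine obstacle to be the middle step: establishing the inversion description (equivalently, carrying out the double induction on the number of inserted $w_i$'s) with the $q_{ii}$-powers tracked honestly, in particular isolating the cross-term $q_{ii}^{k(l-t)}$ produced by the right-moving $w_i$'s sweeping past the left-staying ones, while the two Gaussian binomials assemble from the quantum Pascal relation. Everything else is formal bookkeeping.
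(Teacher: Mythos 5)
Your proposal is correct, but it takes a genuinely different route from the paper's. The paper (Appendix B) proves the lemma by a direct induction on $l$: starting from the base case $w_i^{\star m}=(m)_{q_{ii}}!\,w_i^m$, it right-multiplies by one more $w_i$, expands $(w_i^{m-k+l-t}w_jw_i^{k+t})\star w_i$ via the defining recursion, re-indexes the sum, and merges the two resulting terms using the quantum Pascal identity $q_{ii}^{t+1}\binom{l}{t+1}_{q_{ii}}+\binom{l}{t}_{q_{ii}}=\binom{l+1}{t+1}_{q_{ii}}$ from the preliminary identities of Section 2.1. You instead factor out $(m)_{q_{ii}}!\,(l)_{q_{ii}}!$ by associativity and evaluate $w_i^m\star w_j\star w_i^l$ in closed form via the inversion description of the quantum shuffle, grouping interleavings by the two crossing numbers $k$ and $l-t$. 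I checked your bookkeeping and it is right: your inversion convention (a letter $x$ of an earlier factor landing to the right of a letter $y$ of a later factor contributes $q_{|x|,|y|}$) is exactly what the paper's recursion produces, since the factor $q_{\alpha_i,\nu+\alpha_j}$ appears precisely when the last letter of the first word jumps past the whole second word; the weights $q_{ij}^{k}$, $q_{ji}^{l-t}$, the cross-term $q_{ii}^{k(l-t)}$, and the block Gaussian binomials $\binom{m-k+l-t}{l-t}_{q_{ii}}$ and $\binom{k+t}{t}_{q_{ii}}$ account for all inversion pairs with no omissions or double counts, and the closing factorial identity is immediate. As for what each approach buys: the paper's induction is entirely self-contained, using only the recursion and one standard $q$-binomial identity, at the cost of a delicate re-indexing computation; your argument front-loads the work into the inversion formula, which (as you honestly flag) needs its own induction of essentially the same flavor, but once that standard fact is in place --- it is in effect the description of quantum shuffles used by Rosso and Leclerc --- this lemma, and the Serre-relation computation of Proposition 4.5 downstream, follow by transparent counting and are easier to generalize to arbitrary words.
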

\begin{proof}
See Appendix B.
\end{proof}

\begin{pro}\label{shuffle2} For any $i\neq j \in I$, we have
\begin{equation}
\sum_{k=0}^{1-a_{ij}} (-1)^{k} \binom{1-a_{ij}}{k}_{q_{ii}}
q_{ii}^{\frac{k(k-1)}{2}} q_{ij}^{k}w_{i}^{\star(1-a_{ij}-k)}\star
w_{j}\star w_{i}^{\star k}=0.
\end{equation}
\end{pro}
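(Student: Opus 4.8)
The plan is to substitute the closed form from Lemma~\ref{shuffle1} into the left-hand side and to check that the coefficient of each concatenation monomial $w_i^{N-s}w_jw_i^{s}$ vanishes, where I set $N:=1-a_{ij}$ and $v:=q_{ii}$, and $s$ runs over $0\le s\le N$. The one structural input that makes everything collapse is the defining relation $q_{ij}q_{ji}=q_{ii}^{a_{ij}}=v^{1-N}$, which I will use to eliminate the off-diagonal parameters in favour of powers of $v$.

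First I would apply Lemma~\ref{shuffle1} to each term $w_i^{\star(1-a_{ij}-k)}\star w_j\star w_i^{\star k}$ with $m=N-k$ and $l=k$. Writing the two inner summation indices of that lemma as $p$ and $t$, the monomial produced is $w_i^{N-p-t}w_jw_i^{p+t}$, so it is natural to organize the result by $s=p+t$. The coefficient of $w_i^{N-s}w_jw_i^{s}$ is then a double sum over the outer index $k$ and over $p$ (with $t=s-p$). Two reductions clean it up: using $q_{ij}q_{ji}=v^{1-N}$ the mixed monomial $q_{ij}^{\,k+p}q_{ji}^{\,k-t}$ becomes the single scalar $q_{ij}^{\,s}v^{a_{ij}(k-s+p)}$, so that after factoring out $q_{ij}^{\,s}(N-s)_v!\,(s)_v!$ (which is nonzero, as $q_{ii}$ is not a root of unity) only powers of $v$ remain; and the product of three Gaussian binomials $\binom{N}{k}_v\binom{k}{t}_v\binom{N-k}{p}_v$ collapses, by the triple-product identity $\binom{r}{k}_v\binom{k}{m}_v\binom{r-k}{n}_v=\binom{r-m-n}{k-m}_v\binom{m+n}{m}_v\binom{r}{m+n}_v$ recalled in Section~2 (applied with $r=N$, $m=t$, $n=p$), to $\binom{N-s}{k-t}_v\binom{s}{t}_v\binom{N}{s}_v$.

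After these steps the reduced coefficient is $\binom{N}{s}_v$ times $\sum_{p+t=s}\binom{s}{t}_v$ times an alternating sum over $k$. Substituting $k=t+r$ and using $\binom{t+r}{2}=\binom{t}{2}+tr+\binom{r}{2}$, the $k$-sum factors as $(-1)^t v^{\binom{t}{2}}\sum_{r=0}^{N-s}(-1)^r\binom{N-s}{r}_v v^{\binom{r}{2}}\bigl(v^{\,s+a_{ij}}\bigr)^{r}$; here the linear exponent equals $t+p+a_{ij}=s+a_{ij}$ and hence does not depend on the splitting $p+t=s$. The Gauss binomial summation formula (the last of the elementary $q$-identities recalled in Section~2), in the instance $\sum_{r=0}^{n}(-1)^r\binom{n}{r}_v v^{\binom{r}{2}}z^{r}=\prod_{r=0}^{n-1}(1-v^{r}z)$ with $n=N-s$ and $z=v^{\,s+a_{ij}}$, turns this into $\prod_{r=0}^{N-s-1}\bigl(1-v^{\,r+s+a_{ij}}\bigr)$. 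Since $a_{ij}=1-N$, the factor indexed by $r=N-s-1$ is $1-v^{0}=0$, so the product, and therefore the coefficient, vanishes for every $s$ with $0\le s\le N-1$. The remaining case $s=N$ falls out of the same formula: there the $r$-product is empty and the coefficient reduces to $\sum_{t=0}^{N}(-1)^t\binom{N}{t}_v v^{\binom{t}{2}}=\prod_{r=0}^{N-1}(1-v^{r})=0$.

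I expect the only real difficulty to be bookkeeping: keeping the three interlocking indices $k,p,t$ straight, and checking that the triple-product identity applies in exactly the form above so that, after the substitution $k=t+r$, what is left is a genuine Gauss binomial product in which the relation $N=1-a_{ij}$ forces one factor to be zero. No step beyond these recalled $q$-identities and Lemma~\ref{shuffle1} is needed.
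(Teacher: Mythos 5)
Your proposal is correct and is essentially the paper's own Appendix C computation: the same expansion via Lemma \ref{shuffle1}, the same collapse of the three Gaussian binomials by the triple-product identity recalled in Section 2, the same reindexing (your $(s,t,r)$ is the paper's $(t,n,u)$) under which the double sum splits into two independent Gauss sums, and the same final appeal to the Gauss summation formula together with $q_{ij}q_{ji}=q_{ii}^{a_{ij}}$. The only difference is bookkeeping: you fold $q_{ji}$ into powers of $q_{ii}$ at the outset and attribute the vanishing to the $r$-product for $s\le N-1$ and to the $t$-sum for $s=N$, whereas the paper keeps $q_{ij}q_{ji}$ intact, first uses the inner sum to force its $t=0$ (your $s=0$), and then kills that last term with the product $\prod_{n=0}^{-a_{ij}}\bigl(1-q_{ii}^{n}q_{ij}q_{ji}\bigr)$ --- two complementary readings of the identical factorization.
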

\begin{proof}
See Appendix C.
\end{proof}

\subsection{Embedding}
We will adopt a similar treatment due to Leclerc \cite{Le} used in
the one-parameter setting. For $w=w[i_{1},\cdots,i_{k}]$, let
${\pa_w}:= {\pa_{i_1}}\cdots {\pa_{i_k}}$ and $
\pa_{w}=\operatorname{Id}$ for $w=1$. Next we introduce a
$\bK$-linear map $\Ga: U_{\q}^{+}\lra (\F,\star)$ defined by
$$
\Ga(x)= \sum_{\substack{{w\in \F}\\{|w|=\mu}}}\pa_{w}(x)w,\quad\forall\ x\in  (U^+_{\q})_{\mu}.
$$
\begin{lem}
$\Ga$ is injective.
\end{lem}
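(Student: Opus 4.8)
The plan is to exploit that $\Ga$ is a graded map and to combine this with Corollary \ref{non1} in an induction on the height of the degree. Since $\Ga$ sends $(U_{\q}^+)_\mu$ into $\F_\mu$ and $U_{\q}^+=\bigoplus_{\mu\in Q^+}(U_{\q}^+)_\mu$, it suffices to prove that $\Ga$ is injective on each homogeneous component $(U_{\q}^+)_\mu$. For a homogeneous $x$ of degree $\mu$ and a word $w$ with $|w|=\mu$, the element $\pa_w(x)$ lands in $(U_{\q}^+)_0=\bK$, and the monomials $w$ with $|w|=\mu$ form a basis of $\F_\mu$. Hence $\Ga(x)=0$ is equivalent to the assertion that $\pa_w(x)=0$ for every word $w$ with $|w|=\mu$.

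The induction is on $\mathrm{ht}(\mu)=\sum_i\mu_i$. The base case $\mu=0$ is immediate, since $(U_{\q}^+)_0=\bK$ and $\Ga$ restricts there to the identity $c\mapsto c\cdot 1$. For the inductive step, I would assume $\mu\ne 0$ and that $\Ga$ is injective on every component of strictly smaller height. Let $x\in(U_{\q}^+)_\mu$ with $\Ga(x)=0$. The key observation is the compatibility of the iterated skew-derivations with concatenation of words: for $w=w[i_1,\dots,i_k]$ the very definition $\pa_w=\pa_{i_1}\cdots\pa_{i_k}$ gives $\pa_w\pa_i=\pa_{w[i_1,\dots,i_k,i]}$. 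Consequently, for each fixed $i\in I$,
$$
\Ga(\pa_i x)=\sum_{|w|=\mu-\al_i}\pa_w(\pa_i x)\,w
=\sum_{|w|=\mu-\al_i}\pa_{w[i_1,\dots,i_k,i]}(x)\,w,
$$
and every word occurring here has degree $\mu$. Since $\Ga(x)=0$ forces $\pa_{w'}(x)=0$ for all $w'$ with $|w'|=\mu$, every coefficient vanishes, so $\Ga(\pa_i x)=0$.

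Now $\pa_i x\in(U_{\q}^+)_{\mu-\al_i}$ has strictly smaller height, so the induction hypothesis yields $\pa_i x=0$, and this holds for every $i\in I$. At this point Corollary \ref{non1} applies and gives $x\in\bK$; since $x$ is homogeneous of degree $\mu\ne 0$, we conclude $x=0$, closing the induction. I would emphasize that the substantive input is entirely carried by Corollary \ref{non1} (equivalently, the non-degeneracy of the pairing in Proposition \ref{non}); the injectivity of $\Ga$ is then a formal consequence. The only genuine point to verify in the argument itself is the bookkeeping identity $\pa_w\pa_i=\pa_{w[i_1,\dots,i_k,i]}$ together with the degree count that turns $\Ga(x)=0$ into $\Ga(\pa_i x)=0$, and this is where I expect one should be most careful to keep the grading straight.
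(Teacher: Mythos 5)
Your proof is correct and takes essentially the same route as the paper's: reduce $\Ga(x)=0$ for homogeneous $x$ to the vanishing of $\pa_w(x)$ for all words $w$ of degree $\mu$, and then invoke Corollary \ref{non1} to force $x\in\bK$, hence $x=0$. The paper compresses this into one line; your induction on $\mathrm{ht}(\mu)$, using $\pa_w\pa_i=\pa_{w[i_1,\dots,i_k,i]}$ to get $\Ga(\pa_i x)=0$, merely supplies the bookkeeping the paper leaves implicit.
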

\begin{proof}
Assume $\Ga(x)=0$ for $x\in (U_{\q}^+)_{\mu}$. Then $\pa_{w}(x)=0$
for all $|w|=\mu$. By Corollary \ref{non1}, we have $x=0$, which
implies $\Phi$ is injective.
\end{proof}
Let $ D_i \in{\rm End}(F)\;(i\in I)$ defined as
$$
D_i(1)=0,\quad D_i(w[i_1,\cdots,i_k])=\delta_{i,i_k}w[i_1,\cdots,i_{k-1}].
$$
\begin{lem}\label{skew2}
Each  $D_i\ (i\in I)$ satisfies the relations
\begin{eqnarray*}
&&D_i(w_j)=\delta_{i,j},\\
&&D_i(x\star y)=q_{\al_i\nu}
D_i(x)\star y+x\star D_i(y)
\end{eqnarray*}
for any $y\in \F_{\mu}$ and $x\in \F$.
\end{lem}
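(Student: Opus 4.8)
The plan is to verify both identities directly from the definitions of $D_i$ and of the shuffle product $\star$, the second one reducing to a single application of the recursion for $\star$ after passing to monomials by bilinearity. The first relation is immediate: since $w_j=w[j]$, the defining formula gives $D_i(w[j])=\delta_{i,j}\,w[\,]=\delta_{i,j}$.

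For the skew-derivation property, the key elementary observation is that for every $z\in\F$ and every $c\in I$ one has $D_i(zw_c)=\delta_{i,c}\,z$; indeed every monomial occurring in $zw_c$ ends in the letter $c$, so this follows from the definition of $D_i$ on words together with $\bK$-linearity. By bilinearity of both sides it then suffices to prove $D_i(x\star y)=q_{\al_i,\mu}\,D_i(x)\star y+x\star D_i(y)$ (with $\mu=|y|$) when $x$ and $y$ are monomials. If $x=1$ or $y=1$ the identity is trivial, using $1\star z=z\star1=z$, $D_i(1)=0$, and $q_{\al_i,0}=1$. Otherwise write $x=x'w_a$ and $y=y'w_b$ with $y'\in\F_\nu$, so that $\mu=|y|=\nu+\al_b$, and invoke the recursion
\[
x\star y=(x\star y')\,w_b+q_{\al_a,\mu}\,(x'\star y)\,w_a.
\]
Applying $D_i$ and using $D_i(zw_c)=\delta_{i,c}z$ gives $D_i(x\star y)=\delta_{i,b}(x\star y')+q_{\al_a,\mu}\,\delta_{i,a}(x'\star y)$, while $D_i(x)=\delta_{i,a}x'$ and $D_i(y)=\delta_{i,b}y'$ yield $q_{\al_i,\mu}D_i(x)\star y+x\star D_i(y)=q_{\al_i,\mu}\,\delta_{i,a}(x'\star y)+\delta_{i,b}(x\star y')$. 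The two expressions agree term by term: the $\delta_{i,b}$ summands coincide, and on the $\delta_{i,a}$ summand the factor $\delta_{i,a}$ forces $i=a$, whence $q_{\al_a,\mu}=q_{\al_i,\mu}$.

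The only point requiring care — and the step most prone to error — is the weight carried by the deformation parameter: one must track that the scalar in the recursion is $q_{\al_a,|y|}$, with the \emph{full} weight $|y|=\mu$ of the right factor (not merely $\nu$), so that it matches the $q_{\al_i,\mu}$ demanded by the Leibniz rule exactly when the extracted last letter $i$ coincides with $a$. No induction on word length is needed, since the single recursion step already reduces $D_i(x\star y)$ to the $D_i$ of two elements each of which visibly ends in a fixed letter.
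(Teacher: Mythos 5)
Your proof is correct and takes essentially the same route as the paper's: both decompose $x=x'w_a$, $y=y'w_b$, apply the shuffle recursion once, and match terms via $D_i(zw_c)=\de_{i,c}z$, with no induction on word length. Your explicit tracking that the scalar carries the full weight $\mu=|y|$ of the right factor is precisely the reading used in the paper's proof (the $q_{\al_i\nu}$ in the lemma's statement is a typo for $q_{\al_i\mu}$), and your added base cases $x=1$ or $y=1$ are trivial details the paper omits.
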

\begin{proof}
Let $x=x'w_k, y=y'w_l$. Then
\begin{align*}
D_i(x\star y)=&\ D_i(x'w_k\star y'w_l)\\
=&\  D_i((x'w_k\star y')w_l+q_{\al_k\mu}(x'\star y'w_l)w_k)\\
=&\ \de_{i,l}(x'w_k\star y')+\de_{i,k}q_{\al_k\mu}(x'\star y'w_l)\\
=&\ \de_{i,k}q_{\al_i\mu}(x'\star y'w_l)+(x'w_k\star D_i(y))\\
=&\ q_{\al_i\mu}D_i(x)\star y+x \star D_i(y).
\end{align*}

This completes the proof.
\end{proof}
\begin{theo}\label{iso}
For any $x,y\in U_{\q}^{+}$,  we have $ \Ga(xy)=\Ga(x)\star \Ga(y).
$
\end{theo}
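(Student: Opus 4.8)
The plan is to realize $\Ga$ as an intertwiner between the skew-derivations $\pa_i$ on $U_{\q}^+$ and the letter-removal operators $D_i$ on $(\F,\star)$, and then to push multiplicativity through an induction on the $Q^+$-degree. The first step is to record the commutation
$$D_i\circ\Ga=\Ga\circ\pa_i,\qquad i\in I.$$
This is immediate from the definitions: for $x\in(U^+_{\q})_\mu$ one has $\Ga(x)=\sum_{|w|=\mu}\pa_w(x)\,w$, and applying $D_i$ keeps only those words $w=w[i_1,\dots,i_k]$ whose last letter is $i$; writing $w'=w[i_1,\dots,i_{k-1}]$ gives $\pa_w=\pa_{w'}\circ\pa_i$ and $D_i(w)=w'$, so that $D_i(\Ga(x))=\sum_{|w'|=\mu-\al_i}\pa_{w'}(\pa_i x)\,w'=\Ga(\pa_i x)$.

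Second, I would isolate the elementary separation principle that drives the induction: if $z\in\F_\mu$ with $\mu\neq 0$ satisfies $D_i(z)=0$ for all $i\in I$, then $z=0$. Indeed every monomial occurring in $z$ is a nonempty word ending in some letter $i$, and $D_i$ reads off exactly the coefficients of the words ending in $i$, so vanishing of all $D_i(z)$ forces every coefficient to vanish. This is the shuffle-side shadow of Corollary \ref{non1}.

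Now the proof proceeds by induction on $\mu=\be+\be'$, where $x\in(U^+_{\q})_\be$ and $y\in(U^+_{\q})_{\be'}$. The base case $\mu=0$ is trivial, since then $x,y\in\bK$ and $\Ga$ is the identity on scalars. For $\mu\neq 0$, set $z=\Ga(xy)-\Ga(x)\star\Ga(y)\in\F_\mu$; by the separation principle it suffices to show $D_i(z)=0$ for every $i$. On one hand, using $D_i\circ\Ga=\Ga\circ\pa_i$ and then the Leibniz rule of Lemma \ref{sk}(i),
\begin{equation*}
D_i(\Ga(xy))=\Ga(\pa_i(xy))=q_{\al_i\be'}\,\Ga(\pa_i(x)\,y)+\Ga(x\,\pa_i(y)).
\end{equation*}
Since $\pa_i(x)\,y$ and $x\,\pa_i(y)$ both have degree $\mu-\al_i<\mu$, the induction hypothesis together with the intertwining relation rewrites the right-hand side as $q_{\al_i\be'}\,D_i(\Ga(x))\star\Ga(y)+\Ga(x)\star D_i(\Ga(y))$. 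On the other hand, since $\Ga(y)\in\F_{\be'}$, Lemma \ref{skew2} gives exactly
$$D_i(\Ga(x)\star\Ga(y))=q_{\al_i\be'}\,D_i(\Ga(x))\star\Ga(y)+\Ga(x)\star D_i(\Ga(y)).$$
The two expressions coincide, whence $D_i(z)=0$ for all $i$ and therefore $z=0$.

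I expect the crux to be the exact matching of the scalar factors: the twist $q_{\al_i\be'}$ produced by the skew-Leibniz rule for $\pa_i$ on $U^+_{\q}$ (Lemma \ref{sk}(i)) must agree with the twist $q_{\al_i\,|\Ga(y)|}$ produced by the skew-Leibniz rule for $D_i$ on the shuffle algebra (Lemma \ref{skew2}), which is precisely why the shuffle product was normalized with the factor $q_{\al_i,\nu+\al_j}$. Checking that $|\Ga(y)|=\be'=|y|$ and that these two twists are literally identical is the only place where the particular multi-parameter conventions enter; once that is verified, the induction closes with no further computation.
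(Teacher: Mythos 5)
Your proof is correct, but it takes a genuinely different route from the paper's. The paper first uses Proposition \ref{shuffle2} (whose proof is the combinatorial computation of Lemma \ref{shuffle1} and Appendices B--C) to invoke the universal property of the presentation of $U_{\q}^{+}$: this produces an algebra morphism $\Ga'$ with $\Ga'(e_i)=w_i$, after which it establishes the intertwining relation $\Ga'\pa_i=D_i\Ga'$ and identifies the coefficient of each word $w$ in $\Ga'(x)$ with $\pa_w(x)$ by repeatedly applying the operators $D_i$, concluding $\Ga=\Ga'$. You go the other way: you prove the (easy) intertwining $D_i\circ\Ga=\Ga\circ\pa_i$ directly from the definition of $\Ga$, and then obtain multiplicativity by induction on degree, the engine being the exact match of the two twisted Leibniz rules (Lemma \ref{sk}(i) and Lemma \ref{skew2}) together with the elementary fact that $\bigcap_{i\in I}\Ker D_i\cap\F_{\mu}=0$ for $\mu\neq 0$; all of these steps check out, including the scalar bookkeeping, since $\Ga$ preserves the $Q^{+}$-grading so the two twists are literally $q_{\al_i\be'}$ on both sides. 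Your argument is more self-contained: it never uses the presentation of $U_{\q}^{+}$ or Proposition \ref{shuffle2}, and in fact recovers the latter as a corollary, since applying your theorem to the relation $u_{ij}^{+}=0$, with $\Ga(e_i)=\frac{q_{ii}}{1-q_{ii}}w_i$, yields the shuffle-side $\q$-Serre identity up to a nonzero scalar --- so the computations of Appendices B and C come for free. A further small advantage: under the paper's normalization $\pa_i=\frac{q_{ii}}{1-q_{ii}}\hat\pa_i$ one has $\pa_i(e_j)=\de_{ij}\frac{q_{ii}}{1-q_{ii}}$, so the paper's choice $\Ga'(e_i)=w_i$ makes $\Ga'\pa_i=D_i\Ga'$ fail on generators by exactly that scalar (harmless, and repaired by rescaling $\Ga'(e_i)$), whereas your induction is insensitive to the normalization because both sides of the identity scale identically. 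What the paper's route buys in exchange is the standard quantum-shuffle viewpoint --- an explicit homomorphism from $U_{\q}^{+}$ into $(\F,\star)$ defined on generators --- and the explicit Serre computation in the shuffle algebra, which has independent interest.
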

\begin{proof}
By Proposition \ref{shuffle2}, there exists a linear map $\Ga':U_{\q}^{+}\lra
(\F,\star)$ such that
$$
\Ga'(e_i)=w_i,\quad \Ga'(xy)=\Ga'(x)\star\Ga'(y)$$ for $i\in I$ and
$x,\; y\in U_{\q}^+$ . By Lemmas \ref{skew1} and \ref{skew2}, $
\Ga'{\pa_i}=D_i\Ga',\ \forall\ i\in I$. For $x\in U_{\mu}^+,\;
\mu\in Q^+$  and $w=w[i_i,\cdots,i_k]\in \F_{\mu}$, let
${\gamma_w}(x)$ be the coefficient of $w$ in $\Ga'(x).$ Then
$${\gamma_w}(x)=D_{i_1}\cdots D_{i_k}\Ga'(x)=\Ga\, {\pa_{i_1}}\cdots
{\pa_{i_k}}(x)={\pa_w}(x).$$ Hence $\Ga(x)=\Ga'(x).$
\end{proof}

%\newpage
\section{Appendix}

\subsection{Appendix A: The proof of Lemma \ref{primi}}
%\begin{proof}
\begin{equation*}
\begin{split}
\De(u_{ij}^+)&=\sum_{k=0}^{1-a_{ij}}(-1)^{k}
\binom{1-a_{ij}}{k}_{q_{ii}}q_{ii}^{\frac{k(k-1)}{2}}
q_{ij}^{k}\De(e_{i})^{1-a_{ij}-k}\De(e_{j})\De(e_{i})^{k}\\
&=\sum_{k=0}^{1-a_{ij}}\sum_{m=0}^{1-a_{ij}-k}\sum_{n=0}^{k}\binom{1-a_{ij}}{k}_{q_{ii}}
\binom{1-a_{ij}-k}{m}_{q_{ii}}\binom{k}{n}_{q_{ii}}(-1)^{k}q_{ii}^{\frac{k(k-1)}{2}}
q_{ij}^{k}\\
&\quad\times (e_i^{m}\om_i^{1-a_{ij}-k-m}\ot e_i^{1-a_{ij}-k-m})(e_j\ot1 +\om_j\ot e_j)(e_i^{n}\om_i^{k-n}\ot e_i^{k-n})\\
&=\sum_{k=0}^{1-a_{ij}}\sum_{m=0}^{1-a_{ij}-k}\sum_{n=0}^{k}\binom{1-a_{ij}}{k}_{q_{ii}}
\binom{1-a_{ij}-k}{m}_{q_{ii}}\binom{k}{n}_{q_{ii}}\\
&\quad\times (-1)^{k}q_{ii}^{\frac{k(k-1)}{2}}
q_{ij}^{k}q_{ij}^{1-a_{ij}-k-m}q_{ii}^{n(1-a_{ij}-k-m)}(e_i^{m}e_je_i^n\om_i^{1-a_{ij}-m-n}\ot e_i^{1-a_{ij}-m-n})\\
&\quad+\,\sum_{k=0}^{1-a_{ij}}\sum_{m=0}^{1-a_{ij}-k}\sum_{n=0}^{k}\binom{1-a_{ij}}{k}_{q_{ii}}
\binom{1-a_{ij}-k}{m}_{q_{ii}}\binom{k}{n}_{q_{ii}}\\
&\quad\times (-1)^{k}q_{ii}^{\frac{k(k-1)}{2}}
q_{ij}^{k}q_{ji}^{n}q_{ii}^{n(1-a_{ij}-k-m)}(e_i^{m+n}\om_i^{1-a_{ij}-m-n}\om_j\ot e_i^{1-a_{ij}-k-m}e_je_{i}^{k-n})\\
\end{split}
\end{equation*}
\begin{equation*}
\begin{split}&=\sum_{k=0}^{1-a_{ij}}\sum_{m=0}^{1-a_{ij}-k}\sum_{n=0}^{k}\binom{1-a_{ij}}{k}_{q_{ii}}
\binom{1-a_{ij}-k}{m}_{q_{ii}}\binom{k}{n}_{q_{ii}}\\
&\quad\times (-1)^{k}q_{ii}^{\frac{k(k-1)}{2}+n(1-a_{ij}-k-m)}
q_{ij}^{1-a_{ij}-m}(e_i^{m}e_je_i^n\om_i^{1-a_{ij}-m-n}\ot e_i^{1-a_{ij}-m-n})\\
&\quad+\,\sum_{k=0}^{1-a_{ij}}\sum_{m=0}^{1-a_{ij}-k}\sum_{n=0}^{k}\binom{1-a_{ij}}{k}_{q_{ii}}
\binom{1-a_{ij}-k}{m}_{q_{ii}}\binom{k}{n}_{q_{ii}}\\
&\quad\times (-1)^{k}q_{ii}^{\frac{k(k-1)}{2}+n(1-a_{ij}-k-m)}
q_{ij}^{k}q_{ji}^{n}(e_i^{m+n}\om_i^{1-a_{ij}-m-n}\om_j\ot
e_i^{1-a_{ij}-k-m}e_je_{i}^{k-n})\\
&=\sum_{k=0}^{1-a_{ij}}\sum_{m=0}^{1-a_{ij}-k}\sum_{n=0}^{k}\binom{1-a_{ij}-m-n}{k-n}_{q_{ii}}
\binom{1-a_{ij}}{m+n}_{q_{ii}}\binom{m+n}{n}_{q_{ii}}\\
&\quad\times (-1)^{k}q_{ii}^{\frac{k(k-1)}{2}+n(1-a_{ij}-k-m)}
q_{ij}^{1-a_{ij}-m}(e_i^{m}e_je_i^n\om_i^{1-a_{ij}-m-n}\ot
e_i^{1-a_{ij}-m-n})\\
&\quad+\,\sum_{k=0}^{1-a_{ij}}\sum_{m=0}^{1-a_{ij}-k}\sum_{n=0}^{k}\binom{1-a_{ij}-m-n}{k-n}_{q_{ii}}
\binom{1-a_{ij}}{m+n}_{q_{ii}}\binom{m+n}{n}_{q_{ii}}\\
&\quad\times (-1)^{k}q_{ii}^{\frac{k(k-1)}{2}+n(1-a_{ij}-k-m)}
q_{ij}^{k}q_{ji}^{n}(e_i^{m+n}\om_i^{1-a_{ij}-m-n}\om_j\ot
e_i^{1-a_{ij}-k-m}e_je_{i}^{k-n})
\\
&=\sum_{t=0}^{1-a_{ij}}\sum_{u=0}^{1-a_{ij}-t}\sum_{n=0}^{t}\binom{1-a_{ij}-t}{u}_{q_{ii}}
\binom{1-a_{ij}}{t}_{q_{ii}}\binom{t}{n}_{q_{ii}}\\
&\quad\times
(-1)^{u+n}q_{ii}^{\frac{(n+u)(n+u-1)}{2}+n(1-a_{ij}-u-t)}
q_{ij}^{1-a_{ij}-t+n}(e_i^{t-n}e_je_i^n\om_i^{1-a_{ij}-t}\ot e_i^{1-a_{ij}-t})\\
&\quad+\,\sum_{t=0}^{1-a_{ij}}\sum_{u=0}^{1-a_{ij}-t}\sum_{n=0}^{t}\binom{1-a_{ij}-t}{u}_{q_{ii}}
\binom{1-a_{ij}}{t}_{q_{ii}}\binom{t}{n}_{q_{ii}}\\
&\quad\times
(-1)^{u+n}q_{ii}^{\frac{(n+u)(n+u-1)}{2}+n(1-a_{ij}-u-t)}
q_{ij}^{u+n}q_{ji}^{n}(e_i^{t}\om_i^{1-a_{ij}-t}\om_j\ot e_i^{1-a_{ij}-u-t}e_je_{i}^{u})\\
&=\sum_{t=0}^{1-a_{ij}}q_{ij}^{1-a_{ij}-t}\sum_{n=0}^{t}(-1)^nq_{ii}^{\frac{n(n-1)}{2}+n(1-a_{ij}-t)}q_{ij}^{n}
\binom{1-a_{ij}}{t}_{q_{ii}}\binom{t}{n}_{q_{ii}}\\
&\quad\times
\sum_{u=0}^{1-a_{ij}-t}\binom{1-a_{ij}-t}{u}_{q_{ii}}(-1)^{u}q_{ii}^{\frac{u(u-1)}{2}}
(e_i^{t-n}e_je_i^n\om_i^{1-a_{ij}-t}\ot e_i^{1-a_{ij}-t})\\
&\quad+\,\sum_{t=0}^{1-a_{ij}}\sum_{n=0}^{t}(-1)^nq_{ii}^{\frac{n(n-1)}{2}+n(1-t)}
\binom{1-a_{ij}}{t}_{q_{ii}}\binom{t}{n}_{q_{ii}}\\
&\quad\times
\sum_{u=0}^{1-a_{ij}-t}\binom{1-a_{ij}-t}{u}_{q_{ii}}(-1)^{u}q_{ii}^{\frac{u(u-1)}{2}}
q_{ij}^{u}(e_i^{t}\om_i^{1-a_{ij}-t}\om_j\ot e_i^{1-a_{ij}-u-t}e_je_{i}^{u})\\
&=\sum_{t=0}^{1-a_{ij}}q_{ij}^{1-a_{ij}-t}\sum_{n=0}^{t}(-1)^nq_{ii}^{\frac{n(n-1)}{2}+n(1-a_{ij}-t)}q_{ij}^{n}
\binom{1-a_{ij}}{t}_{q_{ii}}\binom{t}{n}_{q_{ii}}\\
&\quad\times \de_{t,1-a_{ij}}
(e_i^{t-n}e_je_i^n\om_i^{1-a_{ij}-t}\ot e_i^{1-a_{ij}-t})\\
\end{split}
\end{equation*}
\begin{equation*}
\begin{split}&\quad+\,\sum_{t=0}^{1-a_{ij}}\sum_{n=0}^{t}(-1)^nq_{ii}^{\frac{n(n-1)}{2}+n(1-t)}
\binom{1-a_{ij}}{t}_{q_{ii}}\binom{t}{n}_{q_{ii}}\\
&\quad\times
\sum_{u=0}^{1-a_{ij}-t}\binom{1-a_{ij}-t}{u}_{q_{ii}}(-1)^{u}q_{ii}^{\frac{u(u-1)}{2}}
q_{ij}^{u}(e_i^{t}\om_i^{1-a_{ij}-t}\om_j\ot e_i^{1-a_{ij}-u-t}e_je_{i}^{u})\\
&=\sum_{n=0}^{1-a_{ij}}(-1)^nq_{ii}^{\frac{n(n-1)}{2}}q_{ij}^{n}
\binom{1-a_{ij}}{n}_{q_{ii}} (e_i^{1-a_{ij}-n}e_je_i^n\ot
1)+\de_{t,0}\sum_{t=0}^{1-a_{ij}}
\binom{1-a_{ij}}{t}_{q_{ii}}\\
&\quad\times\sum_{u=0}^{1-a_{ij}-t}\binom{1-a_{ij}-t}{u}_{q_{ii}}(-1)^{u}q_{ii}^{\frac{u(u-1)}{2}}
q_{ij}^{u}(e_i^{t}\om_i^{1-a_{ij}-t}\om_j\ot e_i^{1-a_{ij}-u-t}e_je_{i}^{u})\\
&=u_{ij}^+\ot
1+\sum_{u=0}^{1-a_{ij}}\binom{1-a_{ij}}{u}_{q_{ii}}(-1)^{u}q_{ii}^{\frac{u(u-1)}{2}}
q_{ij}^{u}(\om_i^{1-a_{ij}}\om_j\ot e_i^{1-a_{ij}-u}e_je_{i}^{u})\\
&=u_{ij}^+\ot 1+\om_i^{1-a_{ij}}\om_j\ot u_{ij}^+.
\end{split}
\end{equation*}
%\end{proof}

\subsection{Appendix B: The proof of Lemma \ref{shuffle1}}
%\begin{proof}
If $l=0$, we have
$$w_i^{\star m}=(m)_{q_{ii}}!w_i^m.$$
Assume that
Lemma 73 holds for $l$. Then for $l{+}1$, we have
\begin{equation*}
\begin{split}
&w_i^{\star m}\star w_j\star w_i^{\star
l+1}\\
&{=}\sum_{k=0}^{m}q_{ij}^{k}\bi{m}{k}_{q_{ii}}\left\{\sum_{t=0}^{l}q_{ji}^{l{-}t}q_{ii}^{k(l{-}t)}
\bi{l}{t}_{q_{ii}}(m{-}k{+}l{-}t)_{q_{ii}}!(k{+}t)_{q_{ii}}!(w_i^{m{-}k{+}l{-}t}w_jw_i^{k{+}t})\star
w_i\right\}.
\end{split}
\end{equation*}
Then
\begin{equation*}
\begin{split}
&\sum_{t=0}^{l}q_{ji}^{l-t}q_{ii}^{k(l-t)}
\bi{l}{t}_{q_{ii}}(m{-}k{+}l{-}t)_{q_{ii}}!(k{+}t)_{q_{ii}}!
(w_i^{m-k+l-t}w_jw_i^{k+t})\star w_i\\
&=\sum_{t=0}^{l}q_{ji}^{l-t}q_{ii}^{k(l-t)}
\bi{l}{t}_{q_{ii}}(m{-}k{+}l{-}t)_{q_{ii}}!(k{+}t)_{q_{ii}}!\times\\
&\quad\times
\big\{(m{-}k{+}l{-}t{+}1)_{q_{ii}}q_{ji}q_{ii}^{k+t}w_i^{m-k+l-t+1}w_jw_i^{k+t}
+(k{+}t{+}1)_{q_{ii}}w_i^{m-k+l-t}w_jw_i^{k+t+1})\big\}\\
&=\sum_{t=0}^{l}q_{ji}^{l-t+1}q_{ii}^{k(l-t)+k+t}\bi{l}{t}_{q_{ii}}
(m{-}k{+}l{-}t{+}1)_{q_{ii}}!(k{+}t)_{q_{ii}}!w_i^{m-k+l-t+1}w_jw_i^{k+t}\\
&\quad +\, \sum_{t=0}^{l}q_{ji}^{l-t}q_{ii}^{k(l-t)}
\bi{l}{t}_{q_{ii}}(m{-}k{+}l{-}t)_{q_{ii}}!(k{+}t{+}1)_{q_{ii}}!w_i^{m-k+l-t}w_jw_i^{k+t+1}
\\
\end{split}
\end{equation*}
\begin{equation*}
\begin{split}&=\sum_{t=-1}^{l}q_{ji}^{l-t}q_{ii}^{k(l-t)+t+1}\bi{l}
{t{+}1}_{q_{ii}}
(m{-}k{+}l{-}t)_{q_{ii}}!(k{+}t{+}1)_{q_{ii}}!w_i^{m-k+l-t}w_jw_i^{k+t+1}\\
&\quad +\, \sum_{t=-1}^{l}q_{ji}^{l-t}q_{ii}^{k(l-t)}
\bi{l}{t}_{q_{ii}}(m{-}k{+}l{-}t)_{q_{ii}}!(k{+}t{+}1)_{q_{ii}}!w_i^{m-k+l-t}w_jw_i^{k+t+1}\\
&=\sum_{t=-1}^{l}q_{ji}^{l-t}q_{ii}^{k(l-t)} \left\{
q_{ii}^{t+1}\bi{l}
{t{+}1}_{q_{ii}}+ \bi{l}{t}_{q_{ii}}  \right\}(m{-}k{+}l{-}t)_{q_{ii}}!(k{+}t{+}1)_{q_{ii}}!\times\\
&\quad \times w_i^{m-k+l-t}w_jw_i^{k+t+1}\\
&=\sum_{t=-1}^{l}q_{ji}^{l-t}q_{ii}^{k(l-t)} \bi{l{+}1}
{t{+}1}_{q_{ii}}(m{-}k{+}l{-}t)_{q_{ii}}!(k{+}t{+}1)_{q_{ii}}!w_i^{m-k+l-t}w_jw_i^{k+t+1}\\
&=\sum_{t=0}^{l+1}q_{ji}^{l-t+1}q_{ii}^{k(l{-}t{+}1)} \bi{l{+}1}
{t}_{q_{ii}}(m{-}k{+}l{-}t{+}1)_{q_{ii}}!(k{+}t)_{q_{ii}}!w_i^{m-k+l-t+1}w_jw_i^{k+t}.
\end{split}
\end{equation*}
%\end{proof}

\subsection{Appendix C: The proof of Proposition \ref{shuffle2}}
%\begin{proof}
By Lemma \ref{shuffle1}, we have
\begin{eqnarray*}
&&\sum_{k=0}^{1-a_{ij}} (-1)^{k} \binom{1-a_{ij}}{k}_{q_{ii}}
q_{ii}^{\frac{k(k-1)}{2}} q_{ij}^{k} w_{i}^{\star(1-a_{ij}-k)}\star
w_{j}\star w_{i}^{\star k}\\
&=&\sum_{k=0}^{1-a_{ij}}\sum_{m=0}^{1-a_{ij}-k}\sum_{n=0}^{k}
\binom{1{-}a_{ij}}{k}_{q_{ii}}
\binom{1{-}a_{ij}{-}k}{m}_{q_{ii}}\binom{k}{n}_{q_{ii}}
\\
&&\times(-1)^{k}q_{ii}^{\frac{k(k-1)}{2}+m(k-n)}
q_{ij}^{k+m}q_{ji}^{k-n}(1{-}a_{ij}{-}n{-}m)_{q_{ii}}!(m{+}n)_{q_{ii}}!
 w_i^{1-a_{ij}-n-m}w_jw_i^{m+n}\\
&=&\sum_{k=0}^{1-a_{ij}}\sum_{m=0}^{1-a_{ij}-k}\sum_{n=0}^{k}
\binom{1{-}a_{ij}{-}m{-}n}{k{-}n}_{q_{ii}}
\binom{1{-}a_{ij}}{m{+}n}_{q_{ii}}\binom{m{+}n}{m}_{q_{ii}}
\\
&&\times(-1)^{k}q_{ii}^{\frac{k(k-1)}{2}+m(k-n)} q_{ij}^{k+m}q_{ji}^{k-n}(1{-}a_{ij}{-}n{-}m)_{q_{ii}}!(m{+}n)_{q_{ii}}! w_i^{1-a_{ij}-n-m}w_jw_i^{m+n}\\
&=&\sum_{t=0}^{1-a_{ij}}\sum_{u=0}^{1-a_{ij}-t}\sum_{n=0}^{t}
\binom{1{-}a_{ij}{-}t}{u}_{q_{ii}}
\binom{1{-}a_{ij}}{t}_{q_{ii}}\binom{t}{n}_{q_{ii}}
\\
&&\times(-1)^{n+u}q_{ii}^{\frac{(n+u)(n+u-1)}{2}+(t-n)u} q_{ij}^{u+t}q_{ji}^{u}(1{-}a_{ij}{-}t)_{q_{ii}}!(t)_{q_{ii}}! w_i^{1-a_{ij}-t}w_jw_i^{t}\\
&=&\sum_{t=0}^{1-a_{ij}}\binom{1{-}a_{ij}}{t}_{q_{ii}}q_{ij}^{t}\sum_{n=0}^{t}(-1)^n
q_{ii}^{\frac{n(n-1)}{2}} \binom{t}{n}_{q_{ii}}
\\
&&\times \sum_{u=0}^{1-a_{ij}-t}\binom{1{-}a_{ij}{-}t}{u}_{q_{ii}}(-1)^{u}q_{ii}^{\frac{u(u-1)}{2}}(q_{ii}^{t}q_{ij}q_{ji})^{u}
(1{-}a_{ij}{-}t)_{q_{ii}}!(t)_{q_{ii}}! w_i^{1-a_{ij}-t}w_jw_i^{t}\\
\end{eqnarray*}
\begin{eqnarray*}
&=&\de_{t,0}\sum_{t=0}^{1-a_{ij}}\binom{1-a_{ij}}{t}_{q_{ii}}q_{ij}^{t}
\sum_{u=0}^{1-a_{ij}-t}\binom{1{-}a_{ij}{-}t}{u}_{q_{ii}}(-1)^{u}q_{ii}^{\frac{u(u-1)}{2}}(q_{ii}^{t}q_{ij}q_{ji})^{u}\\
&&\times(1{-}a_{ij}{-}t)_{q_{ii}}!(t)_{q_{ii}}! w_i^{1-a_{ij}-t}w_jw_i^{t}\\
&=&
\sum_{u=0}^{1-a_{ij}}\binom{1{-}a_{ij}}{u}_{q_{ii}}(-1)^{u}q_{ii}^{\frac{u(u-1)}{2}}(q_{ij}q_{ji})^{u}(1-a_{ij})_{q_{ii}}! w_i^{1-a_{ij}}w_j\\
&=&(1-a_{ij})_{q_{ii}}!w_i^{1-a_{ij}}w_j\prod_{n=0}^{-a_{ij}}(1-q_{ii}^{n}q_{ij}q_{ji})\\
&=&0.
\end{eqnarray*}
%\end{proof}

\bigskip
\bibliographystyle{amsalpha}

\end{document}